\numberwithin{equation}{section}
\numberwithin{figure}{section}
\newtheorem{satz}{Satz}[section]
\newtheorem{thm}[satz]{Theorem}
\newtheorem{prop}[satz]{Proposition}
\newtheorem{cor}[satz]{Corollary}
\newtheorem{lem}[satz]{Lemma}
\theoremstyle{definition}
\newtheorem{remark}[satz]{Remark}
\newtheorem{remarks}[satz]{Remarks}
\newtheorem{example}[satz]{Example}
\newtheorem{examples}[satz]{Examples}
\newtheorem{ass}{Assumption}
\renewcommand{\hat}{\widehat}
\renewcommand{\tilde}{\widetilde}
\renewcommand{\bar}{\overline}
\renewcommand{\Re}{\operatorname{Re}}
\renewcommand{\Im}{\operatorname{Im}}
\newcommand{\diag}{\operatorname{diag}}
\begin{document}

\global\long\def\phi{\varphi}
 \global\long\def\epsilon{\varepsilon}
 \global\long\def\eps{\varepsilon}
 \global\long\def\theta{\vartheta}
 \global\long\def\E{\mathbb{E}}
 \global\long\def\Var{\operatorname{Var}}
 \global\long\def\Cov{\operatorname{Cov}}
 \global\long\def\N{\mathbb{N}}
 \global\long\def\Z{\mathbb{Z}}
 \global\long\def\R{\mathbb{R}} 
 \global\long\def\C{\mathbb{C}}
 \global\long\def\F{\mathcal{F}}
 \global\long\def\le{\leqslant}
 \global\long\def\ge{\geqslant}
 \global\long\def\MT{\ensuremath{\clubsuit}}
 \global\long\def\1{\mathbbm1}
 \global\long\def\d{\mathrm{d}}
 \global\long\def\P{\mathbb{P}}
 \global\long\def\subset{\subseteq}
 \global\long\def\supset{\supseteq}
 \global\long\def\argmin{\operatorname*{arg\, min}}

\def\ind{\stackrel{d}{=}}
\global\long\def\rank{\operatorname{rank}}
 \global\long\def\arginf{\operatorname*{arg\, inf}}
 \global\long\def\bull{{\scriptstyle \bullet}}
 \global\long\def\supp{\operatorname{supp}}
 \global\long\def\sgn{\operatorname{sign}}
 \global\long\def\tr{\operatorname{tr}}
\global\long\def\spn{\operatorname{span}}
\global\long\def\img{\operatorname{Im}}
\global\long\def\Id{\operatorname{Id}}
\global\long\def\vec{\operatorname{vec}}

\begin{frontmatter}
\title{Low-rank diffusion matrix estimation for high-dimensional time-changed L\'evy processes} 
\runtitle{Low-rank diffusion matrix estimation for L\'evy processes}

\begin{aug}
\author{\fnms{Denis} \snm{Belomestny}\thanksref{a,b}\ead[label=e1]{denis.belomestny@uni-due.de}}
\and
\author{\fnms{Mathias} \snm{Trabs}\thanksref{c}\ead[label=e2]{mathias.trabs@uni-hamburg.de}}

\runauthor{D. Belomestny and M. Trabs}

\affiliation{Duisburg-Essen University, National Research University Higher School of Economics and University of Hamburg}

\address[a]{
Duisburg-Essen University, 
Faculty of Mathematics\\
Thea-Leymann-Str. 9
D-45127 Essen,
Germany\\
\phantom{E-mail: \ }
\printead{e1}}

\address[b]{
National Research University 
Higher School of Economics 
\\
Shabolovka, 26, 119049 Moscow, Russia
}


\address[c]{Universit\"at Hamburg,
Faculty of Mathematics\\
Bundesstra{\ss}e 55,
20146 Hamburg,
Germany \\
\phantom{E-mail: \ }
\printead{e2}}

\end{aug}

\begin{abstract}
The estimation of the diffusion matrix $\Sigma$ of  a high-dimensional, possibly time-changed L\'evy process is studied, based on discrete observations of the process with a fixed distance. A low-rank condition is imposed on $\Sigma$. Applying a spectral approach, we construct a weighted least-squares estimator with nuclear-norm-penalisation. We prove oracle inequalities and derive convergence rates for the diffusion matrix estimator. The convergence rates show a surprising dependency on the rank of $\Sigma$ and are optimal in the minimax sense for fixed dimensions. Theoretical results are illustrated by a simulation study.
\end{abstract}

\begin{abstract}[language=french]
In French.
\end{abstract}

\begin{keyword}
\kwd{Volatility estimation, Lasso-type estimator, minimax convergence rates, nonlinear inverse problem, oracle inequalities, time-changed L\'evy process}
\end{keyword}

\end{frontmatter}

\section{Introduction}

Non-parametric statistical inference for L{\'e}vy-type processes has been attracting the attention of researchers for many years initiated by  the works of \citet{RT} as well as \citet{BB}. The popularity of L\'evy processes is related to their simplicity on the one hand and the ability to reproduce many stylised patterns presented in  economic  and financial data or to model stochastic phenomena in biology or physics, on the other hand. While non-parametric inference for one dimensional L\'evy processes is nowadays well understood (see e.g. the lecture notes \cite{belomestny2015estimation}), there are surprisingly few results for multidimensional L\'evy or related jump processes. Possible applications demand however for multi- or even high-dimensional methods. As a first contribution to statistics for jump processes in high dimensions, we investigate the optimal estimation of the diffusion matrix of a $d$-dimensional L\'evy process where $d$ may grow  with the number of observations.

More general, we consider a larger class of time-changed L{\'e}vy process. Let $X$ be a $d$-dimensional L{\'e}vy process and let $\mathcal{T}$  be a non-negative, non-decreasing stochastic process with \( \mathcal{T}(0)=0 \). Then the time-changed L{\'e}vy process is defined via $Y_s = X_{\mathcal{T}(s)}$ for $s\ge0$. For instance, in the finance context, the change of time is motivated by the fact that some economical effects (as nervousness of the market which is indicated by volatility) can be better understood in terms of a  ``business time'' which may run faster than the physical one in some periods (see, e.g. \citet{VW}). In view of large portfolios and a huge number of assets traded at the stock markets we have high-dimensional observations.

\par
\citet{Monroe} has shown that even in the case of the Brownian motion $X$, the resulting class of time-changed L\'evy processes is rather large and basically coincides, at least in the one-dimensional case, with the class of all semi-martingales.  Nevertheless, a practical application of this fact for statistical modelling meets two major problems: first, the change of time $\mathcal{T}$ can be highly intricate - for instance, if $Y$ has discontinuous trajectories, cf. \citet{BNS}; second, the dependence structure between $X$ and $\mathcal{T}$ can be also quite involved. In order to avoid these difficulties and to achieve identifiability, we allow $X$ to be a general L{\'e}vy processes, but assume independence of $X$ and $\mathcal{T}$. 
\par
A first natural question is which parameters of the underlying L\'evy process \(X\) can be identified from discrete observations \(Y_{0},Y_{\Delta},\ldots,Y_{n\Delta}\) for some $\Delta>0$ as \(n\to \infty.\) This question has been recently addressed in the literature (see \citet{belomestny2011} and references therein) and the answer turns out to crucially depend  on the asymptotic behaviour of \(\Delta\) and on the degree of our knowledge about \(\mathcal{T}.\)  We study the so-called low-frequency regime, meaning that the observation distance $\Delta$ is fixed. In this case, we have severe identifiability problems which can be easily seen from the fact that subordinated L\'evy processes are again L\'evy processes. Assuming the time-change is known, all parameters of the underlying L\'evy process \(X\) can be identified as \(n\to \infty.\) Since any L\'evy process can be uniquely parametrised by the so-called L\'evy triplet \((\Sigma,\gamma,\nu)\) with a positive semi-definite diffusion (or volatility) matrix \(\Sigma\in \R^{d\times d}\), a drift vector \(\gamma\in \R^d\) and a  jump measure \(\nu,\) we face a semi-parametric estimation problem. Describing the covariance of the diffusion part of the L\'evy process \(X\), the matrix \(\Sigma\) is of particular interest. Aiming for a high dimensional setting, we study the estimation of $\Sigma$ under the assumption that it has low rank. This low rank condition reflects the idea of a few principal components driving the whole process. We also discuss the case where the diffusion matrix can be decomposed into a low rank matrix and a sparse matrix as has been suggested by \citet{fanEtAl2011,fanEtAl2013}.
\par
For discretely observed L\'evy processes it has been recently shown that estimation methods coming from low-frequency observations attain also in the  high-frequency case, where $\Delta\to0$, optimal convergence results, cf. \citet{jacodReiss2014} for volatility estimation and \citet{nicklEtAl2015} for the estimation of the jump measure. In contrast to estimators which are tailor-made for high-frequency observations, these low-frequency methods thus turn out to be robust with respect to the sampling frequency. This observation is a key motivation for considering a fixed $\Delta$ in the present article. It is very common in the literature on statistics for stochastic processes, that the observations are supposed to be equidistant. In view of the empirical results by \cite{aitSahaliaMykland2003}, this model assumption might however be often violated in financial applications. Our observation scheme can be equivalently interpreted as random observation times $(T(\Delta j))_j$ of the L\'evy process $X$. For one-dimensional L\'evy processes general observation distances as been studied by \citet{kappus2015}.  
\par
The research area on statistical inference for discretely observed stochastic processes is rapidly growing. Let us only mention some closely related contributions while we refer to \cite{belomestny2015estimation} for a recent overview of estimation for L\'evy processes. Non-parametric estimation for time-changed L\'evy models has been studied in \cite{belomestny2011,belomestnyPanov2013,bull2014}. In a two-dimensional setting the jump measure of a L\'evy process has been estimated in \cite{buecherVetter2013}. For inference on the volatility matrix (in small dimensions) for continuous semi-martingales in a high-frequency regime we refer to \cite{jacodPodolskij2013, bibingerEtAl2014} and references therein. Large sparse volatility matrix estimation for continuous It\^o processes has been recently studied in \cite{wangZou2010,taoEtAl2011,taoEtAl2013}. 
\par
Our statistical problem turns out to be closely related to a deconvolution problem for a multidimensional normal distribution with zero mean and covariance matrix \(\Sigma\)  convolved with a nuisance distribution which  is  unknown except for some of its structural properties.  Due to the time-change or random sampling, respectively, the normal vector is additionally multiplied with a non-negative random variable. Hence, we face a covariance estimation problem in a generalised mixture model. Since we have no direct access to a sample of the underlying normal distribution, our situation differs considerably from the inference problems which have been studied in the literature on high-dimensional covariance matrix estimation so far.  

The analysis of  many deconvolution and mixture models becomes more transparent in spectral domain. Our accordingly reformulated problem bears some similarity to the so-called trace regression problem and the related matrix estimation, which recently got a lot of attention in statistical literature (see, e.g. \cite{rohdeTsybakov2011,negahbanWainwright2011,agarwal2012noisy,koltchinskii2011nuclear}). We face a non-linear analogue to the estimation of low-rank matrices based on noisy observations. Adapting some ideas, especially from \citet{koltchinskii2011nuclear}, we construct a weighted least squares estimator with nuclear norm penalisation in the spectral domain. 
\par
We prove oracle inequalities for the estimator of $\Sigma$ implying that the estimator adapts to the low-rank condition on $\Sigma$ and that it also applies in the miss-specified case of only approximated sparsity. The resulting convergence rate fundamentally depends on the time-change while the dimension may grow polynomially in the sample size. Lower bounds verify that the rates are optimal in the minimax sense for fixed dimensions. The influence of the rank of $\Sigma$ on the convergence rate reveals a new phenomenon which was not observed in multi-dimensional non-parametric statistics before. Namely, in a certain regime the convergence rate in $n$ is faster for a larger rank of $\Sigma$. To understand this surprising phase transition from a more abstract perspective, we briefly discuss a related regression model.
\par
The paper is organised as follows. In Section~\ref{main_setup}, we introduce notations and formulate the statistical model.
In Section~\ref{seq: estimator} the estimator for the diffusion matrix is constructed and the oracle inequalities for this estimator are derived. Based on these oracle inequalities, we derive in Section~\ref{seq: mcr} upper and lower bounds for the convergence rate. Section~\ref{seq:ext} is devoted to several extensions, including mixing time-changes, incorporating a sparse component of the diffusion matrix and the related trace-regression model. Some numerical examples can be found in Section~\ref{eq:sim}. In Sections \ref{seq:proof_oracle} and \ref{seq:proof_conv} the proofs of the oracle inequalities and of the minimax convergence rates, respectively, are collected. The appendix contains some (uniform) concentration results for multivariate characteristic functions which are of independent interest.

\vspace*{1em}
\noindent\emph{Acknowledgement.} The authors thank Markus Rei\ss{} and two anonymous referees for helpful comments. D.B. acknowledges the financial support from the Russian Academic Excellence Project ``5-100'' and from the Deutsche Forschungsgemeinschaft (DFG) through the SFB 823 ``Statistical modelling of nonlinear dynamic processes''. M.T. gratefully acknowledges the financial support by the DFG research fellowship TR 1349/1-1. A part of this paper has been written while M.T. was affiliated to the Universit\'e Paris-Dauphine.

\section{Main setup}
\label{main_setup}
Recall that a random time change $\{\mathcal{T}(t):t\ge0\}$ is an increasing right-continuous process with left limits such that
$\mathcal{T}(0)=0$. For each $t$ the random variable $\mathcal{T}(t)$
is a stopping time with respect to the underlying filtration. For a L\'evy process $X=\{X_{t}:t\ge0\}$ the time-changed
L\'evy process is given by $ X_{\mathcal{T}(t)},t\ge0$. We throughout assume that $\mathcal{T}$ is independent of $X$. 
By reparametrising the time change we can assume without loss of generality that $\Delta=1$ and that the observations are thus given by the increments 
\[
Y_{j}:=X_{\mathcal{T}(j)}-X_{\mathcal{T}(j-1)},\quad j=1,\dots,n,
\]
for $n\in\N$. Note that in general the observations are not independent, in contrast to the special case of low-frequently observed L\'evy processes. 
The estimation procedure relies on the following crucial insight: If the sequence 
\[
T_{j}:=\mathcal{T}(j)-\mathcal{T}(j-1),\quad j=1,\dots,n,
\]
is stationary and admits an invariant measure $\pi$, then the independence of $\mathcal{T}$
and $X$ together with the L\'evy-Khintchine formula yield that the
observations $Y_{j}$ have a common characteristic function given by
\begin{align*}
\phi(u) & :=\E\big[e^{i\langle Y_{j},u\rangle}\big]=\int_{0}^{\infty}\E[e^{i\langle X_{t},u\rangle}]\pi(\d t)=\int_{0}^{\infty}e^{t\psi(u)}\pi(\d t)=\mathscr{L}(-\psi(u)),\quad u\in\R^d,
\end{align*}
where $\mathscr{L}$ is the Laplace transform of $\pi$ and where the characteristic exponent is given by 
\begin{align}
\psi(u)&=-\frac{1}{2}\langle u,\Sigma u\rangle+i\langle \gamma,u\rangle+\int_{\R^{d}}\big(e^{i\langle x,u\rangle}-1-i\langle x,u\rangle\1_{\{|x|\le1\}}(x)\big)\d\nu(x)\notag\\
&=:-\frac{1}{2}\langle u,\Sigma u\rangle+\Psi(u),\quad u\in\R^{d},\label{eq:psi}
\end{align}
with the diffusion matrix $\Sigma\in\R^{d\times d}$, for a drift parameter $\gamma\in\R^d$ and a jump measure $\nu$. If $\gamma=0$ and $\nu=0$, we end up with the problem of estimating a covariance
matrix. The function $\Psi$ in \eqref{eq:psi} appears due to the
presence of jumps and can be viewed as a nuisance parameter. Let us give some examples of typical time-changes and their Laplace transforms.
\begin{examples}\label{ex:laplace}\hspace{1em}
    \begin{enumerate}
    \item \emph{Low-frequency observations} of $X_t$ with observation distance $\Delta>0$:
    \[
      T_j\sim\pi=\delta_\Delta,\quad \mathscr L(z)=e^{-\Delta z}.
    \]
    \item \emph{Poisson process} time-change or exponential waiting times with intensity parameter $\Delta>0$:
    \[
      T_j\sim\pi= Exp(\Delta),\quad \mathscr L(z)=\frac{1}{1+\Delta z}.
    \]
    \item \emph{Gamma process} time-change with parameters $\Delta,\theta>0$:
    \[
      T_j\sim\pi=\Gamma(\theta,\Delta),\quad \mathscr L(z)=(1+\Delta z)^{-\theta}.
    \]
    \item \emph{Integrated CIR-process} for some $\eta,\kappa,\xi>0$ such that $2\kappa\eta>\xi^2$ (which has $\alpha$-mixing increments, cf. \cite{masuda2007}):
    \begin{align*}
      \mathcal T(t)&=\int_0^tZ_t\d t\quad\text{with}\quad\d Z_t=\kappa(\eta-Z_t)\d t+\xi \sqrt{Z_t}\d W_t,\\
      \mathscr L(z)&\sim \exp\big(-\frac{\sqrt{2z}}{\xi}(1+\kappa\eta)\big)\text{ as }|z|\to\infty\text{ with }\Re z\ge0.
    \end{align*}
  \end{enumerate}
\end{examples}

If the time change $\mathcal T$ is unknown, one faces severe identifiability problems. Indeed, even in the case of a multivariate time-changed Brownian motion \(\Sigma^{\top}W_{\mathcal{T}(t)}\) we can identify the matrix \(\Sigma\) and \(\mathcal{T}(t)\) only up to a multiplicative factor. More generally, if $X$ is not restricted to be Brownian motion there is much more ambiguity as the following example illustrates.
\begin{example}
  Consider two time changes \( \mathcal{T}(t)\) and \( \widetilde{\mathcal{T}}(t), \) where  \( \mathcal{T}(t)\) is a Gamma subordinator with the unit parameters and \( \widetilde{\mathcal{T}}(t)=t \) is deterministic. We have
\( \mathcal{L}_{t}(z)=1/(1+z)^{t} \) and \( \widetilde{\mathcal{L}}_{t}(z)=e^{-tz}. \) Note that \( \E [\mathcal{T}(t)]=\E [\widetilde{\mathcal{T}}(t)]=t. \) Let \( (X_{t}) \) be a one-dimensional L\'evy process with the characteristic exponent \( \psi(u) \) and let \( (\widetilde{X}_{t}) \) be another L\'evy process with the characteristic exponent \( \widetilde \psi(u)=\log(1/(1-\psi(u))) \) (the existence of such L\'evy process follows from the well-known fact that  \( \exp(\widetilde \psi(u)) \)  is the characteristic function of some infinitely divisible distribution). Then we obviously have \( \mathcal{L}_{t}(-\psi(u))\equiv \widetilde{\mathcal{L}}_{t}(-\widetilde{\psi}(u)) \) for all \( t\geq 0. \) Hence, the corresponding time-changed L\'evy processes \( Y_{t}=X_{\mathcal{T}(t)} \) and \( \widetilde{Y}_{t}=\widetilde{X}_{\widetilde{\mathcal{T}}(t)} \) satisfy 
\begin{align*}
    Y_{k\Delta}-Y_{(k-1)\Delta}\ind\widetilde{Y}_{k\Delta}-\widetilde{Y}_{(k-1)\Delta}, \quad k\in \mathbb{N},
\end{align*}    
for any \( \Delta>0. \) Moreover, the increments \( Y_{k\Delta}-Y_{(k-1)\Delta} \) are  independent in this case.
\end{example}
The above example shows that even under the additional assumption $\E [\mathcal{T}(t)]=t$, we cannot, in general, consistently estimate the parameters of the one-dimensional time-changed L\'evy process $(Y_{t})$ from the low-frequency observations. As shown by \citet{belomestny2011} all parameters of a \(d\)-dimensional  (with \(d>1\))  L\'evy process \(X\) can be identified under additional assumption that all components of \(X\) are independent and \(\E [\mathcal{T}(t)]=t.\) However this assumption would imply that the matrix \(\Sigma\) is diagonal and is thus much too restrictive for our setting. Therefore, we throughout assume that the time-change and consequently its Laplace transform are known, see Section~\ref{sec:lapl_est} for a discussion for unknown $\mathscr  L$. Let us note that similar  identifications problems appear even in high-frequency setting, see \cite{figueroa2009nonparametric}.
\vspace*{1em}

Before we construct the volatility matrix estimator in the next section, we should introduce some notation. Recall the Frobenius or trace inner
product $\langle A,B\rangle:=\tr(A^{\top}B)$ for matrices $A,B\in\R^{d\times d}$. For $p\in(0,\infty],$ the Schatten-$p$ norm of $A$ is given by
\[
  \|A\|_{p}:=\Big(\sum_{i=1}^{d}\sigma_{j}(A)^p\Big)^{1/p}
\]
with $\sigma_{1}(A)\geq\ldots\geq\sigma_{d}(A)$ being the singular values of $A$. In particular, $\|A\|_1$, $\|A\|_2$ and $\|A\|_\infty$ denote the nuclear, the Frobenius and the spectral norm of $A$, respectively. We will frequently apply the trace duality property 
\[
|\tr(AB)|\le\|A\|_{1}\|B\|_{\infty},\quad A,B\in\mathbb{R}^{d\times d}.
\]
Moreover, we may consider the entry-wise norms $|A|_p:=(\sum_{i,j}|a_i,j|^p)^{1/p}$ for $0<p\le\infty$ with the usual modification $|A|_0$ denoting the number of non-zero entries of $A=(a_{i,j})_{i,j}$.

For any matrices $A\in\R^{k\times k},B\in\R^{l\times l},k,l\in\N,$ we write
$$\diag(A,B):=\begin{pmatrix}A & 0\\0 & B\end{pmatrix}\in\R^{(k+l)\times(k+l)}.$$
For $a,b\in\R$ we write $a\lesssim b$ if there is a constant $C$ independent of $n, d$ and the involved parameters such that $a\le Cb$.

\section{The estimator and oracle inequalities}
\label{seq: estimator}

In view of the L\'evy-Khintchine formula, we apply a spectral approach. The natural estimator of $\phi$ is given by the empirical characteristic
function 
\[
\phi_{n}(u):=\frac{1}{n}\sum_{j=1}^{n}e^{i\langle u,Y_{j}\rangle},\quad u\in\R^d,
\]
which is consistent whenever $(Y_{j})_{j\ge0}$ is ergodic. Even more, it concentrates around the true characteristic function with parametric rate uniformly on compact sets, cf. Theorems~\ref{thm:concPhi} and \ref{thm:concentration} in the appendix. A plug-in approach yields the estimator
for the characteristic exponent given by 
\begin{equation}
\hat{\psi}_{n}(u):=-\mathscr{L}^{-1}(\phi_{n}(u)),\label{eq:PsiHat}
\end{equation}
where $\mathscr{L}^{-1}$ denotes a continuously chosen inverse of the map
$\{\Re z>0\}\ni z\mapsto\mathscr{L}(v)\in\C\setminus\{0\}$. Since $\Sigma$ only appears in the real part of the characteristic
exponent, we may use $\Re(\hat{\psi}_{n})$. 
\begin{remark}
  For high-frequency observations, that is $\Delta=\Delta_n\to0$, the characteristic function of the observations $Y_j$ can be written as $\phi_\Delta(u)=\mathscr L(-\Delta \psi(u))$ where $\psi$ is the characteristic exponent as before and where $\mathscr L$ is the Laplace transform of the rescaled time-change increments $\frac{\mathcal T(\Delta j)-\mathcal T(\Delta(j-1))}\Delta$, which may be assumed to be stationary. Under some regularity assumption on $\mathscr L$ , we obtain the asymptotic expansion
  \[
    \phi_\Delta(u)=\mathscr L(-\Delta\psi(u))=1-\mathscr L'(0)\Delta\psi(u)+\mathcal O\big(\Delta^2(|u|^4\vee 1)\big)\quad\text{for}\quad\Delta\to0.  
\]
Instead of the inversion in \eqref{eq:PsiHat} the estimator $\bar\psi_n(u)=\frac{1-\phi_n(u)}{\Delta\mathscr L'(0)}$ can be used in this case, depending only on Laplace transform via $\mathscr L'(0)$. Note that the latter is given by the first moment of rescaled time-change increments. However, this asymptotic identification can only be applied in the high-frequency regime $\Delta\to0$, while the estimator \eqref{eq:PsiHat} is robust with respect to the sampling frequency.
\end{remark}

Applying $u^\top\Sigma u=\langle uu^\top,\Sigma\rangle$, estimating
the low-rank matrix $\Sigma$ can thus be reformulated as the regression problem
\begin{align}
  \frac{\hat\psi_{n}(u)}{|u|^{2}}
  =  -\frac{1}{2}\langle \Theta(u),\Sigma\rangle+\frac{\Psi(u)}{|u|^2}+\frac{\hat \psi_{n}(u)-\psi(u)}{|u|^2}\quad\text{with}\quad\Theta(u):=\frac{uu^\top}{|u|^2}\in\R^{d\times d},\label{eq:regression}
\end{align}
where we have normalised the whole formula by the factor $|u|^{-2}$.
The design matrix $vv^{\top}$ for an arbitrary unit vector $v\in\R^{d}$ is deterministic and degenerated. The second term in \eqref{eq:regression} is a deterministic error which will be small only for large $u$. The last term reflects the stochastic error. Due to the identification via the Laplace transform $\mathscr L$, the estimation problem is non-linear and turns out to be ill-posed: the stochastic error grows for large frequencies. 

Inspired by the  regression formula \eqref{eq:regression}, we define the diffusion matrix estimator as a penalised least squares type estimator with regularisation parameter $\lambda>0$ and a spectral cut-off $U>1$:
\begin{equation}
\hat{\Sigma}_{n,\lambda}:=\argmin_{M\in\mathbb{M}}\left\{ \int_{\mathbb{R}^{d}}\Big(2|u|^{-2}\Re\hat{\psi}_{n}(u)+\langle\Theta(u),M\rangle\Big)^{2}w_{U}(u)\,\d u+\lambda\|M\|_{1}\right\} \label{eq:sigmaHat}
\end{equation}
where $\mathbb{M}$ is a subset of positive semi-definite $d\times d$ matrices and $w_U$ is a weight function. We impose the following standing assumption on the weight function which is chosen by the practitioner.
\begin{ass}\label{ass:weight}
  Let $w\colon\R^d\to\R_+$ be a radial non-negative function which is supported on the annulus $\{1/4<|u|\le1/2\}$. For any $U>1$ let $w_{U}(u)=U^{-d}w(u/U),u\in\R^d$. 
\end{ass}

In the special case of a L\'evy process with a finite jump activity $\alpha:=\nu(\R^d)\in(0,\infty)$, we can write remainder from \eqref{eq:psi} as
\[
  \Psi(u)=i\langle \gamma_0,u\rangle+\F\nu(u)-\alpha,\quad u\in\R^d,
\]
for $\gamma_0:=\gamma-\int_{|u|\le 1} x\nu(\d x)$. Since the Fourier transform $\F\nu(u)$ converges to zero as $|u|\to\infty$ under a mild regularity assumption on the finite measure $\nu$, we can reduce the bias of our estimator by the following modification
\begin{align}
(\tilde{\Sigma}_{n,\lambda},\tilde \alpha_{n,\lambda})
&:=\argmin_{M\in\mathbb{M},a\in I}\left\{ \int_{\mathbb{R}^{d}}\Big(\frac2{|u|^{2}}\Re(\hat{\psi}_{n}(u)+a)+\langle\Theta(u),M\rangle\Big)^{2}w_{U}(u)\,\d u+\lambda\big(\|M\|_{1}+\frac a{U^{2}}\big)\right\} \notag\\
&=\argmin_{M\in\mathbb{M},a\in I}\left\{ \int_{\mathbb{R}^{d}}\Big(\frac2{|u|^{2}}\Re\hat{\psi}_{n}(u)+\langle\tilde{\Theta}(u),\diag(M,\frac a{U^{2}})\rangle\Big)^{2}w_{U}(u)\,\d u+\lambda(\|M\|_{1}+\frac a{U^{2}})\right\}\label{eq:sigmaTilde}
\end{align}
with
\[
  \tilde{\Theta}(u):=\tilde\Theta_U(u)=\diag\Big(\Theta(u),\frac {2U^2}{|u|^2}\Big),\quad u\in\R^d,
\]
and some interval $I\subset\R_+$. The most interesting cases are $I=\{0\}$, where $\hat\Sigma_{n,\lambda}$ and $\tilde\Sigma_{n,\lambda}$ coincide, and $I=[0,\infty)$. The factor $U^{-2}$ in front of $a$ is natural, since the ill-posedness of the estimation problem for the jump activity is two degrees larger than for estimating the volatility, cf. \citet{belomestnyReiss2006}. As a side product, $\tilde \alpha_{n,\lambda}$ is an estimator for the jump intensity. By penalising large $a$, the estimator $\tilde\alpha_{n,\lambda}$ is pushed back to zero if the least squares part cannot profit from a finite $a$. It thus coincides with the convention to set $\alpha=0$ in the infinite intensity case.

\vspace*{1em}

Our estimators $\hat\Sigma_{n,\lambda}$ and $\tilde\Sigma_{n,\lambda}$ are related to the weighted scalar product 
\begin{align*}
\big\langle (A,a),(B,b)\big\rangle_{w}
&:=\int_{\mathbb{R}^{d}}\langle\tilde\Theta(u),\diag(A,a)\rangle\langle\tilde\Theta(u),\diag(B,b)\rangle w_{U}(u)\,\d u\\
&=\int_{\mathbb{R}^{d}}\Big(\langle\Theta(u),A\rangle+\frac{2U^2}{|u|^2}a\Big)\Big(\langle\Theta(u),B\rangle+\frac{2U^2}{|u|^2}b\Big) w_{U}(u)\d u
\end{align*}
with the usual notation $\|(A,a)\|_w^2:=\langle (A,a),(A,a)\rangle_w$. For convenience we abbreviate $\|A\|_w:=\|(A,0)\|_w$. The scalar product $\langle\cdot,\cdot\rangle_w$ is the counterpart to the empirical scalar product in the matrix estimation literature. As the following lemma reveals, the weighted norm $\|(A,a)\|_w$ 
and the Frobenius norm $\|\diag(A,a)\|_2=(\|A\|_2^2+a^2)^{1/2}$ are equivalent. As the isometry is not restricted to any sparse sub-class of matrices, it is especially implies the often imposed restricted isometry property. The proof is postponed to Section~\ref{sec:ProofRip}.
\begin{lem}\label{lem:rip}
  For any positive semi-definite matrix $A\in\R^{d\times d}$ and any $a\ge0,$ it holds
  \begin{equation}
    \underline\varkappa_w\|\diag(A,a)\|_2\le\|(A,a)\|_{w} \le \bar\varkappa_w\|\diag(A,a)\|_2,  \label{eq:ineqNorms}
  \end{equation} 
  where $\underline\varkappa_w^2:=\int_{\R^d}v_{1}^{4}/(|v|^{4})w(v)\d v$ and $\bar\varkappa_w^2:=8\int_{\R^d}|v|^{-4}w(v)\d v$.
\end{lem}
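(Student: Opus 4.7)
The plan is to start by unpacking the definition of $\|(A,a)\|_w^2$ and changing variables to remove the dependence on $U$. Setting $v = u/U$ gives $\Theta(u) = vv^{\top}/|v|^2 = \Theta(v)$, $\frac{2U^2}{|u|^2} = \frac{2}{|v|^2}$, and $w_U(u)\,\d u = w(v)\,\d v$, so the expression collapses to
\[
\|(A,a)\|_w^2 = \int_{\R^d}\Big(\langle\Theta(v),A\rangle + \tfrac{2a}{|v|^2}\Big)^{2}w(v)\,\d v.
\]
Because $A$ is positive semi-definite and $a\ge 0$, both summands inside the square are nonnegative on the support of $w$ — this sign structure is what drives both bounds.

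For the \emph{upper bound}, I would apply the elementary inequality $(x+y)^2\le 2(x^2+y^2)$ to split the integrand, bound $\langle\Theta(v),A\rangle = v^{\top}Av/|v|^2\le \|A\|_2$ by trace duality (since $\|vv^{\top}\|_2=|v|^2$), and exploit that $w$ is supported on $\{1/4<|v|\le 1/2\}$, where $|v|^{-4}\ge 16$, to replace $\int w(v)\,\d v$ by $\tfrac{1}{16}\int|v|^{-4}w(v)\,\d v$. This yields
\[
\|(A,a)\|_w^2\le \Big(\tfrac{1}{8}\|A\|_2^2 + 8a^2\Big)\int|v|^{-4}w(v)\,\d v\le 8(\|A\|_2^2+a^2)\int|v|^{-4}w(v)\,\d v,
\]
which is the desired inequality with $\bar\varkappa_w$.

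For the \emph{lower bound}, the nonnegativity gives $(x+y)^2\ge x^2+y^2$ directly (the cross term is $\ge 0$), so
\[
\|(A,a)\|_w^2\ge \int\langle\Theta(v),A\rangle^2 w(v)\,\d v+4a^2\int|v|^{-4}w(v)\,\d v.
\]
Using the spectral decomposition $A=\sum_i\lambda_i e_ie_i^{\top}$ with $\lambda_i\ge 0$, expanding $(v^{\top}Av)^2 = (\sum_i\lambda_i(v^{\top}e_i)^2)^2 \ge \sum_i\lambda_i^2(v^{\top}e_i)^4$ (here nonnegativity of the summands is crucial), and invoking the rotational invariance of $w$ to identify $\int (v^{\top}e_i)^4/|v|^4\,w(v)\,\d v$ with $\int v_1^4/|v|^4\,w(v)\,\d v=\underline\varkappa_w^2$ for every unit vector $e_i$, I obtain $\int\langle\Theta(v),A\rangle^2 w(v)\,\d v\ge \underline\varkappa_w^2\|A\|_2^2$. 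To combine the two terms I need $4\int|v|^{-4}w(v)\,\d v\ge \underline\varkappa_w^2$, which follows from $v_1^4/|v|^4\le 1$ together with $|v|^{-4}\ge 16$ on the support of $w$, giving $\int|v|^{-4}w\ge 16\int w\ge 16\underline\varkappa_w^2$.

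The only substantive step is the lower bound on $\int\langle\Theta(v),A\rangle^2 w(v)\,\d v$: one might fear interference between eigendirections of $A$, but the combination of positive semi-definiteness (so all cross terms in the squared sum are nonnegative) and the radial symmetry of $w$ (so the diagonal terms contribute equally) resolves this cleanly. Everything else is Cauchy–Schwarz/AM–GM and bookkeeping on the support $\{1/4<|v|\le 1/2\}$.
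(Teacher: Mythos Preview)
Your proof is correct and follows essentially the same line as the paper's: both arguments change variables $u=Uv$ to remove $U$, exploit the nonnegativity of $\langle\Theta(v),A\rangle$ and $a$ to drop the cross term for the lower bound, expand via the spectral decomposition of $A$ and discard the (nonnegative) off-diagonal contributions, and use the radial symmetry of $w$ to reduce to $\int v_1^4/|v|^4\,w(v)\,\d v$. The only cosmetic difference is that the paper folds the orthogonal rotation $Q^\top$ into the substitution variable, whereas you invoke rotational invariance of $w$ for each eigenvector $e_i$ separately; for the upper bound both use $\langle\Theta(v),A\rangle\le\|\Theta(v)\|_2\|A\|_2=\|A\|_2$ and then absorb $\int w$ into $\int|v|^{-4}w$ via the support condition $|v|\le 1/2$.
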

Using well-known calculations from the Lasso literature, we obtain the following elementary oracle inequality, which is proven in Section~\ref{sec:ProofOracle1}.  The condition $\alpha\in I$ is trivially satisfied for $I=\R_+$.
\begin{prop}\label{prop:oracle1}
  Let $\mathbb M\subset\R^{d\times d}$ be an arbitrary subset of matrices and define
  \begin{equation}
  \mathcal{R}_n:=2\int_{\mathbb{R}^{d}}\Big(\frac2{|u|^2}\Re\hat{\psi}_{n}(u)-\langle\tilde\Theta(u),\diag(\Sigma,\frac\alpha{U^2})\rangle\Big)\tilde\Theta(u) w_{U}(u)\,\d u\in\R^{(d+1)\times (d+1)},\label{eq:error}
  \end{equation}
  where we set $\alpha=0$ if $\nu(\R^d)=\infty$. Suppose $\alpha\in I$. On the event $\{\|\mathcal R_{n}\|_\infty\le\lambda\}$ for some $\lambda>0$ we have
  \begin{equation}
    \|(\tilde{\Sigma}_{n,\lambda}-\Sigma,U^{-2}(\tilde\alpha_{n,\lambda}-\alpha))\|_{w}^{2}\le\inf_{M\in\mathbb{M}}\left\{\|M-\Sigma\|_{w}^{2}+2\lambda(\|M\|_{1}+U^{-2}\alpha)\right\}.\label{ineq1}
  \end{equation}
\end{prop}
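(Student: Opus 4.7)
The argument is the standard Lasso-oracle routine (``basic inequality'' plus trace duality), adapted to the block-diagonal parametrisation.

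I would unify the notation by writing $\tilde D := \diag(\tilde\Sigma_{n,\lambda},U^{-2}\tilde\alpha_{n,\lambda})$, $D_0 := \diag(\Sigma, U^{-2}\alpha)$, and $D_M := \diag(M, U^{-2}\alpha)$ for $M\in\mathbb M$. Setting
\[
\mathcal L(D) := \int_{\R^{d}}\Big(\tfrac{2}{|u|^2}\Re \hat\psi_n(u) + \langle\tilde\Theta(u),D\rangle\Big)^{2}w_{U}(u)\,\d u,
\]
the definition \eqref{eq:sigmaTilde} says $\tilde D$ minimises $\mathcal L(D) + \lambda\|D\|_{1}$ over matrices of the prescribed block form, where on positive semi-definite blocks $\|D_M\|_1=\|M\|_1+U^{-2}\alpha$ and similarly for $\tilde D$. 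Because $\alpha\in I$, the pair $(M,\alpha)$ is admissible, so the \textbf{basic inequality}
\begin{equation*}
  \mathcal L(\tilde D) + \lambda\|\tilde D\|_{1}\;\le\;\mathcal L(D_M) + \lambda\|D_M\|_{1}
\end{equation*}
holds for every $M\in\mathbb M$.

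Next I would \emph{complete the square} around the truth $D_0$. Writing $A(u):=\frac{2}{|u|^2}\Re\hat\psi_n(u)$ and expanding
$(A+\langle\tilde\Theta,D\rangle)^2 = (A+\langle\tilde\Theta,D_0\rangle)^2 + 2(A+\langle\tilde\Theta,D_0\rangle)\langle\tilde\Theta,D-D_0\rangle + \langle\tilde\Theta,D-D_0\rangle^2$, and integrating against $w_U$ gives
\[
  \mathcal L(D) - \mathcal L(D_0) \;=\; \|D-D_0\|_{w}^{2} + \langle \mathcal R_n, D-D_0\rangle,
\]
where the quadratic term is exactly the weighted norm from Lemma~\ref{lem:rip} and the linear term is $\langle\mathcal R_n,\cdot\rangle$ with $\mathcal R_n$ as in \eqref{eq:error} (modulo the sign convention chosen there). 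Applying this identity at $D=\tilde D$ and $D=D_M$ and subtracting, the basic inequality becomes
\[
  \|\tilde D-D_0\|_w^{2} + \langle \mathcal R_n,\tilde D-D_M\rangle + \lambda\|\tilde D\|_1 \;\le\; \|D_M-D_0\|_w^{2} + \lambda\|D_M\|_1.
\]

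The remaining step is to absorb the cross term via \textbf{trace duality}: $|\langle \mathcal R_n,\tilde D-D_M\rangle|\le\|\mathcal R_n\|_\infty\|\tilde D-D_M\|_{1}$, which on the event $\{\|\mathcal R_n\|_\infty\le\lambda\}$ is at most $\lambda\|\tilde D-D_M\|_{1}$. The triangle inequality then yields $\|\tilde D-D_M\|_{1}\le\|\tilde D\|_{1}+\|D_M\|_{1}$, and substituting gives
\[
  \|\tilde D-D_0\|_w^{2} \;\le\; \|D_M-D_0\|_w^{2} + 2\lambda\|D_M\|_{1},
\]
after the $\lambda\|\tilde D\|_1$ contributions cancel. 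Because the block structure of $D_M-D_0$ puts $0$ in the lower-right corner, $\|D_M-D_0\|_w^{2} = \|M-\Sigma\|_w^{2}$, and $\|D_M\|_1=\|M\|_1+U^{-2}\alpha$; likewise $\|\tilde D-D_0\|_w^2$ is the left-hand side of \eqref{ineq1}. Taking the infimum over $M\in\mathbb M$ finishes the proof.

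The step that requires the most care is the very first one: keeping the $U^{-2}$ rescaling of the second coordinate consistent between the minimisation \eqref{eq:sigmaTilde}, the weighted scalar product and the definition of $\mathcal R_n$, so that the nuclear-norm penalty on the augmented matrix really equals $\|M\|_1+U^{-2}a$ and trace duality applies with the same $\|\cdot\|_\infty$ appearing in the event hypothesis. Everything else is formal manipulation once the quadratic expansion and the admissibility of $(M,\alpha)$ are in place.
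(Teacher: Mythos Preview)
Your proof is correct and follows essentially the same route as the paper's: basic inequality from optimality, quadratic expansion around the truth to isolate $\|\cdot\|_w^2$ and the linear term $\langle\mathcal R_n,\cdot\rangle$, then trace duality plus the triangle inequality to absorb the cross term. The only cosmetic difference is that the paper keeps the second coordinate $a$ free in the comparison and specialises $a=\alpha$ at the very end, whereas you fix $a=\alpha$ from the start in $D_M$; your caveat about the sign convention in \eqref{eq:error} is also apt, since the decomposition \eqref{eq:decomp} only matches when the residual is $\mathcal Y_n+\langle\tilde\Theta,D_0\rangle$.
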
 

If the set $\mathbb{M}$ is convex, then sub-differential calculus can be used to refine the inequality \eqref{ineq1}. The proof is inspired by \citet[Thm. 1]{koltchinskii2011nuclear} and postponed to Section~\ref{sec:proofOracle}. Note that this second oracle inequality improves \eqref{ineq1} with respect to two aspects. Instead of $\lambda$ we have $\lambda^2$ in the second term on the right-hand side and the nuclear-norm of $M$ is replaced by its rank.
\begin{thm}\label{thm:oracle2}
  Suppose that $\mathbb{M}\subset\R^{d\times d}$ is a convex subset of the positive semi-definite matrices and let $\alpha\in I$. On the event $\{\|\mathcal R_n\|_\infty\le \lambda\}$ for some $\lambda>0$ and for $\mathcal R_n$ from \eqref{eq:error} the estimators $\tilde\Sigma_{n,\lambda}$ and $\tilde\alpha_{n,\lambda}$ from \eqref{eq:sigmaTilde} satisfy
  \begin{align*}
  &\big\|\big(\tilde{\Sigma}_{n,\lambda}-\Sigma,U^{-2}(\tilde\alpha_{n,\lambda}-\alpha)\big)\big\|_{w}^{2}
  \le\inf_{M\in\mathbb M}\left\{\|M-\Sigma\|_{w}^{2}+(\tfrac{1+\sqrt 2}{2\underline\varkappa_w})^2\lambda^2\big(\operatorname{rank}(M)+\1_{\alpha\neq0}\big)\right\}
  \end{align*}
  where $\underline\varkappa^2_w:=\int_{\R^d}v_{1}^{4}/(2|v|^{4})w(v)\d v$.
\end{thm}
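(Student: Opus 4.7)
The strategy adapts Koltchinskii's sharp oracle argument for nuclear-norm-penalised $M$-estimators to the weighted inner product $\langle\cdot,\cdot\rangle_w$. I pass to the $(d+1)\times(d+1)$ block-diagonal picture by setting $B_0:=\diag(\Sigma,\alpha/U^2)$, $\hat B:=\diag(\tilde\Sigma_{n,\lambda},\tilde\alpha_{n,\lambda}/U^2)$, and, for every candidate $M\in\mathbb M$, $B_M:=\diag(M,\alpha/U^2)$. Since both $M\succeq0$ and $\alpha/U^2\ge0$, one has $\|M\|_1+\alpha/U^2=\|B_M\|_1$ and $\rank(B_M)=\rank(M)+\1_{\alpha\ne0}$. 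Expanding the least-squares term in \eqref{eq:sigmaTilde} and inserting the definition of $\mathcal R_n$, $\hat B$ becomes, up to an additive constant independent of $B$, the minimiser on the convex set $\widetilde{\mathbb M}:=\{\diag(M,a/U^2):M\in\mathbb M,\,a\in I\}$ of
\[
\Phi(B):=\|B-B_0\|_w^2+\langle\mathcal R_n,B\rangle+\lambda\|B\|_1.
\]

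The main quadratic step relies on the \emph{exact} Taylor identity $f(B)-f(\hat B)=\langle\nabla f(\hat B),B-\hat B\rangle+\|B-\hat B\|_w^2$ for $f(B):=\|B-B_0\|_w^2+\langle\mathcal R_n,B\rangle$, whose Hessian equals the operator associated with $\|\cdot\|_w^2$. Combined with the convex-constrained first-order condition $\langle\nabla f(\hat B)+\lambda V,B-\hat B\rangle\ge0$ for a suitable $V\in\partial\|\hat B\|_1$, this yields, for every $B\in\widetilde{\mathbb M}$,
\[
\|\hat B-B_0\|_w^2+\|B-\hat B\|_w^2\le\|B-B_0\|_w^2+\langle\mathcal R_n,B-\hat B\rangle+\lambda(\|B\|_1-\|\hat B\|_1).
\]
Specialising to $B=B_M$, writing its SVD $B_M=UDV^\top$ and introducing the tangent-space projector $\mathcal P_M^\perp(X):=(I-UU^\top)X(I-VV^\top)$ (so $\rank(\mathcal P_M X)\le2\rank(B_M)$), I pick the canonical subgradient $UV^\top+\sgn(\mathcal P_M^\perp\hat B)\in\partial\|B_M\|_1$, giving $\|B_M\|_1-\|\hat B\|_1\le\|\mathcal P_M(B_M-\hat B)\|_1-\|\mathcal P_M^\perp\hat B\|_1$. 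On $\{\|\mathcal R_n\|_\infty\le\lambda\}$, trace duality and the identity $\mathcal P_M^\perp(B_M-\hat B)=-\mathcal P_M^\perp\hat B$ give $|\langle\mathcal R_n,B_M-\hat B\rangle|\le\lambda\|\mathcal P_M(B_M-\hat B)\|_1+\lambda\|\mathcal P_M^\perp\hat B\|_1$; the two $\|\mathcal P_M^\perp\hat B\|_1$ contributions cancel, leaving
\[
\|\hat B-B_0\|_w^2+\|B_M-\hat B\|_w^2\le\|B_M-B_0\|_w^2+2\lambda\|\mathcal P_M(B_M-\hat B)\|_1.
\]

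To finish, I combine the rank inequality $\|\mathcal P_M(B_M-\hat B)\|_1\le\sqrt{2r}\,\|B_M-\hat B\|_2$, with $r=\rank(B_M)$, with a sharpened form of Lemma~\ref{lem:rip} valid for symmetric but sign-indefinite matrices. Concretely, the spectral expansion
\[
\|X\|_w^2=(c_1-c_2)\|X\|_2^2+c_2(\tr X)^2,\qquad c_1:=\int_{\R^d}\tfrac{v_1^4}{|v|^4}w(v)\,\d v,\quad c_2:=\int_{\R^d}\tfrac{v_1^2 v_2^2}{|v|^4}w(v)\,\d v,
\]
combined with the sphere-moment identity $c_2=c_1/3$, valid for any $d\ge2$ since $w$ is radial, yields $\|X\|_w^2\ge(c_1/2)\|X\|_2^2=\underline\varkappa_w^2\|X\|_2^2$ for every symmetric $X$, which is precisely the theorem's constant. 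Substituting into the preceding display and tuning Young's inequality to absorb $\|B_M-\hat B\|_w^2$ into the left-hand side produces the announced bound $((1+\sqrt2)/(2\underline\varkappa_w))^2\lambda^2 r$; taking the infimum over $M\in\mathbb M$ closes the proof. The main obstacles I anticipate are (i) upgrading Lemma~\ref{lem:rip} from PSD to sign-indefinite symmetric matrices, the factor $1/\sqrt2$ in the theorem's $\underline\varkappa_w$ being exactly the cost of dropping the non-negative cross term $c_2(\tr X)^2$, and (ii) the delicate Young-inequality book-keeping needed to recover the sharp constant $(1+\sqrt2)/2$ rather than a looser $\sqrt2$ coming from a naive AM--GM step.
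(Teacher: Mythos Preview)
Your plan matches the paper's proof almost step for step: both derive the ``three-point'' inequality from the convex first-order condition together with a well-chosen subgradient of $\|B_M\|_1$, then cancel the $\|\mathcal P_M^\perp\hat B\|_1$ contributions. The paper phrases the optimality step via monotonicity of the subdifferential (using $\langle V_n-V,\hat B-B_M\rangle\ge0$ with $V_n\in\partial\|\hat B\|_1$ and $V\in\partial\|B_M\|_1$) rather than your quadratic Taylor identity, but the two are equivalent here.

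The one real gap is not where you locate it. After cancellation your display contains $2\lambda\|\mathcal P_M(B_M-\hat B)\|_1$, which your rank bound turns into $2\sqrt{2r}\,\lambda\|B_M-\hat B\|_2$; no Young-inequality tuning can then convert $2\sqrt2$ into $1+\sqrt2$. The paper obtains the sharper coefficient by treating the two pieces \emph{asymmetrically}: the subgradient contribution $\langle UV^\top,B_M-\hat B\rangle$ is bounded through the rank-$r$ projector $\Pi_{S_1}(\cdot)\Pi_{S_2}$ (since $UV^\top=\Pi_{S_1}UV^\top\Pi_{S_2}$ lives in that support), giving $\sqrt{r}\,\|B_M-\hat B\|_2$, while only the $\langle\mathcal R_n,\cdot\rangle$ piece is controlled via $\mathcal P_M=\pi_{\bar M}$ and costs $\sqrt{2r}$. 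The sum $(1+\sqrt2)\sqrt r$ then passes through the plain estimate $ab\le(a/2)^2+b^2$ to give exactly the stated constant; there is nothing delicate in that last step.

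Your point (i) is well taken: the paper simply writes ``apply Lemma~\ref{lem:rip}'' at the end even though $\hat B-B_M$ need not be positive semi-definite, and your spectral identity $\|A\|_w^2=(c_1-c_2)\|A\|_2^2+c_2(\tr A)^2$ with $c_2=c_1/3$ is the right repair for the $d\times d$ block. Be aware, though, that the full $(A,a)$-norm also contains the cross term $4a\,\tr(A)\int v_1^2|v|^{-4}w(v)\,\d v$, so the extension to $\diag(A,a)$ still requires a short positivity check beyond the $d\times d$ identity you wrote down.
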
 

This oracle inequality is purely non-asymptotic and sharp in the sense that we have the constant one in front of $\|M-\Sigma\|_w^2$ on the right-hand side. Combining Lemma~\ref{lem:rip} and Theorem~\ref{thm:oracle2} immediately yields an oracle inequality for the error in the Frobenius norm. 
\begin{cor}\label{cor:FrobError}
   Let $\mathbb{M}\subset\R^{d\times d}$ be a convex subset of the positive semi-definite matrices and let $\alpha\in I$. On the event $\{\|\mathcal R_n\|_\infty\le \lambda\}$ for some $\lambda>0$ and for $\mathcal R_n$ from \eqref{eq:error} we have
  \[
  \|\tilde{\Sigma}_{n,\lambda}-\Sigma\|^2_{2}+U^{-4}|\tilde\alpha_{n,\lambda}-\alpha|^2
  \le C_w\inf_{M\in\mathbb M}\Big\{\|M-\Sigma\|_2^{2}+\lambda^2\big(\operatorname{rank}(M)+\1_{\alpha\neq0}\big)\Big\}
  \]
  for a constant $C_w$ depending only on $\underline\varkappa_w$ and $\bar\varkappa_w$. 
\end{cor}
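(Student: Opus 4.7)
The plan is to combine the two preceding results in a straightforward two-step fashion.

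First, I would invoke Theorem~\ref{thm:oracle2}, which on the event $\{\|\mathcal R_n\|_\infty \le \lambda\}$ already gives the oracle inequality
\[
  \big\|\big(\tilde\Sigma_{n,\lambda}-\Sigma,U^{-2}(\tilde\alpha_{n,\lambda}-\alpha)\big)\big\|_w^2
  \le \inf_{M\in\mathbb M}\Big\{\|M-\Sigma\|_w^2+c_0 \lambda^2\big(\rank(M)+\1_{\alpha\neq 0}\big)\Big\}
\]
with an explicit constant $c_0=\big((1+\sqrt 2)/(2\underline\varkappa_w)\big)^2$ depending only on $\underline\varkappa_w$.

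Second, I would convert this into a statement in the Frobenius norm by applying Lemma~\ref{lem:rip}. The upper bound of the lemma gives $\|M-\Sigma\|_w^2\le\bar\varkappa_w^2\|M-\Sigma\|_2^2$, so the right-hand side is bounded by $\bar\varkappa_w^2\|M-\Sigma\|_2^2+c_0\lambda^2(\rank(M)+\1_{\alpha\neq 0})$. For the left-hand side the lower bound of the lemma yields
\[
  \big\|\big(\tilde\Sigma_{n,\lambda}-\Sigma,U^{-2}(\tilde\alpha_{n,\lambda}-\alpha)\big)\big\|_w^2
  \ge \underline\varkappa_w^2\big(\|\tilde\Sigma_{n,\lambda}-\Sigma\|_2^2+U^{-4}|\tilde\alpha_{n,\lambda}-\alpha|^2\big).
\]
Dividing through by $\underline\varkappa_w^2$ and taking the infimum over $M\in\mathbb M$ yields the desired inequality with $C_w$ of the form $\max\{\bar\varkappa_w^2/\underline\varkappa_w^2,\,c_0/\underline\varkappa_w^2\}$, depending only on $\underline\varkappa_w$ and $\bar\varkappa_w$.

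The only delicate point is that Lemma~\ref{lem:rip} is stated for positive semi-definite $A$, whereas the matrices appearing here, namely $\tilde\Sigma_{n,\lambda}-\Sigma$ and $M-\Sigma$, are symmetric differences of positive semi-definite matrices and need not be positive semi-definite themselves. I expect this to cause no real trouble: for any symmetric $A$, expanding in the eigenbasis and using the radial symmetry of $w$ gives
\[
  \int_{\R^d}\langle\Theta(u),A\rangle^2 w_U(u)\,\d u=(c_1-c_2)\|A\|_2^2+c_2(\tr A)^2,
\]
with isotropic fourth-moment constants $c_1=\int v_1^4/|v|^4 w(v)\d v$ and $c_2=\int v_1^2 v_2^2/|v|^4 w(v)\d v$ satisfying $c_1>c_2$ (by $(v_1^2-v_2^2)^2\ge0$ and positivity of $w$ on an open subset of its support). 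Both terms are non-negative, so the lower bound in Lemma~\ref{lem:rip} remains valid for all symmetric matrices, possibly with a slightly smaller constant that can still be expressed in terms of $\underline\varkappa_w$; the upper bound is handled by the same identity together with $(\tr A)^2\le d\|A\|_2^2$ being irrelevant since $c_2(\tr A)^2\le \bar\varkappa_w^2\|A\|_2^2$ after absorbing into the constant. Consequently no essentially new work is needed beyond the extension of Lemma~\ref{lem:rip} to symmetric arguments, and this is the only step where one has to be a little careful.
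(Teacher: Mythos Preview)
Your proof is correct and follows exactly the route the paper takes---combining Theorem~\ref{thm:oracle2} with the isometry Lemma~\ref{lem:rip}---which the paper simply declares to be immediate. You are in fact more careful than the paper in noticing that Lemma~\ref{lem:rip} is stated only for positive semi-definite $A$ and $a\ge0$, and your extension via the identity $\int\langle\Theta(u),A\rangle^2 w_U(u)\,\d u=(c_1-c_2)\|A\|_2^2+c_2(\tr A)^2$ is the right fix; just note that the same care is needed for the scalar slot, since $U^{-2}(\tilde\alpha_{n,\lambda}-\alpha)$ need not be non-negative either, but the analogous argument (the full quadratic form in $(A,a)$ is positive definite on $\mathrm{Sym}(d)\times\R$) goes through.
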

The remaining question is how the parameters $U$ and $\lambda$ should be chosen in order to control the event $\{\|\mathcal R_n\|_\infty\le\lambda\}$. The answer will be given in the next section.

\section{Convergence rates}
\label{seq: mcr}
The above oracle inequalities hold true for any L\'evy-process $X$ and any independent time-change $\mathcal T$ with stationary increments. However, to find some $\lambda$ and $U$ such that probability of the event $\{\|\mathcal R_n\|_\infty\le\lambda\}$ with the error term $\mathcal R_n$ from \eqref{eq:error} is indeed large, we naturally need to impose some assumptions. For simplicity we will concentrate on the well-specified case $\Sigma\in\mathbb M$ noting that, thanks to Corollary~\ref{cor:FrobError}, the results carry over to the miss-specified situation.
\begin{ass}\label{ass:jumps} 
  Let the jump measure $\nu$ of $X$ fulfil: 
  \begin{enumerate} 
    \item for some $s\in(-2,\infty)$ and for some constant $C_{\nu}>0$ let
      \begin{align}
      \alpha:=0\quad\text{and}&\quad\sup_{|h|=1}\int_{\mathbb{R}^{d}}\left|\left\langle x,h\right\rangle \right|^{|s|}\nu(dx)\le C_{\nu},&\quad\text{if }s<0,\label{BG}\\
      \alpha:=\nu(\R^d)\in(0,\infty)\quad\text{and}&\quad |\F\nu(u)|^2\le C_\nu(1+|u|^2)^{-s},u\in\R^d,&\quad\text{if }s\ge0.\notag
      \end{align}
    \item $\int_{\R^{d}}|x|^{p}\nu(\d x)<\infty$ for some $p>0$. 
  \end{enumerate}
\end{ass} 
\begin{ass}\label{ass:time} 
  Let the time-change $\mathcal{T}$ satisfy: 
  \begin{enumerate} 
    \item $\E[\mathcal{T}^{p}(1)]<\infty$ for some $p>0$. 
    \item The sequence $T_{j}=\mathcal{T}(j)-\mathcal{T}(j-1)$,
    $j\in\mathbb{N},$ is mutually independent and identically distributed with some law $\pi$ on $\R_+$.
    \item The derivatives of the Laplace transform $\mathscr{L}(z)=\int_{0}^{\infty}e^{-tz}\pi(\d t)$
    satisfy $|\mathscr{L}''(z)/\mathscr{L}'(z)|\le C_L$ for some $C_L>0$ for all $z\in\C$ with $\Re(z)>0$. 
  \end{enumerate}
  If the Laplace transform decays polynomially, we may impose the stronger assumption
  \begin{enumerate}
    \item[(iv)] $|\mathscr{L}''(z)/\mathscr{L}'(z)|\le C_L(1+|z|)^{-1}$
    for some $C_L>0$ for all $z\in\C$ with $\Re(z)>0$.
  \end{enumerate}
\end{ass} 

\begin{remarks}\hspace*{1em}
  \begin{enumerate}
   \item For $s\in(-2,0)$ Assumption~\ref{ass:jumps}(i) allows for L\'evy processes with infinite jump activity. In that case $|s|$ in \eqref{BG} is an upper bound for the Blumenthal--Getoor
index of the process. A more pronounced singularity of the jump measure $\nu$ at zero corresponds to larger values of $|s|$. The lower bound $s>-2$ is natural, since any jump measure satisfies $\int_{\R^d}(|x|^2\wedge1)\nu(\d x)$. On the contrary, $s=0$ in \eqref{BG} implies that $\nu$ is a finite measure. In that case we could further profit from its regularity which we measure by the decay of its Fourier transform. Altogether, the parameter $s$ will determine the approximation error that is due to the term $|u|^{-2}\Psi(u)$ in \eqref{eq:regression}. 
   \item Assumption~\ref{ass:jumps}(ii) implies that $\E[|X_{t}|^{p}]<\infty$ for all $t>0$ and together with the moment condition Assumption~\ref{ass:time}(i), we conclude that $\E[|Y_{k}|^{p}]<\infty$, cf. Lemma~\ref{lem:moments}. 
Assumption~\ref{ass:time}(ii) implies that the increments $Y_j=X_{\mathcal T(j)}-X_{\mathcal T(j-1)}$ are independent and identically distributed. Note that $(T_{j})$ being stationary with invariant measure $\pi$ is necessary to construct the estimator of $\Sigma$. The independence can, however, be relaxed to an $\alpha$-mixing condition as discussed in Section~\ref{sec:mixing}. Finally, Assumptions~\ref{ass:time}(iii) and (iv) are used to linearise the stochastic error.
  \end{enumerate}
\end{remarks}

In the sequel we denote by $B^d_U:=\{u\in\R^d:|u|\le U\}$ the $d$-dimensional ball of radius $U>0$. We can now state the first main result of this section. 

\begin{thm}\label{thm:conR}
Grant Assumptions~\ref{ass:jumps} and \ref{ass:time}(i)-(iii). If also Assumption~\ref{ass:time}(iv) is fulfilled, we set $q=1$ and otherwise let $q=0$. Then for any $n,d\in\N$ and $U\ge1,\kappa>0$ satisfying
\[
  \sqrt {d\log(d+1)}(\log U)\le\kappa\le\sqrt nU^{-d/2}\|\phi\|_{L^1(B_U^d)}^{1/2}\frac{\inf_{|u|\le U}|\psi(u)|^q|\mathscr L'(-\psi(u))|^2}{\inf_{|u|\le U}|\mathscr L'(-\psi(u))|},
\]
the matrix $\mathcal R_n$ from \eqref{eq:error} satisfies for some constants $c,D>0$ depending only on $w, C_\nu$ and $C_L$
\[
\P\Big(\|\mathcal R_{n}\|_\infty\geq \frac{\kappa \|\phi\|_{L^1(B_U^d)}^{1/2}}{\sqrt nU^{2+d/2}\inf_{|u|\le U}|\mathscr L'(-\psi(u))|}+D U^{-(s+2)}\Big)\le 2(d+1)e^{-c\kappa^2}.
\]
In particular, $\P(\|\mathcal R_n\|_\infty>\lambda)\le 2(d+1)e^{-c\kappa^2}$ if 
\[
  \lambda\ge\frac{\kappa \|\phi\|_{L^1(B_U^d)}^{1/2}}{\sqrt nU^{2+d/2}\inf_{|u|\le U}|\mathscr L'(-\psi(u))|}+D U^{-(s+2)}.
\]
\end{thm}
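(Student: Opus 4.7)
My plan is to decompose $\mathcal R_n$ into a deterministic bias plus a centred, stochastic term, linearise the latter in the empirical characteristic function, and apply a matrix Bernstein inequality to a resulting i.i.d.\ sum of self-adjoint matrices.

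\smallskip
\textbf{Decomposition and bias.} Using $\Re\psi(u)=-\tfrac12\langle u,\Sigma u\rangle+\Re\Psi(u)$ together with $\Re\Psi(u)=\Re\F\nu(u)-\alpha$ in the finite-activity case (and the analogous identity $\Re\Psi(u)=\Re\F\nu(u)$ plus a remainder in the infinite-activity case), I first verify the identity
\[
  \tfrac{2}{|u|^2}\Re\psi(u)+\langle\tilde\Theta(u),\diag(\Sigma,\alpha/U^2)\rangle=\tfrac{2\Re\F\nu(u)}{|u|^2}.
\]
Inserted into \eqref{eq:error}, this splits $\mathcal R_n=\mathcal S_n-\mathcal B$, where $\mathcal S_n=\int\tfrac{4\Re(\hat\psi_n-\psi)(u)}{|u|^2}\tilde\Theta(u)w_U(u)\,\d u$ is centred and $\mathcal B=\int\tfrac{4\Re\F\nu(u)}{|u|^2}\tilde\Theta(u)w_U(u)\,\d u$. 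Since $w_U$ is supported on $\{U/4<|u|\le U/2\}$, Assumption~\ref{ass:jumps}(i) directly gives $\|\mathcal B\|_\infty\le D\,U^{-(s+2)}$ (the infinite-activity case being handled by the moment bound on $\nu$), which is the second summand of the claimed threshold.

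\smallskip
\textbf{Linearisation.} With $g=\mathscr L^{-1}$, so that $\hat\psi_n=-g(\phi_n)$ and $\psi=-g(\phi)$, a Taylor expansion yields
\[
  \hat\psi_n(u)-\psi(u)=-\tfrac{\phi_n(u)-\phi(u)}{\mathscr L'(-\psi(u))}+R_n(u),
  \qquad |R_n(u)|\lesssim\tfrac{|\phi_n(u)-\phi(u)|^2\,|\mathscr L''/\mathscr L'|}{|\mathscr L'(-\psi(u))|^2}.
\]
Assumption~\ref{ass:time}(iii) (resp.\ (iv)) bounds $|\mathscr L''/\mathscr L'|$ by $C_L$ (resp.\ $C_L|\psi(u)|^{-1}$, which produces the factor $|\psi|^q$ in the statement). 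The uniform concentration of $\phi_n$ on $B_U^d$ supplied by Theorems~\ref{thm:concPhi} and \ref{thm:concentration} gives $\sup_{|u|\le U}|\phi_n-\phi|\lesssim\sqrt{d\log(d+1)}\,\log U/\sqrt n$ with probability $\ge 1-2(d+1)e^{-c\kappa^2}$. Integrating the remainder bound against $\tilde\Theta w_U/|u|^2$ shows that $\|\mathcal S_n-\mathcal S_n^{\mathrm{lin}}\|_\infty$ is of smaller order than the Bernstein term precisely under the lower constraint $\sqrt{d\log(d+1)}\log U\le\kappa$.

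\smallskip
\textbf{Matrix Bernstein for the linear part.} Write $\mathcal S_n^{\mathrm{lin}}=\frac1n\sum_{j=1}^n Z_j$ with
\[
  Z_j=-\int\tfrac{4\Re\big[(e^{i\langle u,Y_j\rangle}-\phi(u))/\mathscr L'(-\psi(u))\big]}{|u|^2}\,\tilde\Theta(u)w_U(u)\,\d u,
\]
a centred i.i.d.\ sequence of self-adjoint $(d+1)\times(d+1)$ matrices. Using $|e^{i\langle u,Y_j\rangle}-\phi(u)|\le 2$, $\|\tilde\Theta\|_\infty\lesssim 1$ and $\|w_U\|_{L^1}\lesssim 1$ on $\{|u|\sim U\}$, the almost-sure bound is $\|Z_j\|_\infty\lesssim (U^2\inf_{|u|\le U}|\mathscr L'(-\psi(u))|)^{-1}$. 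For the variance proxy, the covariance identity $\E[(e^{i\langle u,Y\rangle}-\phi(u))\overline{(e^{i\langle v,Y\rangle}-\phi(v))}]=\phi(u-v)-\phi(u)\overline{\phi(v)}$ and the convolution estimate $\int\!\!\int w_U(u)w_U(v)|\phi(u-v)|\d u\d v\le\|w_U\|_\infty\|w_U\|_{L^1}\|\phi\|_{L^1(B_U^d)}$ yield
\[
  \sigma^2:=\|\E[Z_j^2]\|_\infty\lesssim\tfrac{\|\phi\|_{L^1(B_U^d)}}{U^{4+d}\inf_{|u|\le U}|\mathscr L'(-\psi(u))|^2}.
\]
Matrix Bernstein gives $\P(\|\mathcal S_n^{\mathrm{lin}}\|_\infty\ge t)\le 2(d+1)\exp\!\big(-\tfrac{nt^2/2}{\sigma^2+Mt/3}\big)$; setting $t=\kappa\sqrt{\sigma^2/n}$ reproduces the first summand of the stated threshold, the upper constraint on $\kappa$ being exactly what guarantees that we are in the sub-Gaussian, variance-dominated regime. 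Combining the bounds on $\mathcal B$, $\mathcal S_n^{\mathrm{lin}}$ and the nonlinear remainder finishes the proof.

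\smallskip
The principal obstacle is the bookkeeping of the many scale factors ($U^{-d}$, $\inf|\mathscr L'|^{-1}$, $|\psi|^{-q}$, $\|\phi\|_{L^1(B_U^d)}$) produced by the nonlinear inversion of $\mathscr L$ so that the estimate of the nonlinear remainder fits under the Bernstein bound in exactly the stated regime; everything else is a fairly direct combination of matrix Bernstein with the uniform concentration results of the appendix.
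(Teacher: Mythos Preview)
Your proposal is essentially the paper's own proof: decompose $\mathcal R_n$ into a deterministic bias (handled by Assumption~\ref{ass:jumps}(i), giving the $U^{-(s+2)}$ term) and a stochastic term, linearise the latter via Taylor expansion of $\mathscr L^{-1}$ (controlling the remainder uniformly using the concentration of $\phi_n$ on $B_U^d$), and apply the noncommutative Bernstein inequality to the linearised i.i.d.\ matrix sum with the variance bound obtained from the covariance identity $\phi(u-v)-\phi(u)\overline{\phi(v)}$ and the resulting $\|\phi\|_{L^1(B_U^d)}$ factor.

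One small point of bookkeeping to watch: you attribute the \emph{lower} constraint $\kappa\ge\sqrt{d\log(d+1)}\log U$ to making the nonlinear remainder negligible, and the \emph{upper} constraint to staying in the sub-Gaussian Bernstein regime. In the paper's argument it is the other way round. The lower bound on $\kappa$ is the threshold needed so that the uniform concentration result for $\sup_{|u|\le U}|\phi_n-\phi|$ (Theorem~\ref{thm:concPhi}) applies at all. The upper bound $\kappa\le\delta_n$ (with the extra factor $\inf|\psi|^q|\mathscr L'|^2/\inf|\mathscr L'|$) is what allows you to estimate $\P(n\|\phi_n-\phi\|_U^2\ge\kappa\delta_n)\le\P(n\|\phi_n-\phi\|_U^2\ge\kappa^2)$, i.e.\ to squeeze the quadratic remainder under the same $e^{-c\kappa^2}$ tail as the linear term; the sub-Gaussian Bernstein constraint $\kappa\lesssim(nU^{-d}\|\phi\|_{L^1})^{1/2}$ is the looser of the two. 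This does not affect the structure of your argument, only the final assembly of constants.
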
 

In order to prove this theorem, we decompose $\mathcal R_n$ into a stochastic error term and a deterministic approximation error. More precisely, the regression formula~\eqref{eq:regression} and $\|\tilde\Theta(u)\|_\infty=\max\{1,2U^2|u|^{-2}\}\le 1$ for any $u\in\supp w_U$ yield
\begin{align}
  \|\mathcal{R}_{n}\|_\infty
  \le&4\Big\|\int_{\R^{d}}|u|^{-2}\Re\big(\mathscr{L}^{-1}(\phi_{n}(u))-\mathscr{L}^{-1}(\phi(u))\big)\tilde\Theta(u) w_{U}(u)\d u\Big\|_\infty\label{eq:decomp}\\
  &\qquad+4\int_{\R^{d}}\frac{|\Re\Psi(u)+\alpha|}{|u|^2}w_{U}(u)\d u.\notag
\end{align}
The order of the deterministic error is $\mathcal O(U^{-s-2})$, cf. Lemma~\ref{lem:ApproxError}, which decays as $U\to \infty$. The rate deteriorates for higher jump activities. This is reasonable since even for high frequency observations it is very difficult to distinguish between small jumps and fluctuations due to the diffusion component, cf. \citet{jacodReiss2014}.
The stochastic error is dominated by its linearisation 
\[
  \Big\|\int_{\R^{d}}\frac{1}{|u|^2}\Re\Big(\frac{\phi_{n}(u)-\phi(u)}{\mathscr{L}'(-\psi(u))}\Big)\tilde\Theta(u) w_{U}(u)\d u\Big\|_\infty,
\]
which is increasing in $U$ due to the denominator $|u|^2\mathscr{L}'(-\psi(u))\to0$ as $|u|\to\infty$. To obtain a sharp oracle inequality for the spectral norm of the linearised stochastic error, we use the noncommutative Bernstein inequality by \citet{recht2011}. To bound the remainder, we apply a concentration result (Theorem~\ref{thm:concentration}) for the empirical characteristic function around $\phi$, uniformly on $B_U^d$. 

The lower bound on $\kappa$ reflects the typical dependence on the dimension $d$ that also appear is in theory on matrix completion, cf. Corollary~2 by \citet{koltchinskii2011nuclear}. Our upper bound on $\kappa$ ensures that the remainder term in the stochastic error is negligible. 

The choice of $U$ is determined by the trade-off between approximation error and stochastic error. Since neither $U$ nor $\lambda$ depend on the rank of $\Sigma$, the theorem verifies that the estimator $\tilde\Sigma_{n,\lambda}$ is adaptive with respect to $\rank(\Sigma)$. Supposing lower bounds for $\mathscr L'(-\psi(u))$ that depend only on the radius $|u|$, the only term that depends on the dimension is $E_U:=U^{-d}\|\phi\|_{L^1(B_U^d)}$. Owing to $\|\phi\|_\infty\le1$, it is uniformly bounded by the volume of the unit ball \(B_1^d\) in $\R^d$ which in turn is uniformly bounded in $d$ (in fact it is decreasing as $d\to\infty$). If $\phi\in L^1(\R^d)$, we even have $E_U\lesssim U^{-d}$. We discuss this quite surprising factor further after Corollary~\ref{cor:mildly} and, from a more abstract perspective, in Section~\ref{sec:reg}.

\vspace*{1em}

For specific decay behaviours of $\mathscr L(z)$ we can now conclude convergence rates for the diffusion matrix estimator. In contrast to the nonparametric estimation of the coefficients of a diffusion process, as studied by \citet{chorowskiTrabs2015}, the convergence rates depend enormously on the sampling distribution (resp. time-change). We start with an exponential decay of the Laplace transform of $\pi$. This is especially the case if the L\'evy process $X$ is observed at equidistant time points $\Delta j$ for some $\Delta>0$ and thus $\mathscr L(v)=e^{\Delta v}$. 

\begin{cor}\label{cor:ExpDec}
  Grant Assumptions~\ref{ass:jumps} and \ref{ass:time}(i)-(iii) and let $d=o(n/(2\log\log n))$. Suppose that $0\neq\Sigma\in\mathbb{M}$,
  where $\mathbb{M}$ is a convex subset of positive semi-definite $d\times d$ matrices and let $\alpha\in I$. If $|\mathscr{L}'(z)|\gtrsim\exp(-a|z|^\eta)$ for $z\in\mathbb C$ with $\Re(z)>0$ and for some $a,\eta>0$, we set
  \begin{align*}
  U&=\Big(\frac{\tau}{a(\|\Sigma\|_\infty+C_{\nu}+\alpha)}\log\Big(\frac n{d\log(d+1)}\Big)\Big)^{1/(2\eta)}\vee2\quad\text{and}\\
  \lambda &= C\big(\|\Sigma\|_\infty+C_{\nu}+\alpha\big)^{(s+2)/(2\eta)}\Big(\log \frac n{d\log(d+1)}\Big)^{-(s+2)/(2\eta)}
  \end{align*}
  for some $\tau<1/2$ and a constant $C>0$ which can be chosen independently of $\gamma,\Sigma$ and $\nu$. Then we have for sufficiently large $n$
  \[
  \|\tilde\Sigma_{n,\lambda}-\Sigma\|_{2}\le C\big(\|\Sigma\|_\infty+C_{\nu}+\alpha\big)^{(s+2)/(2\eta)}\sqrt{\rank(\Sigma)}\Big(\log \frac n{d\log(d+1)}\Big)^{-(s+2)/(2\eta)}
  \]
  with probability larger than $1-2(d+1)e^{-cd(\log\log n)^2}$ for some $c>0$ depending only on $w,C_\nu$ and $C_L$.
\end{cor}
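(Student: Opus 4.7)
The plan is to derive the rate as an application of Corollary~\ref{cor:FrobError} with $M=\Sigma$, which reduces the task to showing that the stated $\lambda$ dominates the upper bound on $\|\mathcal R_n\|_\infty$ provided by Theorem~\ref{thm:conR}, with a choice of $\kappa$ that both lies in the admissible interval of that theorem and produces the claimed probability.

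The second step is to convert the exponential hypothesis on $\mathscr L'$ into a concrete lower bound on $\inf_{|u|\le U}|\mathscr L'(-\psi(u))|$. Decomposing $\psi=-\tfrac12\langle u,\Sigma u\rangle+\Psi$ as in~\eqref{eq:psi} and splitting the jump integral into small jumps $\{|x|\le1\}$ (controlled by a second-order Taylor expansion of $e^{i\langle x,u\rangle}-1-i\langle x,u\rangle$ together with $\int|x|^2\wedge 1\,\nu(\d x)<\infty$) and large jumps $\{|x|>1\}$ (bounded by $2\alpha$ when $s\ge0$ and by a $C_\nu$-dependent term via Assumption~\ref{ass:jumps}(i) when $s<0$) yields $|\psi(u)|\lesssim(\|\Sigma\|_\infty+C_\nu+\alpha)|u|^{2}$ on $B_U^d$. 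The hypothesis then gives $\inf_{|u|\le U}|\mathscr L'(-\psi(u))|\gtrsim\exp\bigl(-aU^{2\eta}(\|\Sigma\|_\infty+C_\nu+\alpha)^\eta\bigr)$. Combined with the elementary bound $\|\phi\|_{L^1(B_U^d)}^{1/2}\le(\operatorname{vol}B_U^d)^{1/2}\lesssim U^{d/2}$ (the volume of the unit ball in $\R^d$ is uniformly bounded in $d$), the stochastic part of the bound in Theorem~\ref{thm:conR} reduces, up to multiplicative constants, to $\kappa(\sqrt n\,U^{2})^{-1}\exp\bigl(aU^{2\eta}(\|\Sigma\|_\infty+C_\nu+\alpha)^\eta\bigr)$.

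For the third step I would set $\kappa=\sqrt{d\log(d+1)}\log U$, the minimal value allowed by the lower admissibility bound of Theorem~\ref{thm:conR}. The prescribed choice of $U$ implies $aU^{2\eta}(\|\Sigma\|_\infty+C_\nu+\alpha)^\eta\le\tau\log(n/(d\log(d+1)))$, so the exponential factor in the stochastic bound is at most $(n/(d\log(d+1)))^{\tau}$. Since $\log U\asymp(\log\log n)/(2\eta)$ for large $n$, one has $\kappa^{2}\gtrsim d(\log\log n)^{2}$, producing the claimed probability $1-2(d+1)e^{-cd(\log\log n)^{2}}$. The growth restriction $d=o(n/(2\log\log n))$ together with $\tau<1/2$ then ensures that the stochastic term is dominated by the deterministic term $DU^{-(s+2)}\asymp(\|\Sigma\|_\infty+C_\nu+\alpha)^{(s+2)/(2\eta)}(\log(n/(d\log(d+1))))^{-(s+2)/(2\eta)}$, so the stated $\lambda$ fits Theorem~\ref{thm:conR}. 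The upper admissibility bound on $\kappa$ is simultaneously satisfied, since its right-hand side is of order $\sqrt n\,(n/(d\log(d+1)))^{-\tau}$, which for sufficiently large $n$ dominates $\sqrt{d\log(d+1)}\log\log n$ under the same growth restriction on $d$.

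The main obstacle is the bookkeeping of constants---in particular tracking the homogeneity in $\|\Sigma\|_\infty+C_\nu+\alpha$ so that the choice of $U$ produces a polynomially controllable stochastic term and the final rate has the correct prefactor $(\|\Sigma\|_\infty+C_\nu+\alpha)^{(s+2)/(2\eta)}$. Once the feasibility of $U$, $\kappa$, and $\lambda$ is verified, Corollary~\ref{cor:FrobError} specialised to $M=\Sigma$ gives $\|\tilde\Sigma_{n,\lambda}-\Sigma\|_{2}^{2}\le C_{w}\lambda^{2}(\rank(\Sigma)+\mathbf 1_{\alpha\neq 0})$ on the event $\{\|\mathcal R_{n}\|_\infty\le\lambda\}$, and taking square roots yields the claimed convergence rate.
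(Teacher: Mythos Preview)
Your proposal is correct and follows essentially the same route as the paper. The paper's proof is a one-line remark: it invokes Corollary~\ref{cor:FrobError} and Theorem~\ref{thm:conR} with $\kappa=2\sqrt{d\log(d+1)}(\log\log n)$, after bounding $\sup_{|u|\le U}|\psi(u)|\le\tfrac{U^2}{2}(\|\Sigma\|_\infty+C_\nu+\alpha)$ via \eqref{eq:psi} and \eqref{REst}; your choice $\kappa=\sqrt{d\log(d+1)}\log U$ is asymptotically equivalent since $\log U\asymp(\log\log n)/(2\eta)$, and your derivation of the bound on $|\psi(u)|$ and the subsequent balancing of stochastic and deterministic terms matches the paper's implicit reasoning, only spelled out in more detail.
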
 
The assertion follows from Corollary~\ref{cor:FrobError} and Theorem~\ref{thm:conR} (setting $\kappa=2\sqrt {d\log(d+1)}(\log \log n)$), taking into account that due to \eqref{eq:psi} (and \eqref{REst}) we have
  \begin{align*}
  \sup_{|u|\le U}|\psi(u)| & \le\|\Sigma\|_\infty U^{2}/2+2C_{\nu}U^{-(s\wedge0)}+\alpha
    \le\frac{U^{2}}{2}\bigl(\|\Sigma\|_\infty+C_{\nu}+\alpha\bigr).
  \end{align*}
This corollary shows that the exponential decay of $\mathscr L'$ leads to a severely ill-posed problem and, as a consequence, the rate is only logarithmic. Therein, the interplay between sample size $n$ and dimension $d$, i.e. the term $\frac n{d\log (d+1)}$ has been also observed for matrix completion \cite[Cor. 2]{koltchinskii2011nuclear}. Whenever there is some $\rho\in(0,1)$ such that $d\lesssim n^\rho$ the logarithmic term in the upper bound simplifies to $(\log n)^{-(s+2)/(2\eta)}$. The slow rate implies that $\tilde\Sigma_{n,\lambda}$ is consistent in absolute error only if $\rank\Sigma=o((\log n)^{(s+2)/\eta})$. Considering the relative error ${\|\tilde\Sigma_{n,\lambda}-\Sigma\|_2}/{\sqrt{\rank(\Sigma)}}$, this restriction vanishes. Note also that in a high-dimensional principal component analysis, the spectral norm $\|\Sigma\|_\infty$ may grow in $d$, too, cf. the discussion for the factor model by \citet{fanEtAl2013}.

If $\mathscr L'$ decays only polynomially, as for instance for the gamma subordinator, cf. Examples~\ref{ex:laplace}, the estimation problem is only mildly ill-posed and the convergence rates are polynomial in $n$.

\begin{cor}\label{cor:mildly} Grant Assumptions~\ref{ass:jumps} and \ref{ass:time}(i)-(iv) and assume $d\log (d+1)=o(n(\log n)^{-2})$. For a convex subset $\mathbb M\subset \R^{d\times d}$ of positive semi-definite matrices, let $0\neq\Sigma\in\mathbb{M}$, $\rank\Sigma=k$ and $\alpha\in I$. Suppose $|\mathscr{L}(z)|\lesssim (1+|z|)^{-\eta}$ and $|\mathscr{L}'(z)|\gtrsim|z|^{-\eta-1}$ for $z\in\mathbb C$ with $\Re(z)>0$ and for some $\eta>0$ such that $s+2>(2\eta\wedge k)$. Denoting the smallest strictly positive eigenvalue of $\Sigma$ by $\lambda_{min}(\Sigma)$, we set 
  \begin{align*}
    U&= \Big(\frac n{(\log n)^2d\log (d+1)}\Big)^{1/(2s+4+4\eta-(2\eta\wedge k))}\quad \text{and}\\
    \lambda&= C\bigl(\|\Sigma\|_\infty+C_{\nu}+\alpha\bigr)^{\eta+1}\lambda_{min}(\Sigma)^{-(2\eta\wedge k)/4}\Big(\frac n{(\log n)^2d\log (d+1)}\Big)^{-(s+2)/(2s+4+4\eta-(2\eta\wedge k))},
  \end{align*}
  for a constant $C>0$ independent of $\gamma,\Sigma$ and $\nu$. Then we have for sufficiently large $n$
  \[
  \|\tilde\Sigma_{n,\lambda}-\Sigma\|_{2}\le C\frac{(\|\Sigma\|_\infty+C_{\nu}+\alpha)^{\eta+1}}{\lambda_{min}(\Sigma)^{(2\eta\wedge k)/4}}\sqrt{\rank(\Sigma)}\Big(\frac n{(\log n)^2d\log (d+1)}\Big)^{-(s+2)/(2s+4+4\eta-(2\eta\wedge k))}
  \]
  with probability larger than $1-2(d+1)e^{-cd(\log n)^2}$ for some $c>0$ depending only on $w,C_\nu$ and $C_L$. 

\end{cor}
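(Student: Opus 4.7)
The plan is to combine Theorem~\ref{thm:conR} (applied with $q=1$, since Assumption~\ref{ass:time}(iv) is in force) with Corollary~\ref{cor:FrobError}, choosing the parameters $U,\lambda,\kappa$ so that the event $\{\|\mathcal R_n\|_\infty\le\lambda\}$ has the claimed probability and $\lambda$ realises the asserted rate. I would set $\kappa=2\sqrt{d\log(d+1)}\log n$, which immediately produces the exceptional probability $2(d+1)e^{-cd(\log n)^2}$ and satisfies the lower bound on $\kappa$ in Theorem~\ref{thm:conR}; the matching upper bound on $\kappa$ is then routine to verify using $d\log(d+1)=o(n(\log n)^{-2})$.

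The heart of the proof is two explicit estimates. First, using $\Re\psi(u)\le 0$ and $\Re\Psi(u)\le 0$, and writing $u=u_K+u_K^\perp$ with $u_K$ the projection of $u$ onto $\ker\Sigma$, one has
$$\tfrac{\lambda_{\min}(\Sigma)}{2}|u_K^\perp|^2\le|\psi(u)|\le\tfrac{U^2}{2}\bigl(\|\Sigma\|_\infty+C_\nu+\alpha\bigr),\quad u\in B_U^d.$$
The upper bound combined with $|\mathscr L'(z)|\gtrsim|z|^{-\eta-1}$ yields
$$\inf_{|u|\le U}|\mathscr L'(-\psi(u))|\gtrsim U^{-2(\eta+1)}\bigl(\|\Sigma\|_\infty+C_\nu+\alpha\bigr)^{-(\eta+1)},$$
while the lower bound together with $|\mathscr L(z)|\lesssim(1+|z|)^{-\eta}$, Fubini's theorem across $\R^d=\ker\Sigma\oplus(\ker\Sigma)^\perp$ and the substitution $v=\sqrt{\lambda_{\min}(\Sigma)/2}\,u_K^\perp$ produces
$$\|\phi\|_{L^1(B_U^d)}\lesssim U^{d-(2\eta\wedge k)}\,\lambda_{\min}(\Sigma)^{-(2\eta\wedge k)/2},$$
which is the source of the non-standard exponent $2\eta\wedge k$ and the factor $\lambda_{\min}(\Sigma)^{-(2\eta\wedge k)/4}$ in the target bound; the hypothesis $s+2>(2\eta\wedge k)$ will enter here to ensure that the eventual rate exponent is strictly positive.

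Plugging these two estimates into Theorem~\ref{thm:conR}, the stochastic term becomes
$$\frac{\kappa\,\|\phi\|_{L^1(B_U^d)}^{1/2}}{\sqrt n\,U^{2+d/2}\,\inf_{|u|\le U}|\mathscr L'(-\psi(u))|}\lesssim\frac{\kappa\,(\|\Sigma\|_\infty+C_\nu+\alpha)^{\eta+1}}{\sqrt n\,\lambda_{\min}(\Sigma)^{(2\eta\wedge k)/4}}\,U^{2\eta-(2\eta\wedge k)/2},$$
while the bias $DU^{-(s+2)}$ is supplied by Lemma~\ref{lem:ApproxError}. Balancing these gives $U^{2s+4+4\eta-(2\eta\wedge k)}\sim n/\bigl(d\log(d+1)(\log n)^2\bigr)$, from which the stated choices of $U$ and $\lambda$ drop out. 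Applying Corollary~\ref{cor:FrobError} with $M=\Sigma$ then converts $\|\mathcal R_n\|_\infty\le\lambda$ into $\|\tilde\Sigma_{n,\lambda}-\Sigma\|_2^2\lesssim\lambda^2(\rank(\Sigma)+\1_{\alpha\neq 0})$, and since $\rank(\Sigma)=k\ge 1$ the indicator can be absorbed into the rank term.

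The main obstacle is the integrability bound on $\|\phi\|_{L^1(B_U^d)}$: because $\Sigma$ is rank-deficient, the characteristic function $\phi=\mathscr L(-\psi(\cdot))$ fails to decay in the $(d-k)$-dimensional kernel direction, so the naive ``$U^{-d}$''-decay is cut short at $U^{-(2\eta\wedge k)}$. This is precisely the mechanism underlying the surprising phase transition—where, for $k<2\eta$, a larger rank $k$ yields a \emph{faster} convergence rate—that was announced in the introduction, so this estimate deserves to be carried out carefully. The remainder of the argument is essentially bookkeeping of constants and verification that the chosen $\kappa$ lies in the admissible window required by Theorem~\ref{thm:conR}.
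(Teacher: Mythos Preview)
Your proposal is correct and follows essentially the same route as the paper: the paper derives Corollary~\ref{cor:mildly} by combining Corollary~\ref{cor:FrobError} with Theorem~\ref{thm:conR} (with $q=1$), using exactly the bound $U^{-d}\|\phi\|_{L^1(B_U^d)}\lesssim(\sqrt{\lambda_{\min}(\Sigma)}U)^{-(2\eta\wedge k)}$ obtained via the splitting $\R^d=\ker\Sigma\oplus(\ker\Sigma)^\perp$ (this is spelled out in Remark~(i) following the corollary), together with the upper bound $|\psi(u)|\le\tfrac{U^2}{2}(\|\Sigma\|_\infty+C_\nu+\alpha)$ already displayed after Corollary~\ref{cor:ExpDec}. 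Your choice $\kappa=2\sqrt{d\log(d+1)}\log n$ and the subsequent balancing agree with the paper's.
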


\begin{remarks}\hspace*{1em}
\begin{enumerate}
\item 
The convergence rate reflects the regularity $s+2$ and the degree of ill-posedness $2\eta=2(\eta+1)-2$ of the statistical inverse problem, where $2(\eta+1)$ is the decay rate of the characteristic function and we gain two degrees since $\langle u,\Sigma u\rangle$ grows like $|u|^2$. 
The term $-(2\eta)\wedge k$ appearing in the denominator is very surprising since the rate becomes faster as the rank of $\Sigma$ increases up to some critical threshold value $2\eta$. To see this, it remains to note that the assumption $\rank\Sigma=k$ yields
\begin{align*}
  U^{-d}\|\phi\|_{L^1(B_U^d)}
    =&U^{-d}\int_{|u|\le U}|\mathscr L(-\psi(u))|\d u\\
    \lesssim& U^{-d}\int_{|u|\le U} (1+|\psi(u)|)^{-\eta}\d u\\
    \lesssim& U^{-k}\int_{u\in\R^k:|u|\le U}(1+\lambda_{min}(\Sigma)|u|^2)^{-\eta}\d u
    \lesssim \frac{(\sqrt{\lambda_{min}(\Sigma)}U)^{-2\eta+k}\vee1}{(\sqrt{\lambda_{min}(\Sigma)}U)^{k}}.  
\end{align*}
In order to shed some light on this phenomenon, we consider a related regression problem in Section~\ref{sec:reg}.

\item It is interesting to note that for very small $\eta$ we almost attain the parametric rate. Having the example of gamma-distributed increments $T_j$ in mind, a small $\eta$ corresponds to measures $\pi$ which are highly concentrated at the origin. In that case we expect many quite small increments $X_{\mathcal T(j)}-X_{\mathcal T(j-1)}$ where the jump component has only a small effect. On the other hand, for the remaining few rather large increments, the estimator is not worse than in a purely low-frequency setting. Hence, $|\mathscr{L}(z)|\sim (1+|z|)^{-\eta}$ heuristically corresponds to an interpolation between high- and low-frequency observations, cf. also \citet{kappus2015}.
\item If the condition $s+2\ge2\eta\wedge \rank\Sigma$ is not satisfied, the linearised stochastic error appears to be smaller than the remainder of the linearsation. It that case we only obtain the presumably suboptimal rate $n^{-(s+2)/(s+2+4\eta)}$ (for fixed $d$). It is still an open problem whether this condition is only an artefact of our proofs or if there is some intrinsic reason.
\end{enumerate}
\end{remarks}

Let us now investigate whether the rates are optimal in the minimax sense. The dependence on the the rank of $\Sigma$ is the same as usual in matrix estimation problems and seems natural. The optimality of the influence of the dimension $d$ is less clear. In lower bounds for matrix completion, cf. \citet[Thm. 6]{koltchinskii2011nuclear}, a similar dependence appears except for the fact that we face non-parametric rates. In the following we prove lower bounds for the rate of convergence in the number of observations $n$ for a fixed $d$. More general lower bounds in a high-dimensional setting where $d$ may grow with $n$ are highly intricate since usually lower bounds for high-dimensional statistical problems are based on Kullback-Leibler divergences in Gaussian models - an approach that cannot be applied in our model. This problem is left open for future research.

Let us introduce the class $\mathfrak{S}(s,p,C_\nu)$ of all L\'evy measures satisfying Assumption~\ref{ass:jumps} with $s\in(-2,\infty),p>2$ and a constant $C_\nu>0$. In order to prove sharp lower bounds, we need the following stronger assumptions on the time change:
\begin{ass}\label{ass:timeLowerBound} 
  Grant (i) and (ii) from Assumption~\ref{ass:time}. For $C_L>0$ and $L\in\N$ let the Laplace transform $\mathscr{L}(z)$ satisfy
  \begin{enumerate}
    \item[(m)] for some $\eta>0$ and all $z>0$
      \begin{eqnarray*}
        |\mathscr{L}'(z) |\le C_L(1+|z|)^{-\eta-1},\quad
        |\mathscr{L}^{(l+1)}(z)/\mathscr{L}^{(l)}(z)|\le C_L(1+|z|)^{-1},\quad l=1,\dots,L,
      \end{eqnarray*}
      or
    \item[(s)] for some $a,\eta>0$ and all $z\in\C$ with $\Re(z)>0$
      \begin{eqnarray*}
         |\mathscr{L}'(z)|\le C_Le^{-a|z|^\eta},\quad
        |\mathscr{L}^{(l+1)}(z)/\mathscr{L}^{(l)}(z)|\le C_L,\quad l=1,\dots,L.
      \end{eqnarray*}  
  \end{enumerate}
\end{ass}

\begin{thm}\label{thm:lowerBound} 
  Fix $d\ge1$ and let $s\in (-2,\infty),p>2,C_\nu>0$ and $k\in\{1,\dots,d\}$.
  \begin{enumerate}
    \item Suppose $2\eta> k$ and $k\le s$. 
    Under  Assumption~\ref{ass:timeLowerBound}(m) with $L>\frac{k+p\vee(-s)}{2}\vee k,$ it holds for any $\eps>0$ that
    \begin{align*}
      \liminf_{n\to\infty}\inf_{\widehat{\Sigma}}\sup_{\substack{\rank(\Sigma)=k\\ \nu\in\mathfrak{S}(s,p,C_\nu)}}\P_{(\Sigma,\nu,\mathcal{T})}^{\otimes n}\left(\|\widehat\Sigma-\Sigma\|_{2}>\varepsilon n^{-(s+2)/(2(s+2)+4\eta-k))}\right)>0.
    \end{align*}
    Moreover, if $\eta>1$, we have under  Assumption~\ref{ass:timeLowerBound}(m) with $L>\frac{1+p\vee(-s)}{2}\vee 1$ for any $\eps>0$
    \begin{align*}
      \liminf_{n\to\infty}\inf_{\widehat{\Sigma}}\sup_{\substack{2\eta\le\rank(\Sigma)\le d\\ \nu\in\mathfrak{S}(s,p,C_\nu)}}\P_{(\Sigma,\nu,\mathcal{T})}^{\otimes n}\left(\|\widehat\Sigma-\Sigma\|_{2}>\varepsilon n^{-(s+2)/(2(s+2)+2\eta))}\right)>0.
    \end{align*}
    \item Under the Assumption~\ref{ass:timeLowerBound}(s)  with $L>\frac{1+p\vee(-s)}{2}$ it holds for any $\eps>0$ that
    \begin{align*}
      \liminf_{n\to\infty}\inf_{\widehat{\Sigma}}\sup_{\substack{0<\rank(\Sigma)\le d\\\nu\in\mathfrak{S}(s,p,C_\nu)}}\P_{(\Sigma,\nu,\mathcal{T})}^{\otimes n}\left(\|\widehat\Sigma-\Sigma\|_{2}>\varepsilon (\log n)^{-(s+2)/(2\eta)}\right)>0.
    \end{align*}
  \end{enumerate}
  Note that the infima are taken over all estimators of $\Sigma$ based on $n$ independent observations of the random variable $Y_1$ whose law we denoted by $\P_{(\Sigma,\nu,\mathcal{T})}$.
\end{thm}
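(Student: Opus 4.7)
The proof will follow Le Cam's two-point method applied in the spectral domain. For each regime I would construct two L\'evy triplets $(\Sigma^{(0)},\gamma^{(0)},\nu^{(0)})$ and $(\Sigma^{(1)},\gamma^{(1)},\nu^{(1)})$ both satisfying the rank and jump-measure constraints with $\|\Sigma^{(0)}-\Sigma^{(1)}\|_{2}\asymp\eps_{n}$ equal to the claimed rate, such that the corresponding iid product laws $\P_{(\Sigma^{(\iota)},\nu^{(\iota)},\mathcal T)}^{\otimes n}$ have Kullback--Leibler divergence of order one. A standard two-point argument (Tsybakov's Lemma~2.2) then yields the lower bound. The independence of the $Y_j$ guaranteed by Assumption~\ref{ass:timeLowerBound}(i)--(ii) is what allows the product form.

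For the construction I would take $\Sigma^{(0)}=\sigma_{0}\,\diag(I_{k},0_{d-k})$ for some fixed $\sigma_{0}>0$ and $\Sigma^{(1)}=\Sigma^{(0)}+\delta_{n}\,hh^{\top}$ with $h$ a unit vector supported on the first $k$ coordinates, so that both matrices have rank $k$ and $\|\Sigma^{(0)}-\Sigma^{(1)}\|_{2}=\delta_{n}$. The crucial ingredient is a compensating signed measure $\mu_{n}=\nu^{(1)}-\nu^{(0)}$ designed so that
\[
    \psi^{(1)}(u)-\psi^{(0)}(u)=-\tfrac{\delta_{n}}{2}(h^{\top}u)^{2}+\int\bigl(e^{i\langle x,u\rangle}-1-i\langle x,u\rangle\1_{\{|x|\le1\}}\bigr)\,\d\mu_{n}(x)
\]
is of order $O(\delta_{n}U_{n}^{-s})$ on the low-frequency region $\{|u|\le U_{n}\}$. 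The natural choice is to pick $\hat\mu_{n}$ to match the polynomial $\tfrac{1}{2}(h^{\top}u)^{2}$ on $\{|u|\le U_{n}\}$ via a smooth truncation and to decay rapidly beyond, so that the inverse Fourier transform yields a valid signed measure. The regularity assumption $L>\tfrac{k+p\vee(-s)}{2}\vee k$ on $\mathscr L$ provides enough smoothness for $\mu_{n}$ to satisfy the moment bound $\int|x|^{p}\,\d|\mu_{n}|(x)<\infty$ and the Blumenthal--Getoor / Fourier-decay condition encoded in $\mathfrak S(s,p,C_\nu)$, so that $\nu^{(1)}=\nu^{(0)}+\mu_n$ remains an admissible L\'evy measure once $\delta_{n}$ is sufficiently small.

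Statistical indistinguishability is obtained by linearising the characteristic functions,
\[
    \phi^{(1)}(u)-\phi^{(0)}(u)\approx\mathscr L'(-\psi^{(0)}(u))\bigl(\psi^{(1)}(u)-\psi^{(0)}(u)\bigr),
\]
and by bounding the KL divergence for iid observations through Plancherel,
\[
    n\,\operatorname{KL}\bigl(\P_{(\Sigma^{(0)},\nu^{(0)},\mathcal T)},\P_{(\Sigma^{(1)},\nu^{(1)},\mathcal T)}\bigr)\lesssim n\int_{\R^{d}}|\phi^{(1)}(u)-\phi^{(0)}(u)|^{2}\,\d u.
\]
On the $k$-dimensional range of $\Sigma^{(0)}$, $|\psi^{(0)}(u)|\asymp|u|^{2}$ so that $|\mathscr L'(-\psi^{(0)}(u))|\lesssim|u|^{-2(\eta+1)}$; transversally only the polynomial decay $(1+|u|^{2})^{-s/2}$ from the jump component helps. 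Splitting the integral as $\R^{d}=\R^{k}\times\R^{d-k}$ and integrating in polar coordinates on the range yields, when $2\eta>k$, a bound of order $\delta_{n}^{2}\,U_{n}^{4\eta-k-2(s+2)}$. Equating this to $n^{-1}$ with $\delta_{n}\asymp U_{n}^{-(s+2)}$ produces $\eps_{n}\asymp n^{-(s+2)/(2(s+2)+4\eta-k)}$, matching the claimed rate.

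The main obstacle will be the explicit verification that $\nu^{(1)}=\nu^{(0)}+\mu_{n}$ is non-negative and truly lies in $\mathfrak S(s,p,C_\nu)$, since $\mu_{n}$ is only a signed measure; positivity requires careful tuning of $\delta_{n}$, $U_{n}$ and the bump defining $\hat\mu_{n}$ relative to a reference $\nu^{(0)}$ chosen to be bounded below on the relevant scale. The second statement of case~(i) with $2\eta\le\rank(\Sigma)$ is handled by the same strategy but with a perturbation that effectively acts on a $2\eta$-dimensional subspace rather than the full range, so that the saving $-k$ is lost and the rate $n^{-(s+2)/(2(s+2)+2\eta)}$ emerges; the hypothesis $\eta>1$ is exactly what is needed to ensure integrability of $|\mathscr L'(-\psi^{(0)}(u))|^{2}$ near the origin. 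For the severely ill-posed case~(ii), the exponential bound $|\mathscr L'(z)|\lesssim e^{-a|z|^{\eta}}$ turns the variance into an exponentially small quantity; balancing it against the polynomial bias $U_{n}^{-(s+2)}$ forces $U_{n}\asymp(\log n)^{1/(2\eta)}$ and yields the logarithmic rate $(\log n)^{-(s+2)/(2\eta)}$.
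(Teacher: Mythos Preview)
Your high-level strategy matches the paper's: a two-point Le Cam argument with a perturbation of $\Sigma$ compensated by a perturbation of $\nu$ so that the characteristic exponents agree on a low-frequency ball. The paper's concrete construction takes a flat-top kernel $K$ with $\mathcal F K\equiv 1$ on $\{|u|\le1\}$ and sets $\nu_n=\nu_0+a\delta_n^{s-k}\Delta K(\cdot/\delta_n)$, $\Sigma_n=(1+2a\delta_n^{s+2})\Sigma_0$; then $\psi_n-\psi_0=a\delta_n^{s+2}|u|^2(1-\mathcal F K(\delta_n u))$ vanishes identically on $\{|u|\le1/\delta_n\}$, which is a cleaner realisation of what you describe.

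There is, however, a genuine gap in your information-distance step. The inequality
\[
n\,\operatorname{KL}\bigl(\mathbb P_0,\mathbb P_1\bigr)\lesssim n\int_{\R^d}|\phi^{(1)}(u)-\phi^{(0)}(u)|^2\,\d u
\]
is not valid as stated: Plancherel turns $\int|f_1-f_0|^2$ into $\int|\phi^{(1)}-\phi^{(0)}|^2$, but neither KL nor $\chi^2$ equals $\int|f_1-f_0|^2$. The paper works with $\chi^2$ and first proves a \emph{polynomial lower bound} on the reference density, $f_0(x)\gtrsim(1+\sum_j|x_j|^{2L})^{-1}$, obtained from the compound-Poisson expansion of the law of $Y^{(0)}$ and the specific choice $\nu_0(x)=(1+\sum_j|x_j|^{2L})^{-1}$. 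This yields
\[
\chi^2(\mathbb P_n,\mathbb P_0)\lesssim\|\phi_n-\phi_0\|_{L^2(\R^k)}^2+\sum_{j=1}^k\|\partial_j^L(\phi_n-\phi_0)\|_{L^2(\R^k)}^2,
\]
and the $L$-th derivatives are handled via Fa\`a di Bruno together with the bounds $|\mathscr L^{(l+1)}/\mathscr L^{(l)}|\lesssim(1+|z|)^{-1}$ from Assumption~\ref{ass:timeLowerBound}. This is the true role of the condition $L>\tfrac{k+p\vee(-s)}{2}\vee k$: it guarantees both that $\nu_0$ is integrable with the required moments \emph{and} that enough derivatives of $\mathscr L$ are controlled to absorb the weight $(1+\sum|x_j|^{2L})$. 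Your proposal attributes the $L$-condition to ``smoothness for $\mu_n$'', which misses this mechanism entirely; without the derivative terms the $\chi^2$ bound does not close.

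For the second statement of~(i) the paper does \emph{not} perturb on a $2\eta$-dimensional subspace. It takes a one-dimensional perturbation (first coordinate only), so that the $\chi^2$ integral is one-dimensional, and then splits $\int_{|u|>1/\delta_n}|\mathscr L'(-\psi_0)|^2|u|^4\,\d u\le \sup_{|u|>1/\delta_n}\{|\mathscr L'(-\psi_0)||u|^2\}\cdot\int_{\R}|\mathscr L'(-\psi_0)||u|^2\,\d u$; the integrability condition is at infinity, not at the origin, and this is what produces the exponent $2\eta$ instead of $4\eta-k$.
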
 
Up to a logarithmic factors in the mildly ill-posed case, the upper and lower bounds coincide and thus our convergence rates are minimax optimal for fixed $d$.

\section{Discussions and extensions}\label{seq:ext}
In this section we discuss several generalisations of the previously developed theory.

\subsection{Estimating the time-change}\label{sec:lapl_est}

In order to define the diffusion matrix estimator we have assumed that the Laplace transform of the time-change increments is known. In the context of random observation times it is reasonable to suppose that we additionally observe $\mathcal T(1),\dots, \mathcal T(m)$ for some sample size $m$. Using empirical process theory, it is not difficult to show that
\[
  \mathscr L_m(z):=\frac1m\sum_{j=1}^me^{-z(\mathcal T(j)-\mathcal T(j-1))}
\]
converges uniformly to $\mathscr L(z)$ for \(z\in\R_+\) with $\sqrt n$-rate as exploited by \citet{chorowskiTrabs2015}. This result could be generalised to appropriate arguments on the complex plane and may allow for replacing $\mathscr L$ by $\mathscr L_n$ for the estimator of the characteristic exponent \eqref{eq:PsiHat}.

More precisely, we need to invert the function \(\mathscr L_{m}.\) This is not a trivial problem, since the inverse function \(\mathscr L^{-1}_{m}\) should be well defined on a complex plane (we are going to plug in a complex-valued function \(\phi_{n}(u)\)). We use the Lagrange inversion formula to compute the derivatives of  \(\mathscr L^{-1}_{m}(z)\) at point \(z=1\) and then expand this function into Taylor series around \(z=1.\) This approximation turns out to be very stable and can even improve the quality of  estimating \(\Sigma\), see Section~\ref{sec:sim_lapl_est}. We have a formal expansion 
\begin{eqnarray*}
\mathscr L_{m}(z)=\sum_{k=1}^\infty M_{k}\frac{(-z)^k}{k !},\quad  M_{k}:=M_k(m)=\frac{1}{m}\sum_{j=1}^m (\mathcal T(j)-\mathcal T(j-1))^k.
\end{eqnarray*}
In the case of a continuous distribution of \(\mathcal T(1),\)  \(\mathscr L'_{m}(z)>0\) for any \(z\in \mathbb{R}\) with probability one and the inverse function \(\mathscr L^{-1}_{m}(z)\) exists and is unique. In some vicinity of the point \(z=1\) in the complex plan, we obtain the expansion
\begin{eqnarray}
\label{eq: Linv}
\mathscr L^{-1}_{m}(z)=\sum_{j=1}^\infty H_{j}\frac{(z-1)^j}{j !}
\end{eqnarray}
with
\begin{gather*}
H_1=\frac{1}{M_1},\quad H_{j}=\frac{1}{M^j_{1}} \sum_{k=1}^{j-1} (-1)^k j^{(k)} B_{j-1,k}(\hat{M}_1,\hat{M}_2,\ldots,\hat{M}_{j-k}), \quad j \geq 2,\\
\widehat M_k:=\frac{M_{k+1}}{(k+1)M_1}\qquad \text{and}\qquad j^{(k)}:=j(j+1)\cdot\ldots\cdot (j+k-1), 
\end{gather*}
where \(B_{j-1,k}\) are partial Bell polynomials. Formula~\eqref{eq: Linv} can then be used to obtain another estimator for the characteristic exponent which adapts to the unknown Laplace transform:
\[
  \tilde\psi_{n,m}(u)=-\mathscr L^{-1}_{m}(\phi_n(u)).
\]
We investigate the numerical performance of the this approach in Section~\ref{sec:sim_lapl_est}.

\subsection{Mixing time-change}\label{sec:mixing}
Motivated by the integrated CIR process from Example~\ref{ex:laplace}(iv), for instanced used by \citet{carrEtAl2003} to model stock price processes, we will generalise the results from the previous section to time-changes $\mathcal T$, whose increments are not i.i.d., but form a strictly stationary $\alpha$-mixing sequence. Recall that the strong mixing coefficients of the sequence $(T_j)$ are defined by 
    \[
      \alpha_T(n):=\sup_{k\ge1}\alpha(\mathcal M_k,\mathcal G_{k+n}),\quad 
      \alpha(\mathcal M_k,\mathcal G_{\ell}):=\sup_{A\in\mathcal M_k,B\in\mathcal G_{\ell}}|\mathbb P(A\cap B)-\mathbb P(A)\mathbb P(B)|
    \]
for $\mathcal M_k:=\sigma(T_j:j\le k)$ and $\mathcal G_k:=\sigma(T_j:j\ge k)$.
We replace Assumption~\ref{ass:time} by the following
\begin{ass}\label{ass:time2} 
  Let the time-change $\mathcal{T}$ satisfy: 
  \begin{enumerate} 
    \item $\E[\mathcal{T}^{p}]<\infty$ for some $p>2$. 
    \item The sequence $T_{j}=\mathcal{T}(j)-\mathcal{T}(j-1)$,
    $j\in\mathbb{N},$
    is strictly stationary with invariant measure $\pi$ and $\alpha$-mixing with 
    \begin{eqnarray*}
    \alpha_{T}(j)\le\overline{\alpha}_{0}\exp(-\overline{\alpha}_{1}j),\quad j\in\mathbb{N},
    \end{eqnarray*}
    for some positive constants $\overline{\alpha}_{0}$
    and $\overline{\alpha}_{1}.$
    \item The Laplace transform $\mathscr{L}$
    satisfies $|\mathscr{L}''(z)/\mathscr{L}'(z)|\le C_L$
    for some $C_L>0$ for all $z\in\C$ with $\Re(z)>0$. 
  \end{enumerate}
\end{ass} 

If $T_{j}$ is $\alpha$-mixing, Lemma 7.1 by \citet{belomestny2011}
shows that the sequence $(Y_{j})$ inherits the mixing-property from
the sequence $(T_{j})$. In combination with the finite moments $\E[|Y_{j}|^{p}]<\infty$
for $p>2$ we can apply a concentration inequality on the empirical
characteristic function $\phi_{n}$, see Theorem~\ref{thm:concentration}
below, which follows from the results by \citet{MerlevedeEtAl2009}. 

In the $\alpha$-mixing case, a noncommutative Bernstein inequality is not known (at least to the authors' knowledge) and thus we cannot hope for a concentration inequality for $\|\mathcal R_n\|_\infty$ analogous to Theorem~\ref{thm:conR}. Possible workarounds are either to estimate 
\begin{equation}\label{eq:EstMix}
  \|\mathcal R_n\|_\infty\le 2\int_{\mathbb{R}^{d}}\Big|\frac2{|u|^2}\Re\hat{\psi}_{n}(u)-\langle\tilde\Theta(u),\Sigma\rangle\Big|\|\tilde\Theta(u)\|_\infty w_{U}(u)\,\d u,
\end{equation}
where $\|\tilde\Theta(u)\|_\infty=1$, or to bound $\|\mathcal R_n\|_\infty\le (d+1)\|\mathcal R_n\|_{max}$ for the maximum entry norm $\|A\|_{max}=\max_{ij}|A_{ij}|$ for $A\in\R^{(d+1)\times (d+1)}$.  While the former estimate leads to suboptimal rates for polynomially decaying $\mathscr L$, we loose a factor $d$ in the latter bound which is critical in a high-dimensional setting.

Having in mind that the Laplace transform of the integrated CIR process decays exponentially, we pursue the first idea and obtain the following concentration result:
\begin{thm}\label{thm:conMix}
Grant Assumptions~\ref{ass:jumps} and \ref{ass:time2} and let $\rho>1/2$. There are constants $\underline\xi,\overline\xi>0$ such that for any $n\in\N$ and $U,\kappa>0$ satisfying
\[
  \underline\xi\sqrt{d\log n}<\kappa<\overline\xi(\log U)^{-\rho}\inf_{|u|\le U}|\mathscr L'(-\psi(u))|(\log n)^{-1/2}\sqrt n
\]
the matrix $\mathcal R_n$ from \eqref{eq:error} satisfies for some constants $c,C,D>0$ depending only on $w, C_\nu$ and $C_L$ that
\[
\P\Big(\|\mathcal R_{n}\|_\infty\geq \frac{\kappa (\log U)^\rho}{\sqrt nU^{2}\inf_{|u|\le U}|\mathscr L'(-\psi(u))|}+D U^{-s-2}\Big)\le Ce^{-c\kappa^2}+Cn^{-p/2}.
\]
In particular, we have $\P(\|\mathcal R_n\|_\infty>\lambda)\le Ce^{-c\kappa^2}+Cn^{-p/2}$ if 
\[
  \lambda\ge\frac{\kappa(\log U)^\rho}{\sqrt nU^{2}\inf_{|u|\le U}|\mathscr L'(-\psi(u))|}+D U^{-s-2}.
\]
\end{thm}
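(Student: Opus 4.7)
The plan is to mimic the proof of Theorem~\ref{thm:conR} from the i.i.d. case, but to replace the noncommutative Bernstein inequality (which is not available under $\alpha$-mixing) by the crude uniform bound \eqref{eq:EstMix}. Starting from the decomposition \eqref{eq:decomp}, I would first treat the purely deterministic bias
\[
4\int_{\R^{d}}\frac{|\Re\Psi(u)+\alpha|}{|u|^2}w_{U}(u)\,\d u,
\]
which is of order $U^{-(s+2)}$ by Lemma~\ref{lem:ApproxError}; this produces the $DU^{-(s+2)}$ summand in the bound. This part of the argument is entirely unchanged from the i.i.d.\ case, so no mixing is involved here.

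Next I would focus on the stochastic piece. Since $\|\tilde\Theta(u)\|_\infty\le 1$ on $\supp w_U$, we may bound
\[
\Big\|\int_{\R^{d}}|u|^{-2}\Re\big(\mathscr{L}^{-1}(\phi_{n}(u))-\mathscr{L}^{-1}(\phi(u))\big)\tilde\Theta(u)w_{U}(u)\d u\Big\|_\infty\le\int_{\R^{d}}\frac{|\mathscr L^{-1}(\phi_n(u))-\mathscr L^{-1}(\phi(u))|}{|u|^{2}}w_{U}(u)\,\d u.
\]
A Taylor expansion of the continuously chosen inverse $\mathscr L^{-1}$ around $\phi(u)$ and the inverse function theorem give $(\mathscr L^{-1})'(\phi(u))=1/\mathscr L'(-\psi(u))$, so
\[
\mathscr L^{-1}(\phi_{n}(u))-\mathscr L^{-1}(\phi(u))=-\frac{\phi_n(u)-\phi(u)}{\mathscr L'(-\psi(u))}+\mathrm{Rem}_n(u),
\]
with a remainder controlled by $\sup_z|(\mathscr L^{-1})''(z)|\cdot|\phi_n(u)-\phi(u)|^2$, where the supremum is over a segment joining $\phi_n(u)$ and $\phi(u)$. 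Using $(\mathscr L^{-1})''=-\mathscr L''/(\mathscr L')^3$ together with Assumption~\ref{ass:time2}(iii) yields $|(\mathscr L^{-1})''|\lesssim C_L/|\mathscr L'|^{2}$, so the remainder is bounded by a constant times $|\phi_n(u)-\phi(u)|^2/|\mathscr L'(-\psi(u))|^2$.

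The central ingredient is then Theorem~\ref{thm:concentration} from the appendix, which gives under Assumption~\ref{ass:time2} and the moment condition a uniform concentration of the empirical characteristic function of the form
\[
\P\Bigl(\sup_{|u|\le U}|\phi_n(u)-\phi(u)|>\tfrac{\kappa(\log U)^{\rho}}{\sqrt n}\Bigr)\le Ce^{-c\kappa^2}+Cn^{-p/2}
\]
for $\kappa\geq\underline\xi\sqrt{d\log n}$, the additional polynomial tail $n^{-p/2}$ being the price of mixing via the Merlev\`ede--Peligrad--Rio-type inequality. On the complement event, since $|u|\asymp U$ on $\supp w_U$ and $\int w_U\le 1$, the linear term integrates to $\lesssim\kappa(\log U)^{\rho}/(\sqrt n\,U^{2}\inf_{|u|\le U}|\mathscr L'(-\psi(u))|)$, matching the first summand of the stated bound.

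The key obstacle, and the reason for the upper constraint on $\kappa$, is ensuring that the quadratic remainder $\mathrm{Rem}_n$ does not dominate. On the good event one has $|\phi_n(u)-\phi(u)|^2\lesssim\kappa^2(\log U)^{2\rho}/n$, so after integration the remainder contributes at most $\kappa^2(\log U)^{2\rho}/(nU^{2}(\inf|\mathscr L'|)^2)$. Imposing that this quantity be at most the linear bound amounts to $\kappa(\log U)^{\rho}/\sqrt n\lesssim\inf_{|u|\le U}|\mathscr L'(-\psi(u))|$, which is precisely the upper restriction $\kappa<\overline\xi(\log U)^{-\rho}\inf|\mathscr L'|\sqrt{n/\log n}$ (the extra $\sqrt{\log n}$ being the safety margin coming from the exact form of the concentration rate in Theorem~\ref{thm:concentration}). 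Assembling the three contributions — approximation bias, linearised stochastic term, and negligible remainder — and tracking the union bound then yields the probability bound $Ce^{-c\kappa^2}+Cn^{-p/2}$ as stated.
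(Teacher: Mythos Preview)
Your proposal is correct and follows essentially the same route as the paper's proof: bound $\|\mathcal R_n\|_\infty$ via \eqref{eq:EstMix} instead of the noncommutative Bernstein inequality, linearise $\mathscr L^{-1}$ by Taylor expansion (this is exactly Lemma~\ref{lem:linearisation}, which the paper cites rather than redoing), control the bias by Lemma~\ref{lem:ApproxError}, and conclude with the mixing concentration result Theorem~\ref{thm:concentration}. One small imprecision: your remainder bound $|\mathrm{Rem}_n(u)|\lesssim C_L|\phi_n(u)-\phi(u)|^2/|\mathscr L'(-\psi(u))|^2$ replaces the intermediate point on the segment by $-\psi(u)$, which is only justified on the event $\mathcal H_{n,U}=\{\|\phi_n-\phi\|_U\le \tfrac{2}{C_L}\inf_{|u|\le U}|\mathscr L'(-\psi(u))|\}$; the paper handles this explicitly by adding $\P(\mathcal H_{n,U}^c)$ to the union bound and showing that this probability is itself dominated by the concentration tail under the stated upper constraint on $\kappa$.
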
 
The suboptimal term $\sqrt{\log n}$ in lower bound of $\kappa$ comes from the estimate \eqref{eq:EstMix}. The term $(\log U)^\rho$ could be omitted with a more precise estimate of the linearised stochastic error term (similar to the proof of Theorem~\ref{thm:conR}), but let us keep the proof of Theorem~\ref{thm:conMix} simple, see Section~\ref{sec:proodmixing}. For exponentially decaying Laplace transforms the resulting rate is already sharp and coincides with our results for the independent case as soon as $d\le n^\tau$ for some $\tau\in[0,1)$. 
\begin{cor}
  Grant Assumptions~\ref{ass:jumps} and \ref{ass:time2}, assume $d=o(n/\log n)$ and let $\mathbb{M}$ be a convex subset of positive semi-definite $d\times d$
  matrices. Suppose that $0\neq\Sigma\in\mathbb{M}$ and $\alpha\in I$. Assume $|\mathscr{L}'(z)|\gtrsim\exp(-a|z|^\eta)$ for $z\in\mathbb C$ with $\Re(z)>0$ and for $a,\eta>0$. Choosing
  \[
  U=\Big(\frac{\tau\log(n/(d\log n))}{a(\|\Sigma\|_\infty+C_{\nu}+\alpha)}\Big)^{1/(2\eta)}\vee2\quad\text{and}\quad
  \lambda = C_1\big(\|\Sigma\|_\infty+C_{\nu}+\alpha\big)^{(s+2)/(2\eta)}\Big(\log \frac n{d\log n}\Big)^{-(s+2)/(2\eta)}
  \]
  for some $\tau<1/2$ and a constant $C_1>0$, we have
  \[
  \|\tilde\Sigma_{n,\lambda}-\Sigma\|_{2}\le C_{1}\bigl(\|\Sigma\|_\infty+C_{\nu}+\alpha\bigr)^{(s+2)/(2\eta)}\sqrt{\rank(\Sigma)}\Big(\log \frac n{d\log n}\Big)^{-(s+2)/(2\eta)}
  \]
  with probability larger than $1-C_{2}(e^{-c\sqrt{d\log n}}+n^{-p/2})$ and for some $C_2,c>0$. The constants $C_1,C_2,c$ depend only on $w,C_L,C_\nu$.
\end{cor}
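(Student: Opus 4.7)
The argument is a direct adaptation of the proof of Corollary~\ref{cor:ExpDec}: I would combine the Frobenius-norm oracle inequality of Corollary~\ref{cor:FrobError}, applied at the oracle choice $M=\Sigma$, with the concentration bound of Theorem~\ref{thm:conMix} in place of Theorem~\ref{thm:conR}. On the event $\{\|\mathcal R_n\|_\infty\le\lambda\}$, plugging $M=\Sigma$ into Corollary~\ref{cor:FrobError} yields
\[
\|\tilde\Sigma_{n,\lambda}-\Sigma\|_2 \;\le\; \sqrt{C_w}\,\lambda\,\sqrt{\operatorname{rank}(\Sigma)+\1_{\alpha\neq0}},
\]
which already has the advertised form since $\operatorname{rank}(\Sigma)\ge1$. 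It therefore remains to pick $\kappa$ and to check that the prescribed $\lambda$ majorises the sum of the stochastic and approximation terms appearing in the bound of Theorem~\ref{thm:conMix}.

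I would take $\kappa$ of order $\sqrt{d\log n}$ (the minimal admissible value), so that Theorem~\ref{thm:conMix} delivers a probability bound $C(e^{-c\kappa^2}+n^{-p/2})$ majorising the advertised $C_{2}(e^{-c\sqrt{d\log n}}+n^{-p/2})$. From~\eqref{eq:psi} and Assumption~\ref{ass:jumps}, one has $|\psi(u)|\le\tfrac12 U^{2}(\|\Sigma\|_\infty+C_\nu+\alpha)$ for $|u|\le U$, so the assumed exponential lower bound on $\mathscr L'$ gives
\[
\inf_{|u|\le U}|\mathscr L'(-\psi(u))| \;\gtrsim\; \exp\!\bigl(-a\,U^{2\eta}\,[\tfrac12(\|\Sigma\|_\infty+C_\nu+\alpha)]^{\eta}\bigr).
\]
Inserting the prescribed $U^{2\eta}=\tau\log(n/(d\log n))/[a(\|\Sigma\|_\infty+C_\nu+\alpha)]$ with $\tau<1/2$, this right-hand side is bounded below by $(n/(d\log n))^{-\tau'}$ for some $\tau'<1/2$. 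Combined with the assumption $d=o(n/\log n)$, this also verifies the upper-bound condition $\kappa\le\overline\xi(\log U)^{-\rho}\inf|\mathscr L'|(\log n)^{-1/2}\sqrt n$ required by Theorem~\ref{thm:conMix}.

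Feeding these estimates back into Theorem~\ref{thm:conMix}, the linearised stochastic term is of order $\sqrt{d\log n}\,(\log U)^{\rho}\,(n/(d\log n))^{\tau'}/(\sqrt n\,U^{2})$, which by $\tau'<1/2$ and $d=o(n/\log n)$ is of strictly smaller order than the approximation contribution $DU^{-(s+2)}\asymp\bigl(\log(n/(d\log n))\bigr)^{-(s+2)/(2\eta)}$. The stated $\lambda$ therefore dominates both terms and is admissible, yielding the claim. The main obstacle I anticipate is the bookkeeping of the $\Sigma$- and $\eta$-dependent constants when passing from $U^{2\eta}$ to the lower bound on $\inf|\mathscr L'|$: the $\eta$-th power in the exponent produces a residual factor $(\|\Sigma\|_\infty+C_\nu+\alpha)^{\eta-1}/2^{\eta}$ that must be absorbed, together with $\tau<1/2$, into a $\tau'<1/2$; once this is done, the remaining estimates are mechanical and parallel those of Corollary~\ref{cor:ExpDec}.
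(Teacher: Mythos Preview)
Your proposal is correct and follows exactly the paper's approach: the paper does not spell out a separate proof for this corollary, treating it as the $\alpha$-mixing analogue of Corollary~\ref{cor:ExpDec}, obtained by combining Corollary~\ref{cor:FrobError} at $M=\Sigma$ with Theorem~\ref{thm:conMix} (in place of Theorem~\ref{thm:conR}) and the bound $\sup_{|u|\le U}|\psi(u)|\le\tfrac12U^{2}(\|\Sigma\|_\infty+C_\nu+\alpha)$. Your choice $\kappa\asymp\sqrt{d\log n}$ is the natural one here, and the constant bookkeeping you flag concerning the $(\|\Sigma\|_\infty+C_\nu+\alpha)^{\eta-1}/2^{\eta}$ factor is a subtlety the paper likewise does not make explicit in the proof of Corollary~\ref{cor:ExpDec}.
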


\subsection{Low rank plus sparse matrix}\label{sec:sparse}

In view of \citet{fanEtAl2011,fanEtAl2013} it might be interesting in applications to relax the low rank assumption on the diffusion matrix $\Sigma$ to case where $\Sigma=\Sigma_r+\Sigma_s$ for a low-rank matrix $\Sigma_r\in\R^{d\times d}$ and a sparse matrix $\Sigma_s\in\R^{d\times d}$. The underlying idea is that $\Sigma_r$ reflects a low dimensional factor model while $\Sigma_s$ represents a sparse error covariance matrix. Sparsity in $\Sigma_s$ means here that most entries of $\Sigma_s$ are zero or very small, see for instance \cite{bickelLevina2008} for this approach to sparsity. More precisely, let us assume that the entry-wise $\ell_q$-norm $|\Sigma_s|_q$ for some $q\in[0,2)$ is small. For the sake of clarity, we focus on extending the estimator $\hat\Sigma_{n,\lambda}$ from \eqref{eq:sigmaHat} while it can be easily seen how to modify the following for the bias corrected estimator $\tilde\Sigma_{n,\lambda}$. 

In terms of the vectorisation operator $\vec(A)=(a_{11},\dots,a_{d1},a_{21},\dots,a_{dd})^\top$ for any $A=(a_{i,j})\in\R^{d\times d}$ we rewrite
\begin{align*}
  \langle\Theta(u),\Sigma_s\rangle=\tr(\Theta(u)^\top \Sigma_s)=\vec(\Theta(u))^\top\vec(\Sigma_s)=\langle\diag(\vec(\Theta(u))),\diag(\vec(\Sigma_s))\rangle.
\end{align*}
Hence, defining the set $\mathbb D:=\{\diag(A,\diag(a)):A\in\R^{d\times d},a\in\R^{d^2}\}\subset\R^{(d+d^2)\times(d+d^2)}$ and the matrices 
\[
  \bar\Theta(u):=\diag\big(\Theta(u),\diag(\vec(\Theta(u)))\big)\in\mathbb D\quad\text{and}\quad \bar\Sigma:=\diag\big(\Sigma_r,\diag(\vec(\Sigma_s))\big)\in\mathbb D,
\]
we obtain the representation
\begin{align*}
   \langle \Theta(u),\Sigma\rangle=\langle \Theta(u),\Sigma_r\rangle+\langle\Theta(u),\Sigma_s\rangle
   =\langle \bar\Theta(u),\bar\Sigma\rangle.
\end{align*}
Motivated by this reformulation, we introduce the estimator $\bar{\Sigma}_{n,\lambda}:=\bar{\Sigma}_{r}+\bar{\Sigma}_{s}$ where
\begin{equation}
\diag\big(\bar\Sigma_r,\diag(\vec(\bar\Sigma_s))\big):=\argmin_{M\in\bar {\mathbb M}}\left\{ \int_{\mathbb{R}^{d}}\big(2|u|^{-2}\Re\hat{\psi}_{n}(u)+\langle\bar\Theta(u),M\rangle\big)^{2}w_{U}(u)\,\d u+\lambda\|M\|_{1}\right\} \label{eq:sigmabar}
\end{equation}
for a subset $\bar{\mathbb M}\subset\mathbb D$. Since $\rank(\bar\Sigma)=\rank(\Sigma_r)+|\Sigma_s|_0$, the nuclear norm penalisation in \eqref{eq:sigmabar} will enforce a low rank structure in $\bar{\Sigma}_{r}$ and a sparse structure in $\bar{\Sigma}_{s}$.
In order to carry over the oracle inequalities, we first need verify the isometry property for the modified weighted norm
\[
  \langle A,B\rangle_{\bar w}:=\int_{\R^d}\langle\bar\Theta(u),A\rangle\langle\bar\Theta(u),B\rangle w_U(u)\d u,\quad A,B\in\mathbb D.
\]
Indeed, we easily deduce from Lemma~\ref{lem:rip} and $|\Theta(u)|_2=1$:
\begin{lem}\label{lem:rip2}
  For any symmetric matrix $A\in\mathbb D$ we have
  \begin{equation*}
    \underline\varkappa_{\bar w}\|A\|_2\le\|A\|_{\bar w} \le \bar\varkappa_{\bar w}\|A\|_2,
  \end{equation*} 
  where $\underline\varkappa_{\bar w}^2:=\min_{i,j=1,2}\int_{\R^d}v_{i}^{2}v_j^2/(|v|^{4})w(v)\d v$ and $\bar\varkappa_w^2:=2\int_{\R^d}w(v)\d v$.
\end{lem}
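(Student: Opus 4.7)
The plan is to reduce the inequality to a two-block version of Lemma~\ref{lem:rip}. Writing a generic $A = \diag(A_r, \diag(a)) \in \mathbb D$ and identifying $a$ with $\vec(A_s)$ for a matrix $A_s \in \R^{d\times d}$, the identity $\vec(\Theta(u))^\top \vec(A_s) = \langle \Theta(u), A_s\rangle$ combined with the block structure of $\bar\Theta(u)$ yields
\[
  \langle \bar\Theta(u), A\rangle = \langle \Theta(u), A_r\rangle + \langle \Theta(u), A_s\rangle,
\]
while $\|A\|_2^2 = \|A_r\|_2^2 + \|A_s\|_2^2$ (the second equality using $|a|_2 = |A_s|_2$).

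For the upper bound, I would combine $(x+y)^2 \le 2x^2 + 2y^2$ with the Cauchy--Schwarz bound $|\langle \Theta(u), B\rangle| \le |\Theta(u)|_2 \|B\|_2 = \|B\|_2$, where $|\Theta(u)|_2 = |uu^\top|_2/|u|^2 = 1$. This gives the pointwise estimate $\langle \bar\Theta(u), A\rangle^2 \le 2\|A\|_2^2$; integrating against $w_U$ and invoking the scaling $\int_{\R^d} w_U(u)\,du = \int_{\R^d} w(v)\,dv$ produces $\|A\|_{\bar w}^2 \le 2\|A\|_2^2 \int w(v)\,dv$, from which the claimed $\bar\varkappa_{\bar w}^2 = 2\int w(v)\,dv$ is read off.

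For the lower bound, I would expand the square to
\[
  \|A\|_{\bar w}^2 = \|A_r\|_w^2 + \|A_s\|_w^2 + 2\int_{\R^d}\langle\Theta(u),A_r\rangle\langle\Theta(u),A_s\rangle w_U(u)\,du,
\]
and apply Lemma~\ref{lem:rip} to each pure-square term, which bounds it below by $\underline\varkappa_w^2\|\cdot\|_2^2$. Because $\Theta(u)=uu^\top/|u|^2$ is positive semi-definite, both factors in the cross-term integrand are non-negative whenever $A_r, A_s$ are PSD, so the cross term is non-negative and can simply be dropped, leaving $\|A\|_{\bar w}^2 \ge \underline\varkappa_w^2\|A\|_2^2$. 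The precise constant $\underline\varkappa_{\bar w}^2 = \min_{i,j \in \{1,2\}}\int v_i^2 v_j^2/|v|^4 w(v)\,dv$ then emerges by redoing, one level deeper, the fourth-moment computation underlying Lemma~\ref{lem:rip}: by the radial symmetry of $w$ the relevant moment tensor has only two distinct entries, corresponding to $i = j$ (the ``diagonal'' contribution of each block) and $i \ne j$ (the ``off-diagonal'' mixed contribution), and taking the pessimistic minimum gives the stated constant.

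The main obstacle is the cross term in the lower bound. Without positive semi-definiteness it can become negative and even dominate the two pure-square contributions (e.g.\ $A_s = -A_r$ makes $\|A\|_{\bar w}^2$ vanish while $\|A\|_2^2$ stays positive), so the PSD structure inherited from the estimator's constraints is essential for discarding it cleanly. Once that is in place, the moment-tensor bookkeeping that pins down $\underline\varkappa_{\bar w}^2$ as the minimum of the two fourth-moment integrals is routine and parallels the argument behind Lemma~\ref{lem:rip}.
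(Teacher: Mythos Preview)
Your approach is exactly the paper's: its proof is the single remark ``we easily deduce from Lemma~\ref{lem:rip} and $|\Theta(u)|_2=1$'', and your block decomposition together with Cauchy--Schwarz for the upper bound and the cross-term analysis for the lower bound is precisely how that deduction unfolds.

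You are also right to single out positive semi-definiteness, and you should push that observation further: your own counterexample $A_s=-A_r$ is a genuine counterexample to the lemma \emph{as stated}. Since $\langle\bar\Theta(u),A\rangle=\langle\Theta(u),A_r+A_s\rangle$, this choice gives $\|A\|_{\bar w}=0$ while $\|A\|_2^2=2\|A_r\|_2^2>0$, so the lower inequality fails for merely symmetric $A\in\mathbb D$. The hypothesis has to be strengthened to positive semi-definiteness of both blocks, which is consistent with Lemma~\ref{lem:rip}'s own hypothesis and with the estimator's constraints. With that correction your argument is complete. One small simplification: once you drop the non-negative cross term and apply Lemma~\ref{lem:rip} to each block you already obtain the constant $\underline\varkappa_w^2=\int v_1^4|v|^{-4}w(v)\,\d v$, which by Cauchy--Schwarz dominates $\underline\varkappa_{\bar w}^2=\min_{i,j\in\{1,2\}}\int v_i^2v_j^2|v|^{-4}w(v)\,\d v$; no separate ``one level deeper'' moment computation is needed to reach the stated (weaker) constant.
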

We recover the exact structure for which Theorem~\ref{thm:oracle2} has been proven and thus conclude:
\begin{prop}
  Suppose that $\bar{\mathbb{M}}\subset\mathbb D$ is a convex subset of the symmetric matrices. Define
  \begin{equation}
    \bar{\mathcal{R}}_n:=2\int_{\mathbb{R}^{d}}\Big(\frac2{|u|^2}\Re\hat{\psi}_{n}(u)-\langle\bar\Theta(u),\bar\Sigma\rangle\Big)\bar\Theta(u) w_{U}(u)\,\d u\in\mathbb D,\label{eq:barR}
  \end{equation}
  On the event $\{\|\bar{\mathcal R}_n\|_\infty\le \lambda\}$ for some $\lambda>0$ the estimator $\bar\Sigma_{n,\lambda}=\bar{\Sigma}_{r}+\bar{\Sigma}_{s}$ given in \eqref{eq:sigmabar} satisfies
  \begin{align*}
  &\big\|\diag\big(\bar\Sigma_r,\diag(\vec(\bar\Sigma_s))\big)-\bar\Sigma\big\|_{w}^{2}
  \le\inf_{M\in\bar{\mathbb M}}\left\{\|M-\bar\Sigma\|_{w}^{2}+(\tfrac{1+\sqrt 2}{2\underline\varkappa_{\bar w}})^2\lambda^2\operatorname{rank}(M)\right\}
  \end{align*}
  with $\underline\varkappa_{\bar w}$ from Lemma~\ref{lem:rip2}.
\end{prop}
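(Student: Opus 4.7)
The plan is to reduce the claim to Theorem~\ref{thm:oracle2}: the optimisation defining $\bar\Sigma_{n,\lambda}$ in \eqref{eq:sigmabar} has exactly the same formal structure as the one in \eqref{eq:sigmaTilde}, once we substitute $\tilde\Theta(u)\mapsto\bar\Theta(u)$, the target $\diag(\Sigma,\alpha U^{-2})\mapsto\bar\Sigma$, the error $\mathcal R_n\mapsto\bar{\mathcal R}_n$, the weighted scalar product $\langle\cdot,\cdot\rangle_w\mapsto\langle\cdot,\cdot\rangle_{\bar w}$ and the isometry Lemma~\ref{lem:rip}$\mapsto$ Lemma~\ref{lem:rip2}. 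I would first confirm that the proof of Theorem~\ref{thm:oracle2} uses positive semi-definiteness only through Lemma~\ref{lem:rip}, which is now replaced by its symmetric counterpart; no other step of that proof exploits positivity, so the argument carries over to the convex set $\bar{\mathbb M}\subset\mathbb D$ of symmetric matrices.

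Concretely, set $M^\ast:=\diag\big(\bar\Sigma_r,\diag(\vec(\bar\Sigma_s))\big)$. Expanding the square in \eqref{eq:sigmabar} and dropping constants independent of $M$ shows that $M^\ast$ minimises
\[
M\mapsto \|M-\bar\Sigma\|_{\bar w}^{2}-\langle\bar{\mathcal R}_n,M\rangle+\lambda\|M\|_1
\]
over $\bar{\mathbb M}$. Convexity of $\bar{\mathbb M}$ and a subgradient condition for $\|\cdot\|_1$ give, for every $M\in\bar{\mathbb M}$,
\[
2\langle M^\ast-\bar\Sigma,M^\ast-M\rangle_{\bar w}\le \langle\bar{\mathcal R}_n,M^\ast-M\rangle+\lambda\big(\|M\|_1-\|M^\ast\|_1\big).
\]
On the event $\{\|\bar{\mathcal R}_n\|_\infty\le\lambda\}$ trace duality yields $|\langle\bar{\mathcal R}_n,\cdot\rangle|\le\lambda\|\cdot\|_1$. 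Using the standard subgradient inequality $\|M+A\|_1\ge\|M\|_1+\langle U_MV_M^\top,A\rangle+\|P_M^\perp A\|_1$ (with $P_M^\perp$ the projection orthogonal to the column and row spaces of $M$) and the elementary bound $\|(\Id-P_M^\perp)A\|_1\le\sqrt{2\rank(M)}\,\|A\|_2$, one splits $M^\ast-M$ into a part controlled by $\rank(M)$ and a part absorbed by the penalty difference. Applying Lemma~\ref{lem:rip2} to convert $\|\cdot\|_2$ to $\|\cdot\|_{\bar w}$ and using $2ab\le \tfrac12 a^2+2b^2$ absorbs the cross term into $\|M^\ast-\bar\Sigma\|_{\bar w}^{2}$, producing the sharp constant $\big((1+\sqrt 2)/(2\underline\varkappa_{\bar w})\big)^{2}$.

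The only thing that genuinely needs checking is that the subgradient calculus for $\|\cdot\|_1$ is compatible with the block-diagonal structure of $\mathbb D$: since the singular values of $\diag(\Sigma_r,\diag(\vec(\Sigma_s)))$ are the union of the singular values of $\Sigma_r$ and the numbers $|(\Sigma_s)_{ij}|$, one has $\|M\|_1=\|\Sigma_r\|_1+|\Sigma_s|_1$ and $\rank(M)=\rank(\Sigma_r)+|\Sigma_s|_0$, and the subgradient decomposes accordingly into the sign matrix of the SVD of $\Sigma_r$ and the sign matrix $\diag(\vec(\operatorname{sign}(\Sigma_s)))$. Thus the $\ell_1$-penalty on the sparse block is handled automatically by the same nuclear norm argument, and no further work is required. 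Hence the analogue of Theorem~\ref{thm:oracle2} follows with the claimed constant.
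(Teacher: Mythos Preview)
Your proposal is correct and matches the paper's own argument exactly: the paper states the proposition immediately after Lemma~\ref{lem:rip2} with the single sentence ``We recover the exact structure for which Theorem~\ref{thm:oracle2} has been proven and thus conclude,'' i.e.\ the result is obtained by rerunning the proof of Theorem~\ref{thm:oracle2} with the substitutions $\tilde\Theta\mapsto\bar\Theta$, $\bar\Sigma$ as target, $\bar{\mathcal R}_n$ as error, and Lemma~\ref{lem:rip2} in place of Lemma~\ref{lem:rip}. Your additional remarks---that positive semi-definiteness enters only through the isometry lemma, and that the nuclear-norm subgradient calculus is compatible with the block-diagonal structure of $\mathbb D$---are correct sanity checks that the paper leaves implicit.
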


Noting $\|\bar\Sigma\|_2^2=\|\Sigma_r\|_2^2+\|\Sigma_s\|_2^2\ge\frac12\|\Sigma\|_2^2$ and using Lemma~\ref{lem:rip2}, we obtain an oracle inequality for the estimation error of $\bar\Sigma_{n,\lambda}$ in the Frobenius norm. The extension from $q=0$ to $q\in(0,2)$ can be proven as Corollary~2 by \citet{rigolletTsybakov2012}. 
\begin{thm}
   Let $\bar{\mathbb{M}}\subset\mathbb D$ is a convex subset of the symmetric matrices. On the event $\{\|\bar{\mathcal R}_n\|_\infty\le \lambda\}$ for some $\lambda>0$ and for $\bar{\mathcal R}_n$ from \eqref{eq:barR} we have for a constant $C_w$ depending only on $\underline\varkappa_{\bar w}$ and $\bar\varkappa_{\bar w}$:
  \begin{align*}
  \|\bar\Sigma_{n,\lambda}-\Sigma\|_2^2
  &\le 2\big(\|\bar{\Sigma}_{r}-\Sigma_r\|^2_{2}+\|\bar\Sigma_s-\Sigma_s\|^2_2\big)\\
  &\le C_w\inf_{M_r,M_s}\Big\{\|M_r-\Sigma_r\|_2^{2}+\|M_s-\Sigma_s\|_2^2+\lambda^2\big(\rank(M_r)+|M_s|_0\big)\Big\}
  \end{align*}
  where the infimum is taken over all $M_r,M_s$ such that $\diag(M_r,\diag(\vec(M_s)))\in\bar{\mathbb M}$. Moreover, for another constant $C'_w$ and any $q\in(0,2)$ we have
  \begin{align*}
  \|\bar\Sigma_{n,\lambda}-\Sigma\|_2^2
  &\le 2\big(\|\bar{\Sigma}_{r}-\Sigma_r\|^2_{2}+\|\bar\Sigma_s-\Sigma_s\|^2_2\big)\\
  &\le C'_w\inf_{M_r,M_s}\Big\{\|M_r-\Sigma_r\|_2^{2}+\|M_s-\Sigma_s\|_2^2+\lambda^2\rank(M_r)+\lambda^{2-q}|M_s|^q_q\Big\}.
  \end{align*}
\end{thm}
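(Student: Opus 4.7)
The plan is to obtain the first oracle inequality directly from the preceding proposition combined with the norm equivalence of Lemma~\ref{lem:rip2}, and then to deduce the second one by a standard hard-thresholding trick in the spirit of Corollary~2 of Rigollet and Tsybakov.

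For the first bound, I would begin with the elementary triangle inequality
\[
\|\bar\Sigma_{n,\lambda}-\Sigma\|_2^2=\|(\bar\Sigma_r-\Sigma_r)+(\bar\Sigma_s-\Sigma_s)\|_2^2\le 2\|\bar\Sigma_r-\Sigma_r\|_2^2+2\|\bar\Sigma_s-\Sigma_s\|_2^2,
\]
which is already the leftmost estimate claimed in the theorem. The key structural fact is that for any $A_r,A_s\in\R^{d\times d}$ the Frobenius norm of the block-diagonal embedding decouples,
\[
\|\diag(A_r,\diag(\vec(A_s)))\|_2^2=\|A_r\|_2^2+\|\vec(A_s)\|_2^2=\|A_r\|_2^2+\|A_s\|_2^2.
\]
Applied to $A_r=\bar\Sigma_r-\Sigma_r$ and $A_s=\bar\Sigma_s-\Sigma_s$, the lower bound of Lemma~\ref{lem:rip2} converts the right-hand side above into the weighted norm already controlled by the preceding proposition. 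On the oracle side, any competitor $M=\diag(M_r,\diag(\vec(M_s)))\in\bar{\mathbb M}$ satisfies $\|M-\bar\Sigma\|_{\bar w}^2\le\bar\varkappa_{\bar w}^2(\|M_r-\Sigma_r\|_2^2+\|M_s-\Sigma_s\|_2^2)$ by the upper estimate of the same lemma, and the block-diagonal structure gives $\rank(M)=\rank(M_r)+|\vec(M_s)|_0=\rank(M_r)+|M_s|_0$. Collecting the constants yields the first claim with $C_w$ depending only on $\underline\varkappa_{\bar w}$ and $\bar\varkappa_{\bar w}$.

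For the extension to $q\in(0,2)$ I would apply the just-proven $q=0$ bound to a hard-thresholded competitor. Given $M_s$ and a threshold $\tau>0$, define $M_s^\tau$ by keeping the entries of $M_s$ whose absolute value exceeds $\tau$ and zeroing out the rest. Standard bookkeeping yields
\[
|M_s^\tau|_0\le\tau^{-q}|M_s|_q^q\quad\text{and}\quad\|M_s^\tau-M_s\|_2^2\le\tau^{2-q}|M_s|_q^q.
\]
Plugging the competitor $(M_r,M_s^\tau)$ into the first oracle inequality, splitting $\|M_s^\tau-\Sigma_s\|_2^2\le 2\|M_s^\tau-M_s\|_2^2+2\|M_s-\Sigma_s\|_2^2$, and choosing $\tau$ proportional to $\lambda$ so that $\lambda^2|M_s^\tau|_0+\|M_s^\tau-M_s\|_2^2\lesssim\lambda^{2-q}|M_s|_q^q$, one obtains the second claim after taking the infimum over $M_r$ and $M_s$.

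The main obstacle, if one can call it that, is notational rather than conceptual: one must verify carefully that $\rank(\cdot)$, the $\ell_0$-count, the Frobenius norm and the weighted norm $\|\cdot\|_{\bar w}$ all decouple cleanly across the two diagonal blocks of $\mathbb D$. Once that is in place the first inequality is an immediate consequence of the preceding proposition and Lemma~\ref{lem:rip2}, and the second one follows from the truncation argument above, without any further concentration or convex-analytic input being required.
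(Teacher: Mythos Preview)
Your proposal is correct and follows exactly the route sketched in the paper: the first inequality comes from combining the preceding proposition with the norm equivalence of Lemma~\ref{lem:rip2} and the block-diagonal decoupling $\|\diag(A_r,\diag(\vec(A_s)))\|_2^2=\|A_r\|_2^2+\|A_s\|_2^2$, while the extension to $q\in(0,2)$ is the hard-thresholding argument that the paper delegates to Corollary~2 of Rigollet and Tsybakov. You have simply made explicit what the paper leaves as a two-line reference.
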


The analysis of the error term $\bar{\mathcal R}_n$ can be done as in Section~\ref{seq: mcr} noting that $\bar\Theta(u)$ has very similar properties as $\Theta(u)$, especially $\|\bar\Theta(u)\|_\infty\le\|\Theta(u)\|_\infty+|\Theta(u)|_{\infty}\le2$ and $\|\bar\Theta(u)\|_2=2\|\Theta(u)\|_2$. We omit the details.

\subsection{Incorporating positive definiteness constraints }

Consider the optimisation problem 
\begin{equation}
M_{n}:=\arginf_{M\succeq0,\, M\in\R^{d\times d}}Q_{n}(M)
\quad\text{with}\quad
Q_{n}(M)=\int_{\mathbb{R}^{d}}\Big(\frac{2}{|u|^2}\Re\hat\psi_n(u)-\langle\Theta(u),M\rangle\Big)^{2}w_{U}(u)\d u.\label{eq:optim_gen}
\end{equation}
In general, the solution of \eqref{eq:optim_gen} has to be searched
in a space of dimension $O(d^{2})$. Solving such a problem becomes
rapidly intractable for large $d$. In order to solve \eqref{eq:optim_gen}
at a reduced computational cost, we assume that $\rank(\Sigma)\le k\ll d.$
In order to handle the positive definiteness constraint, we let $\ensuremath{M=LL^{\top}}$ with
$\ensuremath{L\in\mathbb{R}^{k\times d}}$ and rewrite the problem
\eqref{eq:optim_gen} as

\begin{equation}
L_{n}:=\arginf_{L\in\mathbb{R}^{k\times d}}Q_{n}(LL^{\top}).\label{eq:optim_gen_l}
\end{equation}
In fact any local minima of \eqref{eq:optim_gen_l} leads to a local
minima of \eqref{eq:optim_gen}. Since any local minima is a global
minima for the convex minimization problem \eqref{eq:optim_gen},
any local minima of \eqref{eq:optim_gen_l} is a global minima.

\subsection{A related regression problem}\label{sec:reg}

In order to understand the dimension effect that we have observed in the convergence rates in Corollary~\ref{cor:mildly}, we will take a more abstract point of view considering a related regression-type problem in this section. Motivated from the regression formula \eqref{eq:regression}, we study the estimation of a possibly low-rank matrix $\Sigma\in\R^{d\times d},\Sigma>0$, based on the observations
\begin{equation}\label{eq:regModel}
X_{i}(u)=\langle u,\Sigma u\rangle+\Psi(u)+\epsilon_{i}(u),\quad i=1,\dots,n,u\in\R^{d},
\end{equation}
where $\Psi\colon\R^{d}\to\R$ is an unknown deterministic nuisance function satisfying $|\Psi(u)|\to0$ as $|u|\to\infty$ and $\varepsilon_{i}=(\epsilon_{i}(u))_{u\in\R^{d}}$ are centred, bounded random variables such that:
\begin{equation}
\epsilon_{1},\dots,\epsilon_{n}\mbox{ are i.i.d. with}\quad\E[\epsilon_{i}(u)\epsilon_{i}(v)]=(|u|^{\beta}|v|^{\beta}\vee1)\phi(u-v),\quad u,v\in\R^{d}, i\in\{1,\dots,n\},\label{eq:eps}
\end{equation}
for some symmetric function $\phi\in L^{1}(\R^{d})$ and $\beta\ge0$.
The covariance structure of $\epsilon_{i}(u)$ includes on the one
hand that the variance of $\epsilon_{i}(u)$ increases (polynomially)
as $|u|\to\infty$, implying some ill-posedness, and on the other
hand the random variables $\epsilon_{i}(u)$ and $\epsilon_{i}(v)$
decorrelate as the distance of $u$ and $v$ increases (supposing
that $\phi$ is a reasonable regular function).

Following our weighted Lasso approach, we define the statistic 
\[
T_{n}(u):=\frac{1}{n}\sum_{i=1}^{n}X_{i}(u),\quad u\in\R^{d},
\]
and introduce the weighted least squares estimator
\begin{equation}\label{eq:regEst}
\hat{\Sigma}_{n,\lambda}:=\argmin_{M\in\mathbb{M}}\int_{\R^{d}}\big(T_{n}(u)-\langle u,M u\rangle\big)^{2}w_{U}(u)\,\d u+\lambda\|M\|_{1}
\end{equation}
for a convex subset $\mathbb{M}\subset\{A\in\R^{d\times d}:A>0\}$,
a weight function $w_{U}(u):=w(u/U),u\in\R^{d},$ for some radial
function $w\colon\R^{d}\to\R_{+}$ with support $\supp w\subset\{\frac12\le |u|\le1\}$
and a penalisation parameter $\lambda>0$.

\vspace*{1em}

Our oracle inequalities easily carry over to the regression setting.
We define the weighted scalar product and corresponding (semi-)norm
\[
\langle A,B\rangle_{U}:=\int_{\R^{d}}\langle u,Au\rangle\langle u,Bu\rangle w_{U}(u)\d u\quad\mbox{and}\quad\|A\|_{U}^{2}:=\langle A,A\rangle_{U}
\]
as well as the error matrix
\[
\mathcal{R}_{n}:=\int_{\R^{d}}\big(T_{n}(u)-\langle u,\Sigma u\rangle\big)uu^{\top}w_{U}(u)\d u\in\mathbb R^{d\times d}.
\]
Along the lines of Proposition~\ref{prop:oracle1} we obtain
\[
\|\hat{\Sigma}_{n,\lambda}-\Sigma\|_{U}^{2}\le\inf_{M\in\mathbb{M}}\big\{\|M-\Sigma\|_{U}^{2}+2\lambda\|M\|_{1}\big\}\quad\text{on the event }\{\|\mathcal{R}_{n}\|_{\infty}\le\lambda\}.
\]
It is now crucial to compare the weighted norm $\|\cdot\|_{U}$ with
the Frobenius norm $\|\cdot\|_{2}$. Analogously to Lemma~\ref{lem:rip}, we
have for any positive definite matrix $A=QDQ^{\top}\in\R^{d\times d}$
with diagonal matrix $D$ and orthogonal matrix $Q$ that
\begin{align}
\|A\|_{U}^{2} & =\int_{\R^{d}}\langle Q^{\top}u,DQ^{\top}u\rangle^{2}w_{U}(u)\,\d u\nonumber \\
 & =U^{d+4}\int_{\R^{d}}\langle v,Dv\rangle^{2}w(v)\,\d v\ge U^{d+4}\|A\|_{2}^{2}\underbrace{\int v_{1}^{4}w(v)\d v}_{=:\varkappa^2}.\label{eq:norms}
\end{align}
Hence, compared to the Frobenius norm the weighted norm becomes stronger as $U$ and $d$ increase. In the well specified case $\Sigma\in\mathbb{M}$ we conclude
\[
\|\hat{\Sigma}_{n,\lambda}-\Sigma\|_{2}\le U^{-d/2-2}\varkappa\sqrt{2\lambda\|\Sigma\|_{1}}\quad\mbox{on the event }\{\|\mathcal{R}_{n}\|_{\infty}\le\lambda\}.
\]
Theorem~\ref{thm:oracle2} corresponds to the following stronger inequality which can be proved analogously. Because $w$ is not normalised in the sense of inequality (\ref{eq:norms}), we have to be a bit careful in the very last step in Section~\ref{sec:proofOracle} in order not to oversee a factor $U^{-d/2-2}$.

\begin{thm}
  In the regression model \eqref{eq:regModel} with $\Sigma>0$ the estimator $\hat\Sigma_{n,\lambda}$ from \eqref{eq:regEst} satisfies
  \begin{equation}
    \|\hat{\Sigma}_{n,\lambda}-\Sigma\|_{2}\le CU^{-d-4}\lambda\sqrt{\operatorname{rank}\Sigma}\quad\mbox{on the event }\{\|\mathcal{R}_{n}\|_{\infty}\le\lambda\}\label{eq:oracle}
  \end{equation}
  for some numerical constant $C>0$.   
\end{thm}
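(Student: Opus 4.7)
My plan is to first establish the weighted-norm oracle inequality that is the direct analogue of Theorem~\ref{thm:oracle2} in this regression setting, and then convert the resulting bound into a Frobenius-norm bound using the comparison~\eqref{eq:norms}. The target intermediate inequality, valid on $\{\|\mathcal R_n\|_\infty\le\lambda\}$, is
\begin{equation*}
\|\hat\Sigma_{n,\lambda}-\Sigma\|_U^{2}\le\inf_{M\in\mathbb M}\Big\{\|M-\Sigma\|_U^{2}+C_0\lambda^{2}\operatorname{rank}(M)\Big\}
\end{equation*}
for a numerical constant $C_0$, which is exactly what the text announces. Applied at the oracle point $M=\Sigma\in\mathbb M$ (the well-specified case) the approximation term vanishes and one is left with $\|\hat\Sigma_{n,\lambda}-\Sigma\|_U^{2}\le C_0\lambda^{2}\operatorname{rank}\Sigma$.

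To obtain the oracle inequality I would mimic the proof of Theorem~\ref{thm:oracle2} line by line, replacing $\langle\cdot,\cdot\rangle_w$ with $\langle\cdot,\cdot\rangle_U$ and $\tilde\Theta(u)$ with $uu^{\top}$. Concretely, let $G$ be the self-adjoint operator $G(A):=\int_{\R^d}\langle u,Au\rangle uu^{\top}w_U(u)\,\d u$, so that $\langle G(A),B\rangle=\langle A,B\rangle_U$. The gradient of the quadratic part of the objective at $\hat\Sigma_{n,\lambda}$ is $2G(\hat\Sigma_{n,\lambda}-\Sigma)-2\mathcal R_n$, and the first-order optimality condition for the convex problem~\eqref{eq:regEst} yields a subgradient $\hat V\in\partial\|\hat\Sigma_{n,\lambda}\|_1$ with
\begin{equation*}
\langle\hat\Sigma_{n,\lambda}-\Sigma,\hat\Sigma_{n,\lambda}-M\rangle_U+\tfrac{\lambda}{2}\langle\hat V,\hat\Sigma_{n,\lambda}-M\rangle\le\langle\mathcal R_n,M-\hat\Sigma_{n,\lambda}\rangle,\qquad M\in\mathbb M.
\end{equation*}
The standard nuclear-norm subgradient decomposition (projecting onto the column/row space of $M$ and its orthogonal complement), trace duality $|\langle\mathcal R_n,\cdot\rangle|\le\|\mathcal R_n\|_\infty\|\cdot\|_1$, and the bound $\|\mathcal R_n\|_\infty\le\lambda$ then reproduce verbatim the computation of Koltchinskii~\cite{koltchinskii2011nuclear} that underlies Theorem~\ref{thm:oracle2}, and no further ingredient is needed at this stage since the entire argument lives inside the geometry of $\|\cdot\|_U$.

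The final step is the conversion to the Frobenius norm by means of~\eqref{eq:norms}, which reads $\|A\|_U^{2}\ge U^{d+4}\varkappa^{2}\|A\|_2^{2}$ for any positive definite $A$; taking square roots yields
\begin{equation*}
\|\hat\Sigma_{n,\lambda}-\Sigma\|_2\le \frac{\sqrt{C_0}}{\varkappa}\,U^{-d/2-2}\lambda\sqrt{\operatorname{rank}\Sigma},
\end{equation*}
giving the announced bound with $C=\sqrt{C_0}/\varkappa$. The only genuine obstacle, and the reason the text warns against overlooking the factor $U^{-d/2-2}$, is bookkeeping: because the weight $w_U(u)=w(u/U)$ in this section is not normalized by $U^{-d}$ (in contrast to Assumption~\ref{ass:weight}), both sides of the oracle inequality carry extra $U$-factors that must be tracked carefully; once the norm comparison~\eqref{eq:norms} is invoked at the very end, they combine to produce the stated power of $U$, and the remainder of the argument is just the convex-analytic manipulation already used to establish Theorem~\ref{thm:oracle2}.
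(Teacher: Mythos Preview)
Your outline contains a genuine gap in the bookkeeping, and as a result your final bound carries the wrong power of $U$: you obtain $U^{-d/2-2}$ whereas the theorem claims $U^{-d-4}$.

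The slip occurs in your intermediate oracle inequality
\[
\|\hat\Sigma_{n,\lambda}-\Sigma\|_U^{2}\le\inf_{M\in\mathbb M}\big\{\|M-\Sigma\|_U^{2}+C_0\lambda^{2}\operatorname{rank}(M)\big\}
\]
with $C_0$ a \emph{numerical} constant. This is not what the Koltchinskii argument actually yields here. The argument does \emph{not} live entirely inside the geometry of $\|\cdot\|_U$: the Cauchy--Schwarz step produces the Frobenius norm,
\[
\|\pi_M(\hat\Sigma_{n,\lambda}-M)\|_1\le\sqrt{2\operatorname{rank}(M)}\,\|\hat\Sigma_{n,\lambda}-M\|_2,
\]
and to absorb $\|\hat\Sigma_{n,\lambda}-M\|_2$ into the term $\|\hat\Sigma_{n,\lambda}-M\|_U^{2}$ on the left via Young's inequality one must first invoke \eqref{eq:norms}, i.e.\ $\|A\|_2\le\varkappa^{-1}U^{-d/2-2}\|A\|_U$. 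This is precisely ``the very last step in Section~\ref{sec:proofOracle}'' that the text warns about. Doing so yields
\[
\|\hat\Sigma_{n,\lambda}-\Sigma\|_U^{2}\le\|M-\Sigma\|_U^{2}+C_0'\,U^{-d-4}\lambda^{2}\operatorname{rank}(M),
\]
so the constant in front of $\lambda^{2}\operatorname{rank}(M)$ carries the factor $U^{-d-4}$, not a pure numerical constant. Applying \eqref{eq:norms} once more at the end then gives
\[
\|\hat\Sigma_{n,\lambda}-\Sigma\|_2\le C\,U^{-d-4}\lambda\sqrt{\operatorname{rank}\Sigma},
\]
matching the statement. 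In short, the norm comparison has to be used \emph{twice} (once inside the oracle inequality, once at the final conversion), and you applied it only once.
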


Let us now estimate the spectral norm of the error matrix
\begin{equation}
\mathcal{R}_{n}=\frac{1}{n}\sum_{i=1}^{n}\underbrace{\int_{\R^{d}}\varepsilon_{i}(u)uu^{\top}w_{U}(u)\,\d u}_{=:Z_{i}}+\int_{\R^{d}}\Psi(u)uu^{\top}w_{U}(u)\,\d u.\label{eq:R}
\end{equation}
The second term is a deterministic error term which is bounded by
\begin{align*}
\Big\|\int_{\R^{d}}\Psi(u)uu^{\top}w_{U}(u)\Big\|_{\infty}\le & \int_{\R^{d}}|\Psi(u)||u|^{2}w_{U}(u)\d u\\
\le & \sup_{|u|\ge U/2}|\Psi(u)|U^{d+2}\int_{\R^{d}}|v|^{2}w(v)\d v,
\end{align*}
using $\|uu^{\top}\|_{\infty}=|u|^{2},u\in\R^{d}$. The dimension occurs here as a consequence of how the regularity
(decay of $\Psi$) is measured in this problem. For the first term
in (\ref{eq:R}) we apply, for instance, non-commutative Bernstein
inequality noting that $Z_{1},\dots,Z_{n}$ are i.i.d., bounded and
centred. The Cauchy-Schwarz inequality, or more precisely the estimate \eqref{eq:CStrick}, yields
\begin{align*}
\|\E[Z_{1}Z_{1}]\|_{\infty}=\|\E[Z_{1}Z_{1}^{\top}]\|_{\infty} & =\Big\|\int_{\R^{d}}\int_{\R^{d}}(|u|^{\beta}|v|^{\beta}\vee1)\phi(u-v)(uu^{\top})(vv^{\top})w_{U}(u)w_{U}(v)\d u\d v\Big\|_{\infty}\\
 & \le\int_{\R^{d}}\int_{\R^{d}}|\phi(u-v)||u|^{2+\beta}|v|^{2+\beta}w_{U}(u)w_{U}(v)\d u\d v\\
 & \le\int_{\R^{d}}|u|^{4+2\beta}w_{U}^{2}(u)\int_{\R^{d}}|\phi(u-v)|\d v\d u\\
 & =\|\phi\|_{L^{1}}U^{4+2\beta+d}\int_{\R^{d}}|v|^{4+2\beta}w^{2}(v)\,\d v.
\end{align*}
Note that the dependence of the variance on $d$ and $\beta$ is the usual
non-parametric behaviour. Therefore,
\[
\|\mathcal{R}_{n}\|_{\infty}=\mathcal{O}_{P}\big(n^{-1/2}U^{2+\beta+d/2}+\sup_{|u|\ge U/2}|\Psi(u)|U^{d+2}\big).
\]
We thus choose $\lambda\sim n^{-1/2}U^{2+\beta+d/2}+\sup_{|u|\ge U/2}|\Psi(u)|U^{d+2}$
and conclude from (\ref{eq:oracle})
\begin{equation}
\|\hat{\Sigma}_{n,\lambda}-\Sigma\|_{2}\lesssim\Big(n^{-1/2}U^{\beta-2-d/2}+\sup_{|u|\ge U/2}|\Psi(u)|U^{-2}\Big)\sqrt{\operatorname{rank}\Sigma}.\label{eq:regressionRate}
\end{equation}
If $\beta>2+d/2$, then we face a bias-variance trade-off. Supposing
$|\Psi(u)|\lesssim|u|^{-\alpha}$ for some $\alpha>0$, we may choose
the optimal $U^{*}=n^{1/(2\alpha+2\beta-d)}$ and obtain the rate
\[
\|\hat{\Sigma}_{n,\lambda}-\Sigma\|_{2}\lesssim n^{-(\alpha+2)/(2\alpha+2\beta-d)}.
\]
This rate coincides with our findings in Corollary~\ref{cor:mildly}. Note that the second regime in the mildly ill-posed case comes from the auto-deconvolution structure of the L\'evy process setting which is not represented in the regression model \eqref{eq:regModel}. 

From this analysis we see that root for the surprising dimension dependence
is the factor $U^{d}$ in the isometry property 
(\ref{eq:norms}). Intuitively, it reflects that we ``observe'' more frequencies in the annulus $\{u\in\R^{d}:\frac{U}{2}\le|u|\le U\}$ as $d$ increases and, due to the function $\phi$ in \eqref{eq:eps}, we can profit from these since the observations decorrelate if the frequencies have a large distance from each other.   

\section{Simulations}
\label{eq:sim}
\subsection{Known time change}
\label{sec:tc_known}
Let us analyse a model based on a time-changed normal inverse Gaussian (NIG) L\'evy process.
The NIG L\'evy processes, characterised by their increments being NIG distributed, have been introduced in \cite{barndorff1997processes} as a model for log returns of stock prices. \citet{barndorff1997processes} considered classes of normal variance-mean mixtures and defined the NIG distribution as the case when the
mixing distribution is inverse Gaussian. Shortly after its introduction it was shown that the NIG distribution fits very well the log returns on German stock market data, making the NIG L\'evy processes  of great interest for practioneers.  

An NIG distribution has in general the four parameters \( \alpha\in \mathbb{R}_{+}, \) \( \beta\in \mathbb{R}, \) \( \delta\in \mathbb{R}_{+} \) and \( \mu\in \mathbb{R} \) with \( |\beta|<\alpha \) having  different effects on the shape of the distribution: \( \alpha \) is responsible for the tail heaviness of steepness, \( \beta \) affects symmetry, \( \delta \) scales the distribution and \( \mu \) determines its mean value. The NIG distribution is infinitely divisible with the characteristic function
\begin{eqnarray*}
\phi(u)=\exp\left\{ \delta\left( \sqrt{\alpha^{2}-\beta^{2}}-\sqrt{\alpha^{2}-(\beta+i u)^{2}}+i \mu u \right) \right\}.
\end{eqnarray*}
Therefore one can define the NIG L\'evy process \( (X_{t})_{t\geq 0} \) which
starts at zero and has independent and stationary increments such that each increment \( X_{t+\Delta}-X_{t} \) is \( \operatorname{NIG}(\alpha, \beta, \Delta\delta,\Delta\mu) \)  distributed. The NIG process has no diffusion component making it a pure jump
process with the L\'evy density
\begin{eqnarray}
    \label{NIG_NU}
    \nu(x)&=&\frac{2\alpha\delta}{\pi}\frac{\exp(\beta x)K_{1}(\alpha |x|)}{|x|}
\end{eqnarray}
where \( K_{1}(z) \) is the modified Bessel function of the third kind. Taking into account the asymptotic relations
\begin{eqnarray*}
    K_{1}(z)\asymp 2/z, \quad z\to +0 \mbox{ and }  K_{1}(z)\asymp \sqrt{\frac{\pi}{2z}} e^{-z}, \quad z\to +\infty,
\end{eqnarray*}
we conclude that   Assumption B  is fulfilled for  \( s=-1 \) and any \(p>0.\)

A Gamma process is used for the time change \( \mathcal{T} \) which is a L\'evy process such that its increments have a Gamma distribution, so that
\( \mathcal{T} \) is a pure-jump increasing L\'evy process with L\'evy density
\begin{eqnarray*}
    \nu_{\mathcal{T}}(x)=\theta x^{-1}\exp(-\eta x), \quad x\ge 0,
\end{eqnarray*}
where the parameter \( \theta  \) controls the rate of jump arrivals and the scaling parameter \( \eta  \) inversely controls the jump size.
The Laplace transform of \( \mathcal{T} \) is of the form
\begin{eqnarray*}
\mathscr{L}_{t}(z)=\E[\exp(-z\mathcal{T}(t))] =(1+z/\eta)^{-\theta t}, \quad \Re z\geq 0.
\end{eqnarray*}
It follows from the properties of the Gamma distribution that Assumption C (with (iv)) is fulfilled for the Gamma process \( \mathcal{T} \) for any \(p>0.\)
Introduce a time-changed L\'evy process 
\[ 
Y_{t}=\Sigma^{1/2} W_{\mathcal{T}(t)}+L_{\mathcal{T}(t)}, \quad t\geq 0,
\]
where \(W_t\) is a \(10\)-dimensional Wiener process, \(\Sigma\) is a \(10\times10\) positive semi-definite matrix, \( L_{t}=(L^{1}_{t},\ldots, L^{10}_{t}),\) \(t\geq0,\) is a \(10\)-dimensional L\'evy process with independent NIG components and \( \mathcal{T} \) is a Gamma process. Note that the process \( Y_{t} \) is a multi-dimensional L\'evy process since \( \mathcal{T} \) was itself a L\'evy process (subordinator). Let us be more specific and set
\begin{eqnarray*}
\Sigma=O^\top \Lambda O,
\end{eqnarray*}
where \(\Lambda:=\operatorname{diag}(1,0.5,0.1,0,\ldots,0)\in \mathbb{R}^{10}\times \mathbb{R}^{10}\) and \(O\) is a randomly generated \(10\times 10\) orthogonal matrix. 
We choose \( \Delta=1  \) and \( \operatorname{NIG}(1, -0.1, 1,-0.1) \) distributed increments of the coordinate L\'evy processes $L^{j},j=1,\dots,10$. Take also \( \theta=1 \) and \( \eta=1 \) for the parameters of the Gamma process \( \mathcal{T}. \) 

Simulating a discretised trajectory \(Y_{0},Y_{1},\ldots,Y_{n}\) of the process \( (Y_{t}) \) of the length \(n\), we estimate the covariance matrix \(\Sigma\) with $\hat\Sigma_{n,\lambda}$  from \eqref{eq:sigmaHat}. We solve the convex optimisation problem
\begin{equation}
\hat{\Sigma}_{n,\lambda}:=\argmin_{M\in\{A\in\R^{10\times10}:A=A^\top\}}\left\{ \int_{\mathbb{R}^{d}}\big(2|u|^{-2}\Re\hat{\psi}_{n}(u)+\langle\Theta(u),M\rangle\big)^{2}w_{U}(u)\,\d u+\lambda\|M\|_{1}\right\}
\label{eq:optl1}
\end{equation}
where $\hat{\psi}_{n}(u):=-\mathscr{L}^{-1}(\phi_{n}(u))$ and $\phi_{n}(u)=\frac{1}{n}\sum_{j=1}^{n}e^{i\langle u,Y_{j}\rangle}$. The integral in \eqref{eq:optl1} is approximated by a Monte Carlo algorithm using \(70\) independent draws from the uniform distribution on \([-U,U]^d.\)
In order to ensure that the estimate \(\hat{\Sigma}_{n,\lambda}\) is a positive semi-definite
matrix, we compute  the nearest positive definite matrix \(\widetilde{\Sigma}_{n,\lambda}\) which approximates  \(\hat{\Sigma}_{n,\lambda}.\) To find such an approximation we use \(\textsf{R}\) package \textsf{Matrix} (function \textsf{nearPD}).
\begin{figure}[!tp]
\centering
\includegraphics[width=0.4\linewidth]{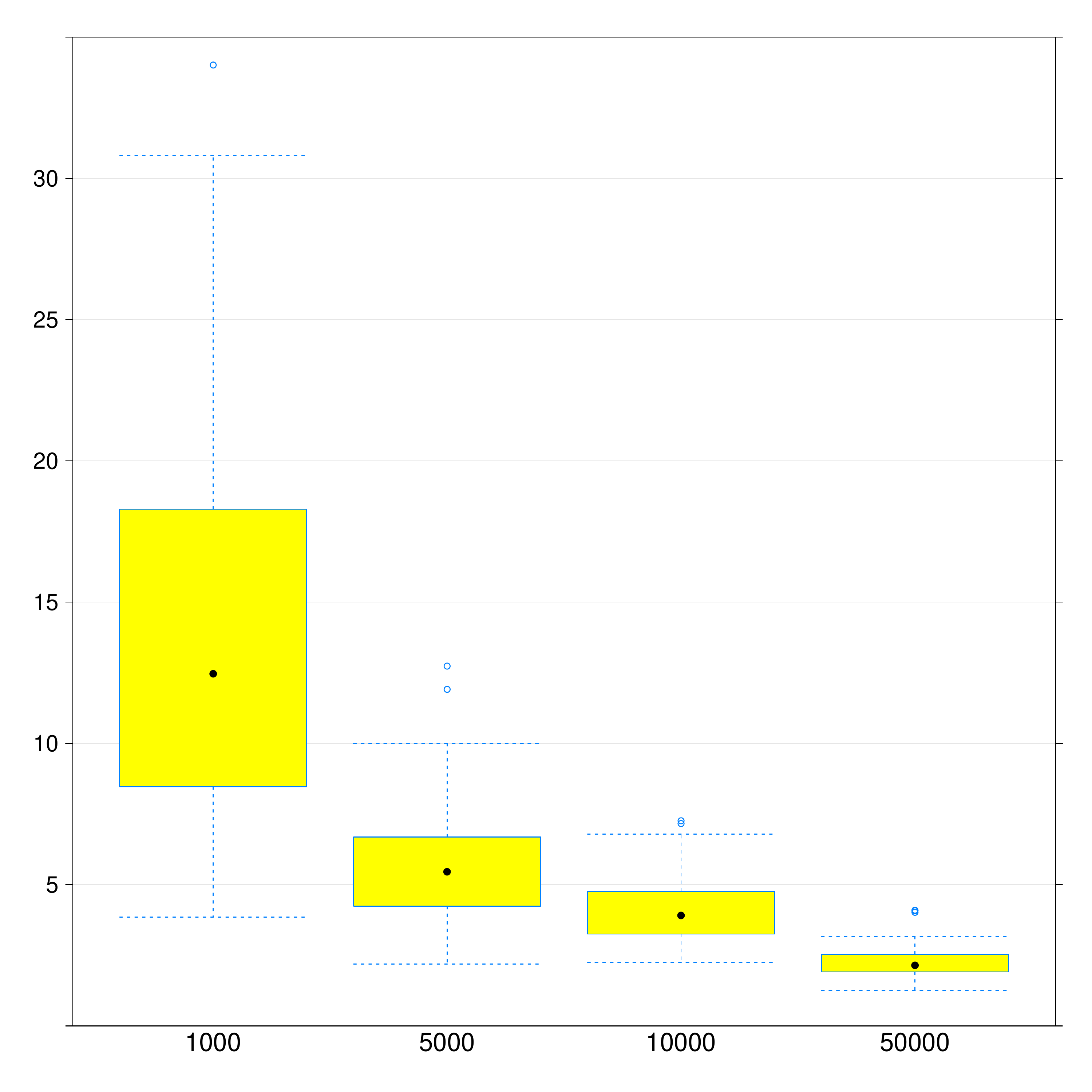}
~\includegraphics[width=0.4\linewidth]{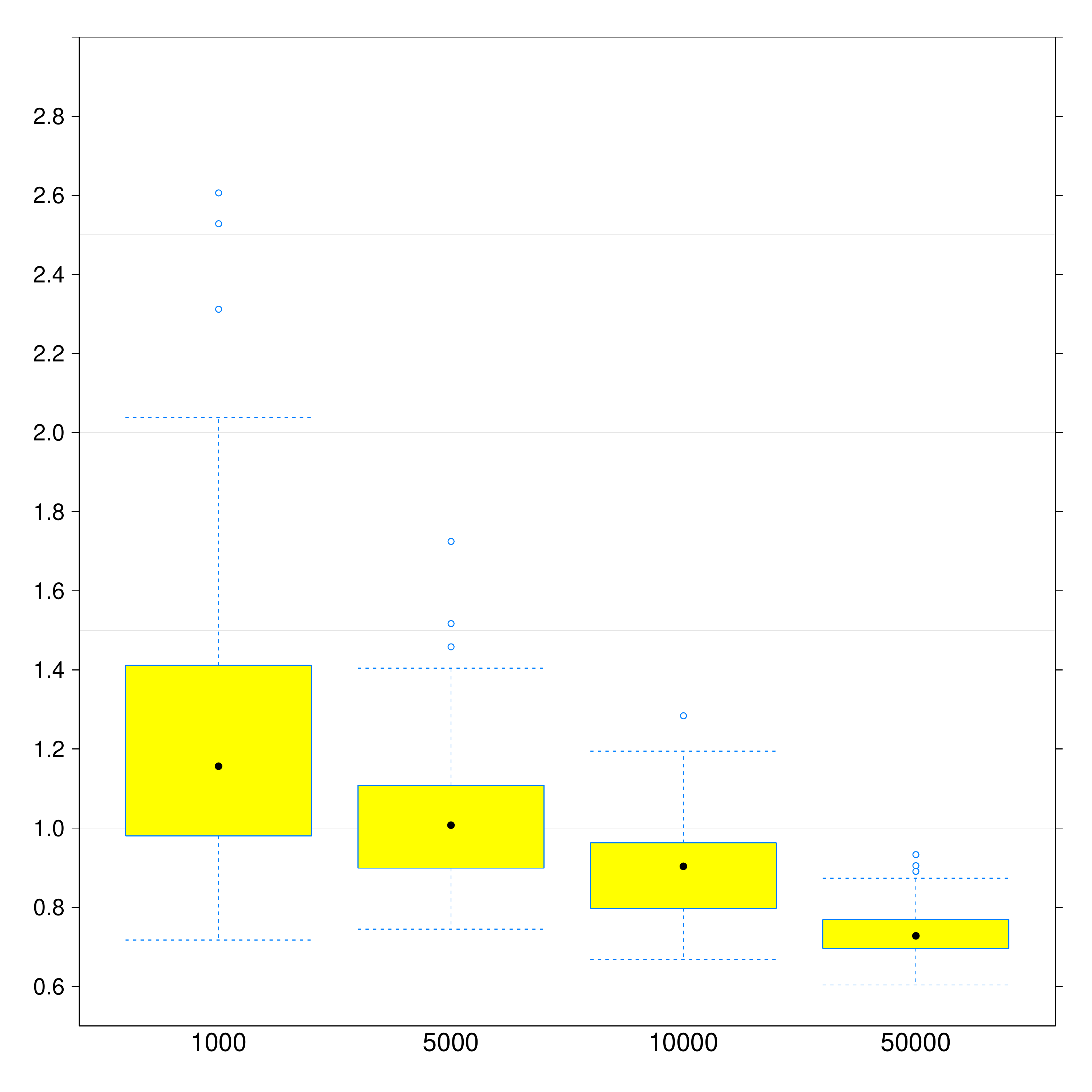}
 \caption{The relative estimation error \(\|\widetilde{\Sigma}_{n,\lambda}-\Sigma\|_{2}/\|\Sigma\|_2\) in dependence on \(n\in\{1000,5000,10000,50000\}\) for  \(\lambda\equiv 0\) (left) and \(\lambda_n=10^3/n\) (right).
\label{fig:gamma_change}}
\end{figure}
\begin{figure}[!tp]
\centering
\includegraphics[width=\linewidth]{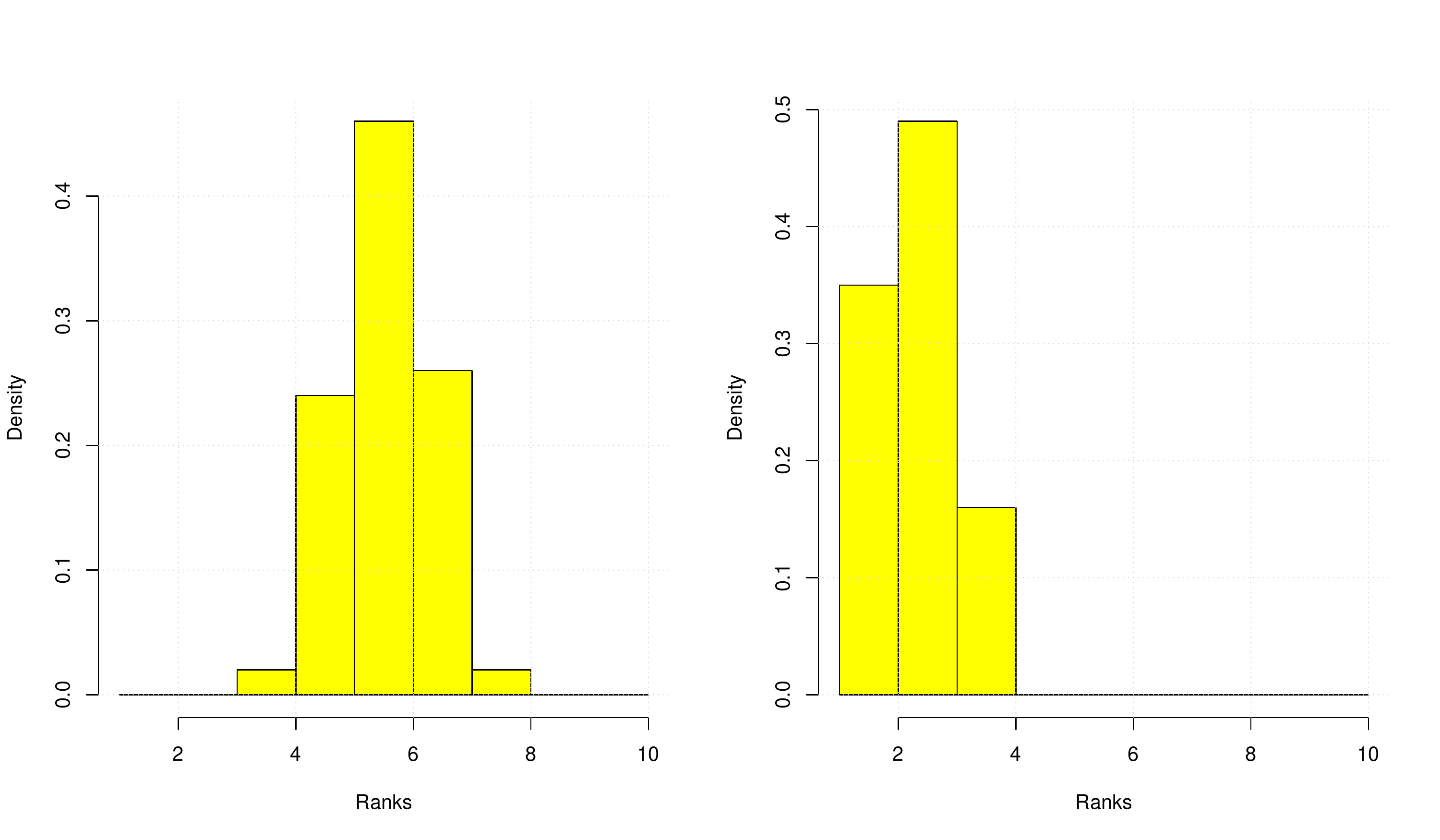}
 \caption{The histograms (over \(100\) simulation runs) of ranks of  the estimates \(\widetilde{\Sigma}_{n,\lambda}\) for  \(\lambda\equiv 0\) (left) and \(\lambda_n=0.2\) (right) in dimension \(d=10\) (the rank of \(\Sigma\) is \(3\)).
\label{fig: ranks}}
\end{figure}

We choose the tuning parameters according to the rule of  thumb \(U_n=0.7\cdot n^{1/4}.\) A data-driven selection procedure adapting to the unknown quantities $s$ and $\eta$ would be interesting, but is behond the scope of this paper. In view of the decompostion \eqref{eq:decomp} and our concentration inequalities for $\phi_n$, a model selection procedure or a Lepski method could be developed, cf. \cite{kappus2014} or, in the very similar deconvolution setting with unknown error distribution, \cite{dattnerEtAl2016}.

Figure~\ref{fig:gamma_change} shows box plots of the relative estimation error $\|\widetilde{\Sigma}_{n,\lambda}-\Sigma\|_{2}/\|\Sigma\|_2$ for sample sizes $n\in\{1000,5000,10000,50000\}$ based on 100 simulation iterations. As one can see,  the nuclear norm penalisation stabilises the estimates especially for smaller sample sizes. Without penalisation the approximated mean squared error is 5 to 10 times larger than the estimation error with penalisation choosing $\lambda=10^3/n$. 

Next we look at the ranks of estimated matrices \(\widetilde{\Sigma}_{n,\lambda}.\) The corresponding histograms (obtained over \(100\) simulation runs each consisting of \(10000\) observations) are presented in Figure~\ref{fig: ranks}, where the ranks were computed using the function \textsf{rankMatrix} from the package \textsf{Matrix} with tolerance for testing of ``practically zero'' equal to \(10^{-6}.\) As expected the ranks of the estimated matrices \(\widetilde{\Sigma}_{n,\lambda}\) are significantly lower in the case of nuclear norm penalization (\(\lambda>0\)) and concentrate around the true rank.

Let us now increase the dimension of the matrix \(\Sigma\) to \(100\) by adding zeros to  eigenvalues, i.e., we set \(\Sigma=O^\top \Lambda O\) with \(\Lambda:=\operatorname{diag}(1,0.5,0.1,0,\ldots,0)\in \mathbb{R}^{100}\times \mathbb{R}^{100}\) and randomly generated orthogonal matrix \(O\in \mathbb{R}^{100}\times \mathbb{R}^{100}.\) We take  \(100\)-dimensional time-changed L\'evy process with independent NIG components and run our estimation algorithm with \(n=20000\) and \(\lambda=5.\) The Figure~\ref{fig: ranks_100} shows the histograms of ranks based on \(100\) repetitions of the estimation procedure. The relative estimation error under an optimal choice of \(\lambda \) is of order \(0.7.\) 
\begin{figure}[!tp]
\centering
\includegraphics[width=\linewidth]{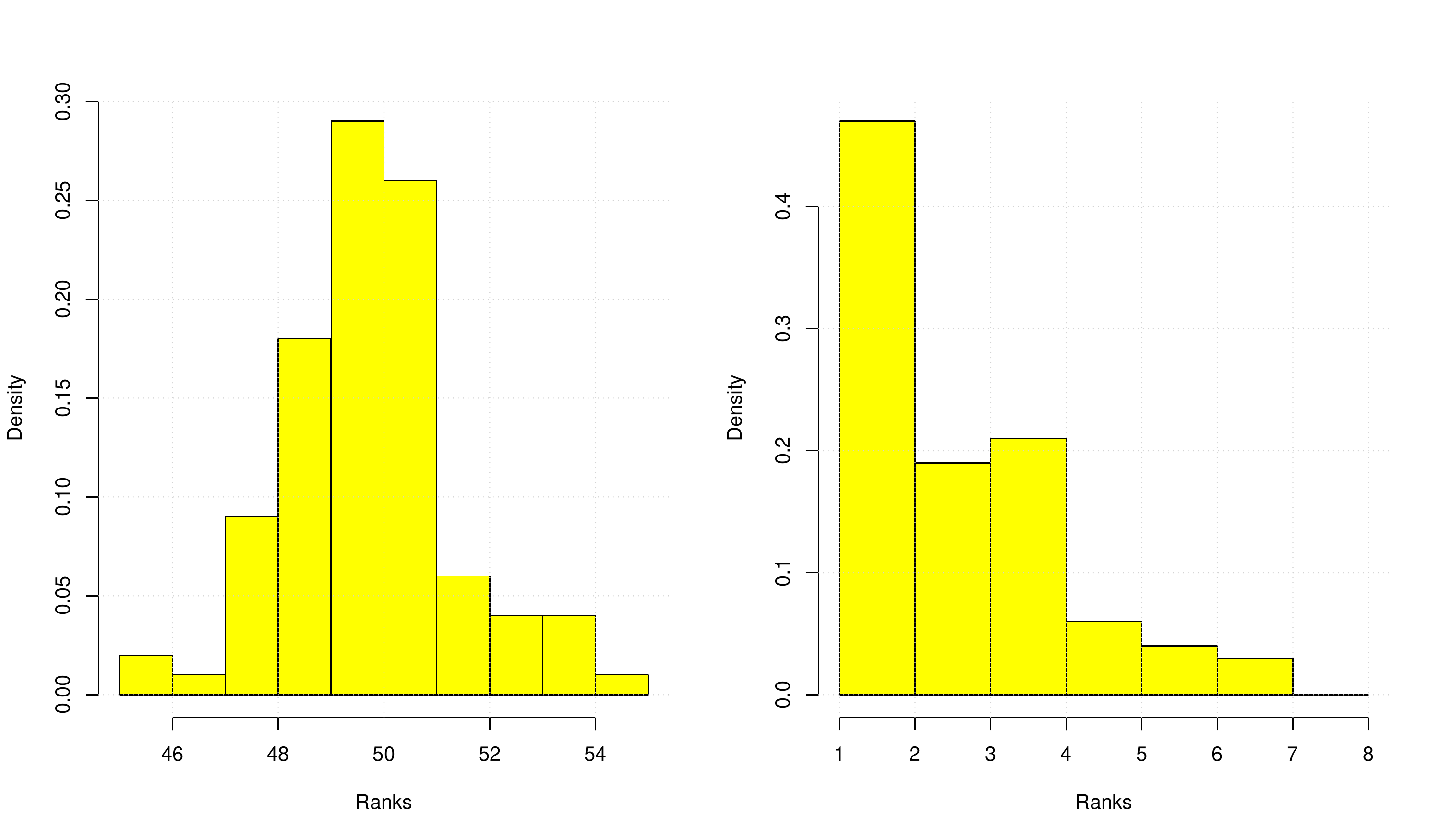}
 \caption{The histograms (over \(100\) simulation runs) of ranks of  the estimates \(\widetilde{\Sigma}_{n,\lambda}\) for  \(\lambda\equiv 0\) (left) and \(\lambda_n=10\) (right) in dimension \(d=100\) (the rank of \(\Sigma\) is \(3\)).
\label{fig: ranks_100}}
\end{figure}

\par

Let us now illustrate the importance of including the intercept \(a\) in the optimisation problem  \eqref{eq:sigmaTilde} for L\'evy processes with \(\alpha=\nu(\mathbb{R}^d)<\infty\). Consider a model of the type: 
\[ 
Y_{t}=\Sigma^{1/2} W_{\mathcal{T}(t)}+L_{\mathcal{T}(t)}, \quad t\geq 0,
\]
where \((W_t)\) is a \(10\)-dimensional Wiener process, \(\Sigma\) is a \(10\times10\) positive semi-definite matrix as before, \( L_{t}=(L^{1}_{t},\ldots, L^{10}_{t}),\) \(t\geq0,\) is a \(10\)-dimensional L\'evy process of compound Poisson type and \( \mathcal{T} \) is a Gamma process with parameters as before. In particular, we take  each L\'evy process \(L^{k}_{t},\) \(k=1,\ldots,10,\) to be of the form \(\sum^{N_t}_{i=1}\xi_i,\) where \((N_t)\) is a Poisson process with intensity \(1\) and \(\xi_1,\xi_2,\ldots\) are i.i.d standard normals. We compute and compare the estimates \(\widehat\Sigma\) and \(\widetilde\Sigma\) for different values of \(\lambda\) (\(n=10000\)). As can be seen from Figure~\ref{fig: cpp_error_intercept}, the estimate \(\widetilde\Sigma\) has much better performance than \(\widehat\Sigma.\) Moreover,    larger values of \(\lambda\) (stronger penalisation) can make the difference between \(\widehat\Sigma\) and \(\widetilde\Sigma\) less severe. 

 \begin{figure}[!tp]
\centering
\includegraphics[width=0.4\linewidth]{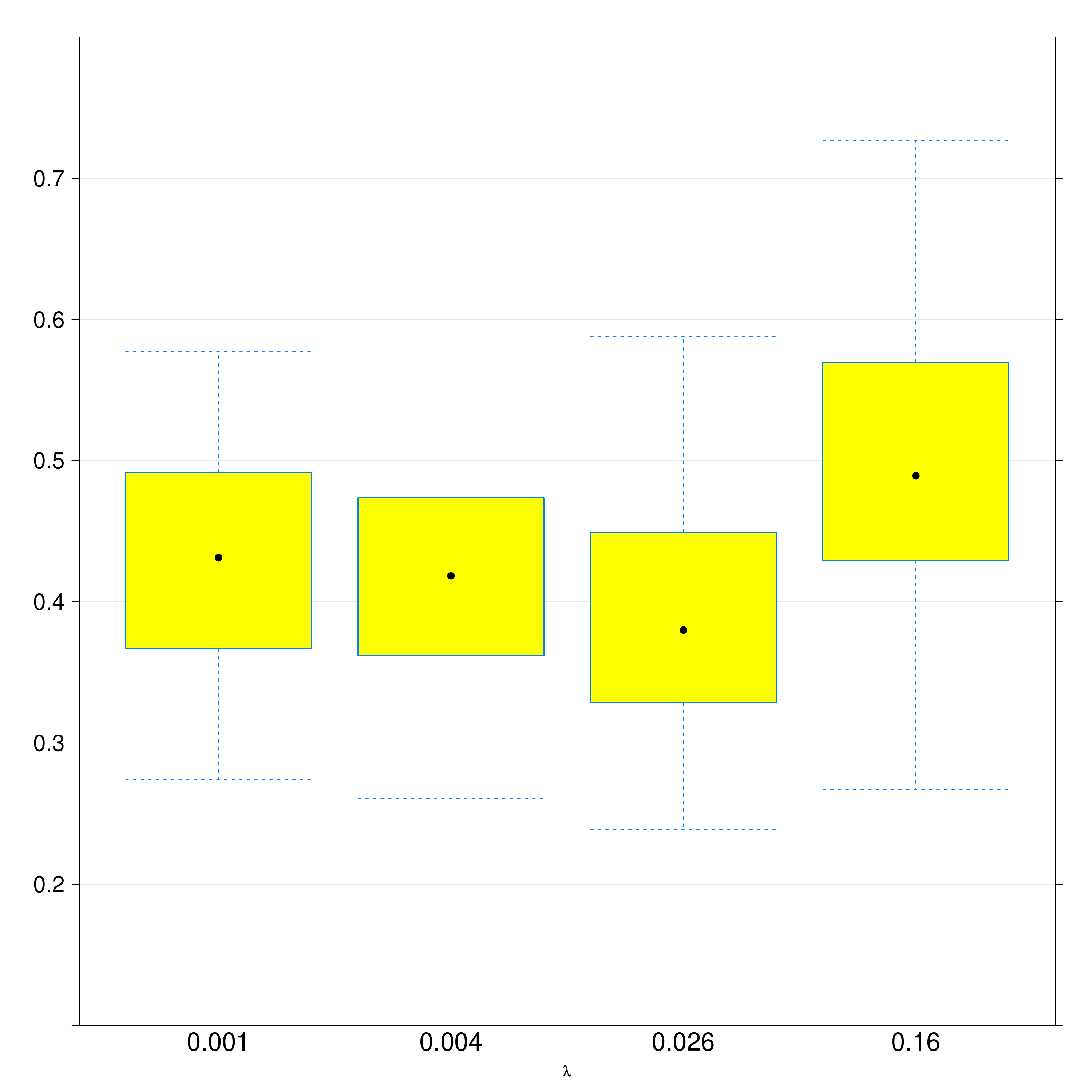}
\includegraphics[width=0.4\linewidth]{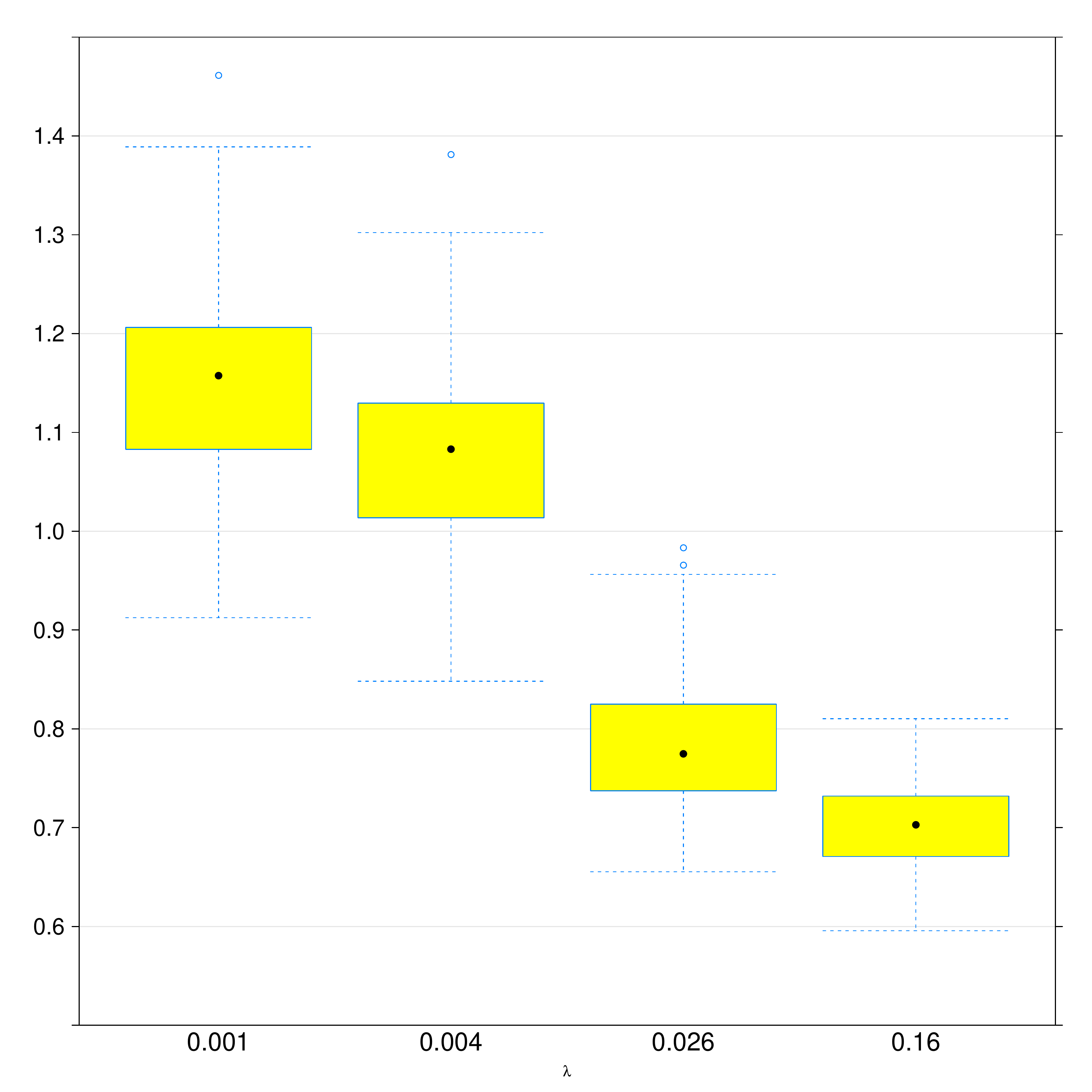}
 \caption{The relative estimation errors \(\|\widetilde{\Sigma}_{n,\lambda}-\Sigma\|_{2}/\|\Sigma\|_2\) (left) and  \(\|\widehat{\Sigma}_{n,\lambda}-\Sigma\|_{2}/\|\Sigma\|_2\)(right) in dependence on \(\lambda\) in the case \(\nu(\mathbb{R}^d)<\infty.\)
\label{fig: cpp_error_intercept}}
\end{figure}

 \subsection{Estimated time change}\label{sec:sim_lapl_est}

 Let us now illustrate the performance of our estimation algorithm in the case of unknown distribution of the time change where the Laplace tranform $\mathscr L$ is estimated according to Section~\ref{sec:lapl_est}. In practice we need to cut the expansion of $\mathscr L_m^{-1}$ from \eqref{eq: Linv}. We thus consider an approximation \(\mathscr L^{-1}_{m,J}(z)\) of the form
\begin{eqnarray}
\label{eq: Linv_app}
\mathscr L^{-1}_{m,J}(z):=\sum_{j=1}^J H_{j}\frac{(z-1)^j}{j !}
\end{eqnarray} 
for some integer \(J>1.\) Figure~\ref{fig: laplace_estimated} (left) shows a typical realisation of the difference  \(\mathscr L^{-1}_{m,J}(z)-\mathscr L^{-1}(z),\) \(z=x+iy,\) for \(m=100\) and \(J=20\) in the case of a Gamma subordinator with parameters as before. On the right hand side of Figure~\ref{fig: laplace_estimated}, one can see   box plots of the error \(\|\widehat{\Sigma}_{n,\lambda}-\Sigma\|_{2}/\|\Sigma\|_2\) when using \(\mathscr L^{-1}_{m,J}(z)\) instead of \(\mathscr L^{-1}(z)\) for different values of \(m\) and \(J=20,\) other parameters being the same as in the \(10\)-dimensional example of Section~\ref{sec:tc_known}.  As can be seen, already relatively small values of \(m\)  lead to reasonable estimates of \(\Sigma.\) Moreover by comparing Figure~\ref{fig: laplace_estimated} with Figure~\ref{fig:gamma_change}, one can even observe a slight improvement of the error rate  for \(\mathscr L^{-1}_{m,J}(z)\). This can be due to a regularising effect of the series truncation in \eqref{eq: Linv_app}.
\begin{figure}[tp]
\centering
\includegraphics[width=0.5\linewidth]{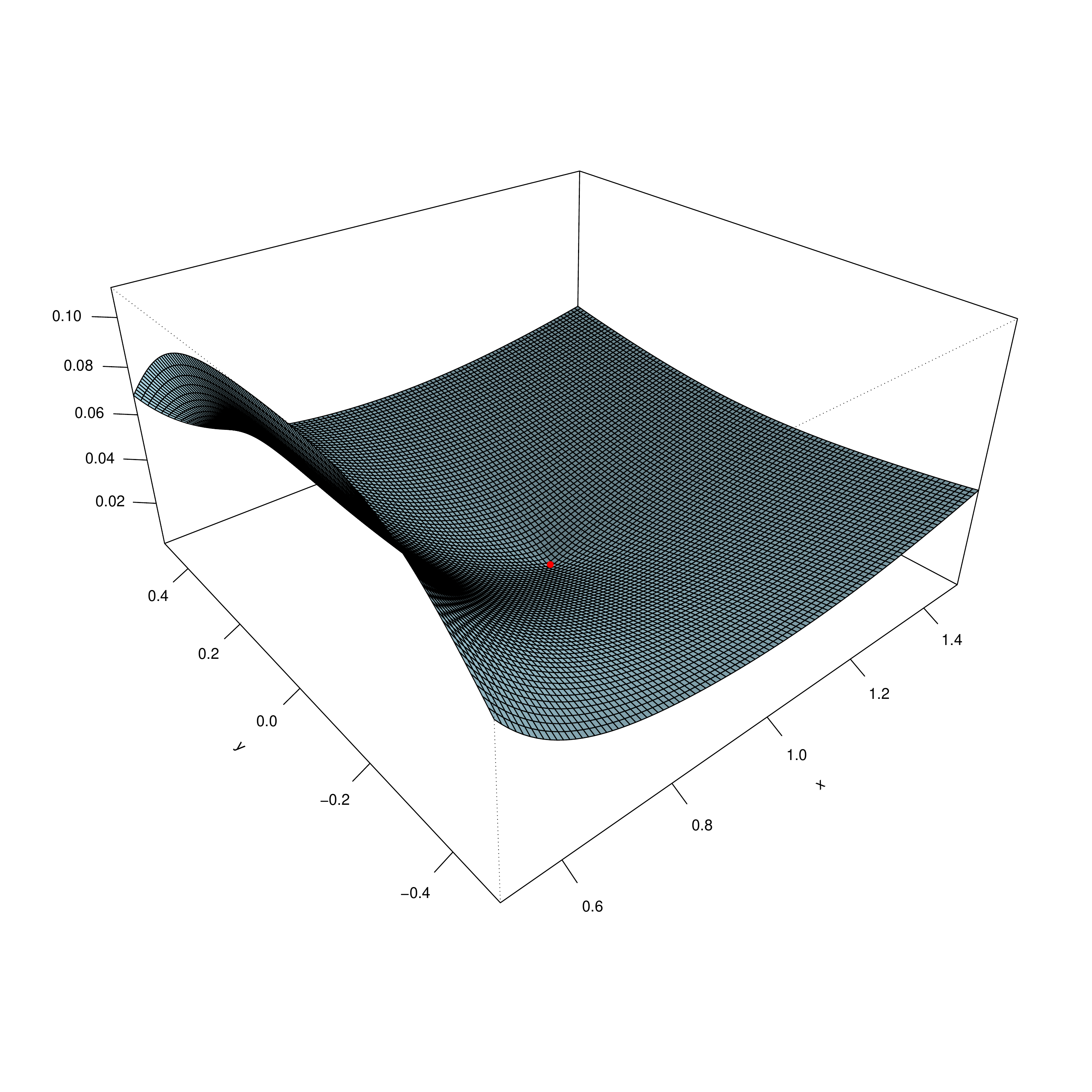}
\includegraphics[width=0.4\linewidth]{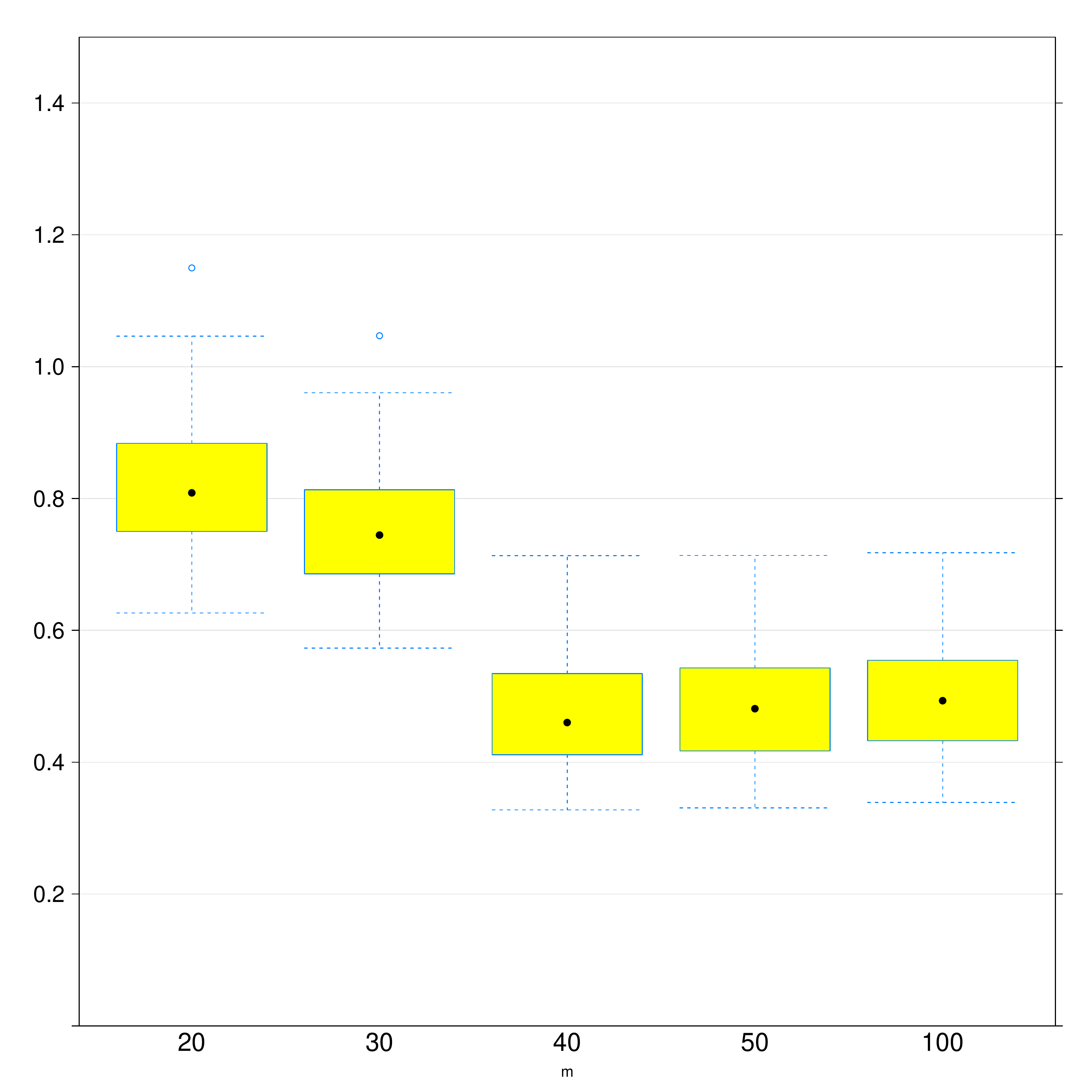}
 \caption{The difference between \(\mathscr L^{-1}_{m,J}(z)\)  and \(\mathscr L^{-1}(z)\)
 on the complex plane \(z=x+iy\) in the case of Gamma subordinator (right) and  box plots of the error \(\|\widehat{\Sigma}_{n,\lambda}-\Sigma\|_{2}/\|\Sigma\|_2\) when using \(\mathscr L^{-1}_{m,J}(z)\) instead of \(\mathscr L^{-1}(z)\) for different values of \(m\) and \(J=20.\)   
\label{fig: laplace_estimated}}
\end{figure}

\section{Proofs of the oracle inequalities}
\label{seq:proof_oracle}
\subsection{Proof of the isometry property: Lemma~\ref{lem:rip}}\label{sec:ProofRip}
  Let $A$ has rank $k$ and admit the singular value decomposition $A=QDQ^{\top}$ for the diagonal matrix $D=\diag(\lambda_{1},\ldots,\lambda_{k},0,\ldots,0)$ and an orthogonal matrix $Q$. Noting that $\langle\Theta(u),A\rangle\ge0$ for all $u\in\R^d$, we have
  \begin{align*}
  \left\Vert (A,a)\right\Vert _{w}^{2} 
   = & \int_{\mathbb{R}^{d}}\big(\langle\Theta(u),A\rangle+2U^2|u|^{-2}a\big)^2w_{U}(u)\d u \\
   \ge & \int_{\R^d}\big(\langle Q^{\top}u,DQ^{\top}u\rangle^{2}+4U^4a^2\big)\frac{w_U(u)}{|u|^{4}}\d u \\
   = & \int_{\R^d}\big((\lambda_{1}v_{1}^{2}+\ldots+\lambda_{k}v_{k}^{2})^2+4a^2\big)\frac{w(v)}{|v|^{4}}\d v \\
   \ge & \int_{\R^d}\big(\lambda_{1}^2v_{1}^{4}+\ldots+\lambda_{k}^2v_{k}^{4}+4a^2\big)\frac{w(v)}{|v|^{4}}\d v \\
   = & \varkappa_1\left\Vert A\right\Vert _{2}^{2}+\varkappa_2a^2,
  \end{align*}
  with $\varkappa_1:=\int_{\R^d}(v_{1}^{4}/|v|^{4})w(v)\d v$ and $\varkappa_2:=4\int_{\R^d}|v|^{-4}w(v)\d v\ge \varkappa_1$.
 On the other hand the Cauchy-Schwarz inequality yields
  \begin{align*}
    \left\Vert (A,a)\right\Vert _{w}^{2} 
   \le &2\int_{\mathbb{R}^{d}}\big(\langle\Theta(u),A\rangle^2+4U^4|u|^{-4}a^2\big)w_{U}(u)\d u \\
   \le & 2\|A\|_2^2\int_{\mathbb{R}^{d}}\|\Theta(u)\|_2^2w_U(u)\d u+8a^2\int_{\R^d} |v|^{-4}w(v)\d v\\
   = & 2\|w\|_{L^1}\|A\|_2^2+2\varkappa_2a^2,
  \end{align*}
  where the last equality follow from $\|\Theta(u)\|_2=1$ for any $u\neq1$ since $\Theta(u)=|u|^{-2}uu^\top$ has the spectrum $\{1,0,\dots,0\}$.
  \qed

\subsection{Proof of the first oracle inequality: Proposition~\ref{prop:oracle1}}\label{sec:ProofOracle1}
  For convenience define $\mathcal{Y}_{n}:=2|u|^{-2}\Re\hat{\psi}_{n}(u)$. It follows from the definition of $(\hat{\Sigma}_{n,\lambda},\hat\alpha_{n,\lambda})$ that for all $M\in\mathbb{M}$ and $a\ge0$
  \begin{align*}
  & \|(\hat{\Sigma}_{n,\lambda},\frac{\hat\alpha_{n,\lambda}}{U^2})\|_{w}^{2}-2\int_{\mathbb{R}^{d}}\mathcal{Y}_{n}(u)\langle\tilde\Theta(u),\diag(\hat{\Sigma}_{n,\lambda},\frac{\hat\alpha_{n,\lambda}}{U^2})\rangle w_{U}(u)\,\d u+\lambda(\|\hat{\Sigma}_{n,\lambda}\|_{1}+\frac{\hat\alpha_{n,\lambda}}{U^2})\\
  & \quad\le\|(M,\frac a{U^2})\|_{w}^{2}-2\int_{\mathbb{R}^{d}}\mathcal{Y}_{n}(u)\langle\tilde\Theta(u),\diag(M,\frac a{U^2})\rangle w_{U}(u)\d u +\lambda(\|M\|_{1}+\frac a{U^2}).
  \end{align*}
  Hence, 
  \begin{align*}
  &\|(\hat{\Sigma}_{n,\lambda},\frac{\hat\alpha_{n,\lambda}}{U^2})\|_{w}^{2}-2\langle(\Sigma,\frac\alpha{U^2}),(\hat{\Sigma}_{n,\lambda},\frac{\hat\alpha_{n,\lambda}}{U^2})\rangle_{w}+\lambda(\|\hat{\Sigma}_{n,\lambda}\|_{1}+\frac{\hat\alpha_{n,\lambda}}{U^2})\\
  &\qquad\le\|(M,\frac a{U^2})\|_{w}^{2}-2\langle(\Sigma,\frac\alpha{U^2}),(M,\frac a{U^2})\rangle_{w}+\lambda(\|M\|_{1}+\frac a{U^2})+\langle\mathcal{R}_{n},\diag(\hat{\Sigma}_{n,\lambda}-M,\frac {\hat\alpha_{n,\lambda}-a}{U^2})\rangle
  \end{align*}
  Due to the trace duality we have on the good event that $|\langle\mathcal{R}_{n},\diag(\hat{\Sigma}_{n,\lambda}-M,U^{-2}(\hat\alpha_{n,\lambda}-a))\rangle|\le\lambda\|(\hat{\Sigma}_{n,\lambda}-M\|_{1}+U^{-2}|\hat\alpha_{n,\lambda}-a|)$ and as a result 
  \begin{align*}
  &\|(\hat{\Sigma}_{n,\lambda}-\Sigma,U^{-2}(\hat\alpha_{n,\lambda}-\alpha))\|_{w}^{2} \\
   &\quad \le  \|(M-\Sigma,U^{-2}(a-\alpha))\|_{w}^{2}+\lambda\big(\|\hat{\Sigma}_{n,\lambda}-M\|_{1}+\|M\|_{1}-\|\hat{\Sigma}_{n,\lambda}\|_{1}+U^{-2}(|\hat\alpha_{n,\lambda}-a|+a-\hat\alpha_{n,\lambda})\big)\\
  &\quad \le  \|(M-\Sigma,U^{-2}(a-\alpha))\|_{w}^{2}+2\lambda(\|M\|_{1}+U^{-2}a).
  \end{align*}
  Consequently, choosing $a=\alpha$ yields
  \begin{equation*}
  \|(\hat{\Sigma}_{n,\lambda}-\Sigma,U^{-2}(\hat\alpha_{n,\lambda}-\alpha))\|_{w}^{2}\le\inf_{M\in\mathbb{M}}\left\{\|M-\Sigma\|_{w}^{2}+2\lambda(\|M\|_{1}+U^{-2}\alpha)\right\}.\tag*{\qed}
  \end{equation*}

\subsection{Proof of the second oracle inequality: Theorem~\ref{thm:oracle2}}\label{sec:proofOracle}

We first introduce some abbreviations. Define $\bar{\mathbb M}:=\{\diag(M,a):M\in\mathbb M,a\in I\}$ with elements $\bar M=\diag(M,a)\in\bar{\mathbb M}$ and the convention $\|\bar M\|_w:=\|(M,a)\|_w$. Note that convexity of $\mathbb M$ and $I$ implies convexity of $\bar{ \mathbb M}$. Moreover, we write $\bar \Sigma_{n,\lambda}:=\diag(\tilde\Sigma_{n,\lambda},U^{-2}\hat\alpha_{n,\lambda})$ and $\bar\Sigma=\diag(\Sigma,U^{-2}\alpha)$. As above we use $\mathcal{Y}_{n}:=2|u|^{-2}\Re\hat{\psi}_{n}(u)$.

The proof borrows some ideas and notation from the proof of Theorem
1 in \cite{koltchinskii2011nuclear}. First note that a necessary
condition of extremum in \eqref{eq:sigmaTilde} implies that there is
$V_{n}\in\partial\|\bar{\Sigma}_{n,\lambda}\|_{1}$, i.e. $V_{n}$
is an element of the subgradient of the nuclear norm, such that the
matrix 
\[
A=2\int_{\mathbb{R}^{d}}\tilde\Theta(u)\left[\mathcal{Y}_{n}(u)-\langle\tilde\Theta(u),\bar{\Sigma}_{n,\lambda}\rangle\right]w_{U}(u)\,\d u-\lambda V_{n}
\]
belongs to the normal cone at the point $\bar{\Sigma}_{n,\lambda}$
i.e. $\langle A,\bar{\Sigma}_{n,\lambda}-\bar M\rangle\geq0$ for all $\bar M\in\bar{\mathbb{M}}.$
Hence 
\[
2\int_{\mathbb{R}^{d}}\langle\tilde\Theta(u),\bar{\Sigma}_{n,\lambda}-\bar M\rangle\left[\mathcal{Y}_{n}(u)-\langle\tilde\Theta(u),\bar{\Sigma}_{n,\lambda}\rangle\right]w_{U}(u)\,\d u-\lambda\langle V_{n},\bar{\Sigma}_{n,\lambda}-\bar M\rangle\geq0
\]
or equivalently 
\[
2\int_{\mathbb{R}^{d}}\langle\tilde\Theta(u),\bar{\Sigma}_{n,\lambda}-\bar M\rangle\mathcal{Y}_{n}(u)w_{U}(u)\,\d u-2\langle\bar{\Sigma}_{n,\lambda},\bar{\Sigma}_{n,\lambda}-\bar M\rangle_{w}-\lambda\langle V_{n},\bar{\Sigma}_{n,\lambda}-\bar M\rangle\geq0.
\]
Furthermore 
\begin{multline*}
2\int_{\mathbb{R}^{d}}\langle\tilde\Theta(u),\bar{\Sigma}_{n,\lambda}-\bar M\rangle[\mathcal{Y}_{n}(u)-\langle\tilde\Theta(u),\bar\Sigma\rangle]w_{U}(u)\,\d u\\
-2\langle\bar{\Sigma}_{n,\lambda}-\bar\Sigma,\bar{\Sigma}_{n,\lambda}-\bar M\rangle_{w}-\lambda\langle V_{n}-V,\bar{\Sigma}_{n,\lambda}-\bar M\rangle\geq\lambda\langle V,\bar{\Sigma}_{n,\lambda}-\bar M\rangle
\end{multline*}
for any $V\in\partial\|\bar M\|_{1}.$ Fix some $\bar M\in\bar{\mathbb{M}}$ of rank
$r$ with the spectral representation 
\begin{eqnarray*}
\bar M=\sum_{j=1}^{r}\sigma_{j}u_{j}v_{j}^{\top},
\end{eqnarray*}
where $\sigma_{1}\geq\ldots\geq\sigma_{r}>0$ are singular values
of $\bar M.$ Due to the representation for the subdifferential of the
mapping $\bar M\to\|\bar M\|_{1}$ (see \citet{watson1992characterization})
\begin{eqnarray*}
\partial\|\bar M\|_{1}=\Big\{ \sum_{j=1}^{r}u_{j}v_{j}^{\top}+\Pi_{S_{1}^{\top}}\Lambda\Pi_{S_{2}^{\top}}:\|\Lambda\|_{\infty}\le1\Big\} ,
\end{eqnarray*}
where $(S_{1},S_{2})=\left(\spn(u_{1},\ldots,u_{r}),\spn(v_{1},\ldots,v_{r})\right)$
is the support of $\bar M$, $\Pi_{S}$ is a projector on the linear vector subspace $S$, we get 
\begin{eqnarray*}
V=\sum_{j=1}^{r}u_{j}v_{j}^{\top}+\Pi_{S_{1}^{\top}}\Lambda\Pi_{S_{2}^{\top}},\text{ for some }\Lambda\text{ with }\|\Lambda\|_{\infty}\le1.
\end{eqnarray*}
By the trace duality, there is $\Lambda$ such that 
\begin{eqnarray*}
\langle\Pi_{S_{1}^{\top}}\Lambda\Pi_{S_{2}^{\top}},\bar{\Sigma}_{n,\lambda}-\bar M\rangle=\langle\Pi_{S_{1}^{\top}}\Lambda\Pi_{S_{2}^{\top}},\bar{\Sigma}_{n,\lambda}\rangle=\langle\Lambda,\Pi_{S_{1}^{\top}}\bar{\Sigma}_{n,\lambda}\Pi_{S_{2}^{\top}}\rangle=\|\Pi_{S_{1}^{\top}}\bar{\Sigma}_{n,\lambda}\Pi_{S_{2}^{\top}}\|_{1}
\end{eqnarray*}
and for this $\Lambda$
\begin{align*}
&2\int_{\mathbb{R}^{d}}\langle\tilde\Theta(u),\bar{\Sigma}_{n,\lambda}-\bar M\rangle[\mathcal{Y}_{n}(u)-\langle\tilde\Theta(u),\bar\Sigma\rangle]w_{U}(u)\d u
-2\langle\bar{\Sigma}_{n,\lambda}-\bar\Sigma,\bar{\Sigma}_{n,\lambda}-\bar M\rangle_{w}\\
&\qquad\ge\lambda\Big\langle \sum_{j=1}^{r}u_{j}v_{j}^{\top},\bar{\Sigma}_{n,\lambda}-\bar M\Big\rangle +\lambda\|\Pi_{S_{1}^{\top}}\bar{\Sigma}_{n,\lambda}\Pi_{S_{2}^{\top}}\|_{1}
\end{align*}
since $\langle V_{n}-V,\bar{\Sigma}_{n,\lambda}-\bar M\rangle=\langle V_{n},\bar{\Sigma}_{n,\lambda}-\bar M\rangle+\langle V,\bar M-\bar{\Sigma}_{n,\lambda}\rangle\geq0$ ($V\in\partial\|\bar M\|_{1}$ and $V_{n}\in\partial\|\bar{\Sigma}_{n,\lambda}\|_{1}$). 
Using the identities 
\[
\Big\Vert \sum_{j=1}^{r}u_{j}v_{j}^{\top}\Big\Vert _{\infty}=1,\quad\Big\langle \sum_{j=1}^{r}u_{j}v_{j}^{\top},\bar{\Sigma}_{n,\lambda}-\bar M\Big\rangle =\Big\langle \sum_{j=1}^{r}u_{j}v_{j}^{\top},\Pi_{S_{1}}(\bar{\Sigma}_{n,\lambda}-\bar M)\Pi_{S_{2}}\Big\rangle ,
\]
we deduce 
\begin{align*}
&\|\bar{\Sigma}_{n,\lambda}-\bar \Sigma\|_{w}^{2}+\|\bar{\Sigma}_{n,\lambda}-\bar M\|_{w}^{2}+\lambda\|\Pi_{S_{1}^{\top}}\bar{\Sigma}_{n,\lambda}\Pi_{S_{2}^{\top}}\|_{1}\\
\le&
\|\bar M-\bar\Sigma\|_{w}^{2}+\lambda\|\Pi_{S_{1}}(\bar{\Sigma}_{n,\lambda}-\bar M)\Pi_{S_{2}}\|_{1}-\langle\mathcal R_n,\bar{\Sigma}_{n,\lambda}-\bar M\rangle.
\end{align*}
On the good event the trace duality yields
\begin{align*}
\big|\langle\mathcal R_n,\bar{\Sigma}_{n,\lambda}-\bar M\rangle\big|&\le\big|\langle\mathcal R_n,\pi_{\bar M}(\bar{\Sigma}_{n,\lambda}-\bar M)\rangle\big|+\big|\langle\mathcal R_n,\Pi_{S_{1}^{\top}}\bar{\Sigma}_{n,\lambda}\Pi_{S_{2}^{\top}}\rangle\big|\\
&\le \lambda\|\pi_{\bar M}(\bar{\Sigma}_{n,\lambda}-\bar M)\|_1+\lambda\|\Pi_{S_{1}^{\top}}\bar{\Sigma}_{n,\lambda}\Pi_{S_{2}^{\top}}\|_1
\end{align*}
with $\pi_{\bar M}(A)=A-\Pi_{S_{1}^{\top}}A\Pi_{S_{2}^{\top}}.$ 
Next by the Cauchy-Schwarz inequality, 
\begin{align*}
\|\Pi_{S_{1}}(\bar{\Sigma}_{n,\lambda}-\bar M)\Pi_{S_{2}}\|_{1}
\le\sqrt{\operatorname{rank}(\bar M)}\|\Pi_{S_{1}}(\bar{\Sigma}_{n,\lambda}-\bar M)\Pi_{S_{2}}\|_{2}
\le\sqrt{\operatorname{rank}(\bar M)}\|\bar{\Sigma}_{n,\lambda}-\bar M\|_{2},
\end{align*}
as well as 
\begin{align*}
\|\pi_{\bar M}(\bar{\Sigma}_{n,\lambda}-\bar M)\|_{1} & \le \sqrt{\operatorname{rank}(\pi_{\bar M}(\bar{\Sigma}_{n,\lambda}-\bar M))}\|\pi_{\bar M}(\bar{\Sigma}_{n,\lambda}-\bar M)\|_{2}\\
 & \le \sqrt{2\operatorname{rank}(\bar M)}\|\bar{\Sigma}_{n,\lambda}-\bar M\|_{2},
\end{align*}
since $\pi_{\bar M}(A)=\Pi_{S_{1}^{\top}}A\Pi_{S_{2}}+\Pi_{S_{1}}A$. Combining the above inequalities, we get 
\begin{align*}
&\|\bar{\Sigma}_{n,\lambda}-\bar\Sigma\|_{w}^{2}+\|\bar{\Sigma}_{n,\lambda}-\bar M\|_{w}^{2}+\lambda\|\Pi_{S_{1}^{\top}}\bar{\Sigma}_{n,\lambda}\Pi_{S_{2}^{\top}}\|_{1}\\
\le&\|\bar M-\bar\Sigma\|_{w}^{2}+\lambda\sqrt{\operatorname{rank}(\bar M)}\|\bar{\Sigma}_{n,\lambda}-\bar M\|_{2}+
\lambda\sqrt{2\operatorname{rank}(\bar M)}\|\bar{\Sigma}_{n,\lambda}-\bar M\|_{2}+\lambda\|\Pi_{S_{1}^{\top}}\bar{\Sigma}_{n,\lambda}\Pi_{S_{2}^{\top}}\|_{1}
\end{align*}
which is equivalent to
\[
 \|\bar{\Sigma}_{n,\lambda}-\bar\Sigma\|_{w}^{2}+\|\bar{\Sigma}_{n,\lambda}-\bar M\|_{w}^{2}
\le\|\bar M-\bar \Sigma\|_{w}^{2}+(1+\sqrt 2)\lambda\sqrt{\operatorname{rank}(\bar M)}\|\bar{\Sigma}_{n,\lambda}-\bar M\|_{2}.
\]
Finally, we choose $\bar M=\diag(M,U^{-2}\alpha)$ and apply Lemma~\ref{lem:rip} as well as the standard estimate $ab\le(a/2)^2+b^2$ for any $a,b\in\R$ to obtain on the good event
\begin{equation*}
  \|\bar{\Sigma}_{n,\lambda}-\bar\Sigma\|_{w}^{2}\le\|M-\Sigma\|_{w}^{2}+(\tfrac{1+\sqrt 2}2)^2\underline\varkappa_w^{-1}\lambda^2(\operatorname{rank}(M)+\1_{\alpha\neq0}).\tag*{\qed}
\end{equation*}

\section{Proof of the convergence rates}
\label{seq:proof_conv}
\subsection{Proof of the upper bound: Theorem \ref{thm:conR}}

We start with an auxiliary lemma:
\begin{lem}\label{lem:moments} Let $L=\{L_{t}:t\ge0\}$ be a $d$-dimensional
  L\'evy process with characteristic triplet $(A,c,\nu)$. If $\int_{|x|\ge1}|x|^{p}\nu(\d x)$
  for $p\ge1$, then $\E[|L_{t}|^{p}]=\mathcal{O}(t^{p/n}\vee t^{p})$
  for any $t>0$ where $n\in\N$ is the smallest even natural number
  satisfying $n\ge p$. 
\end{lem}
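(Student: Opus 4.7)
My approach is to split $L$ into a L\'evy process with bounded jumps (which has moments of all orders) and an independent compound Poisson large-jump part (of finite $p$-th moment by assumption), bound an even integer moment of the truncated piece, and then interpolate via Jensen's inequality. Concretely, I would write
\[
  L_t=\bar L_t+J_t,\qquad \bar L_t:=L_t-\sum_{s\le t}\Delta L_s\1_{|\Delta L_s|>1},\quad J_t:=\sum_{s\le t}\Delta L_s\1_{|\Delta L_s|>1}.
\]
Since $\lambda:=\nu(|x|>1)\le\int_{|x|\ge1}|x|^p\nu(\d x)<\infty$, the process $J$ is compound Poisson with intensity $\lambda$ and jump law $\lambda^{-1}\nu(\d x)\1_{|x|>1}$ of finite $p$-th moment, while $\bar L$ is an independent L\'evy process whose jumps are bounded by $1$ and which therefore admits moments of all orders.

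Let $n\in\N$ be the smallest even integer with $n\ge p$. The central step is to establish
\[
  \E|\bar L_t|^n\lesssim t\vee t^n,\qquad t>0.
\]
Decomposing $\bar L_t=ct+B_t+\bar M_t$ according to L\'evy--It\^o (drift, Brownian part of covariance $A$, and compensated small-jump martingale with jumps bounded by $1$) and applying the triangle inequality in $L^n$, the drift contributes $|c|^nt^n$ and $\E|B_t|^n=O(t^{n/2})$. For $\bar M$ I apply the Burkholder--Davis--Gundy inequality $\E|\bar M_t|^n\lesssim \E\bigl[[\bar M,\bar M]_t^{n/2}\bigr]$ and evaluate this moment of the subordinator $[\bar M,\bar M]_t=\sum_{s\le t}|\Delta L_s|^2\1_{|\Delta L_s|\le1}$ via its cumulants (which scale linearly in $t$), yielding $O(t^{n/2}\vee t)$. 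Summing the three contributions and absorbing $t^{n/2}$ into $t\vee t^n$ gives the claim.

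Jensen's inequality then converts the integer moment estimate into the fractional bound
\[
  \E|\bar L_t|^p\le\bigl(\E|\bar L_t|^n\bigr)^{p/n}=O(t^{p/n}\vee t^p).
\]
For $J_t=\sum_{i=1}^{N_t}\xi_i$ with $N_t\sim\operatorname{Poisson}(\lambda t)$ and i.i.d.\ $\xi_i$ of finite $p$-th moment, the power-mean inequality $|\sum_{i=1}^k\xi_i|^p\le k^{p-1}\sum_{i=1}^k|\xi_i|^p$ (valid for $p\ge1$) together with conditioning on $N_t$ gives $\E|J_t|^p\le\E[N_t^p]\,\E|\xi_1|^p=O(t\vee t^p)$. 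Because $p/n\le1$ forces $t^{p/n}\ge t$ for $t\le1$, the $O(t)$ term is absorbed into $O(t^{p/n}\vee t^p)$, and a final triangle inequality $\E|L_t|^p\lesssim\E|\bar L_t|^p+\E|J_t|^p$ completes the argument. The main technical obstacle is the integer moment bound above: one has to track both the large-time $t^n$ behaviour coming from the drift and the small-time linear-in-$t$ behaviour of $\E\bigl[[\bar M,\bar M]_t^{n/2}\bigr]$ induced by isolated jumps; the remaining steps are routine applications of Jensen's inequality and Poisson moment identities.
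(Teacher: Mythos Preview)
Your proof follows essentially the same strategy as the paper: split off the large jumps as an independent compound Poisson process, bound an even integer moment of the bounded-jump part and interpolate via Jensen, then handle the compound Poisson piece directly. For the bounded-jump part the paper takes a slightly shorter route, invoking the moment--cumulant relation (the $n$-th absolute moment is a polynomial of degree $n$ in the cumulants, each of which is linear in $t$, hence $\E[|M_t|^n]\lesssim t+t^n$) instead of your L\'evy--It\^o/BDG decomposition; conversely, your power-mean bound $\E|J_t|^p\le\E[N_t^p]\,\E|\xi_1|^p$ is the correct argument for $p\ge1$, whereas the paper's step $\E[|N_t|^p]\le\E\bigl[\sum_{k=1}^{P_t}|Z_k|^p\bigr]$ as written only holds for $p\le1$.
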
 
\begin{proof} 
  We decompose the L\'evy
  process $L=M+N$ into two independent L\'evy processes $M$ and $N$
  with characteristics $(A,c,\nu\1_{\{|x|\le1\}})$ and $(0,0,\nu\1_{\{|x|>1\}})$,
  respectively. The triangle inequality yields 
  \[
  \E[|L_{t}|^{p}]\lesssim\E[|M_{t}|^{p}]+\E[|N_{t}|^{p}].
  \]
  Let $n\ge p$ be an even integer. Due to the compactly supported jump
  measure of $M$, the $n$-th moment of $M_{t}$ is finite and can
  be written as a polynomial of degree $n$ of its first $n$ cumulants.
  Since all cumulants of $M_{t}$ are linear in $t$, we conclude with
  Jensen's inequality 
  \[
  \E[|M_{t}|^{p}]\le\E[M_{t}^{n}]^{p/n}\lesssim\big(t+t^{n})^{p/n}\lesssim t^{p/n}\vee t^{p}.
  \]
  Since $N$ is a compound Poisson process, it can be represented as
  $N_{t}=\sum_{k=1}^{P_{t}}Z_{k}$ for a Poisson process $P_{t}$ with
  intensity $\lambda=\nu(\{|x|>1\})<\infty$ and a sequence of i.i.d. random variables $(Z_{k})_{k\ge1}$
  which is independent of $P$. Hence, 
  \[
  \E[|N_{t}|^{p}]\le\E\Big[\sum_{k=1}^{P_{t}}|Z_{k}|^{p}\Big]\le\E[P_{t}]\E[|Z_{1}|^{p}]=t\lambda\E[|Z_{1}|^{p}].
  \]
  Note that $\E[|Z_{1}|^{p}]$ is finite owing to the assumption $\int_{|x|\ge1}|x|^{p}\nu(\d x)<\infty$.
\end{proof} 
\begin{remark} While the upper bound $t^{p}$ is natural,
the order $t^{p/n}$ is sharp too in the sense that for a Brownian
motion $L_{t}=W_{t}$ and $p=1$ we have $\E[|W_{t}|]=t^{1/2}\E[|W_{1}|]$.
\end{remark}

To show the upper bound, we start with bounding the approximation error term in the decomposition \eqref{eq:decomp}.
\begin{lem}\label{lem:ApproxError}
  If the jump measure $\nu$ of $Z$ satisfies Assumption~\ref{ass:jumps}(i)
  for some $s\in(-2,\infty),C_{\nu}>0$, then
  \begin{equation}
  \int_{\R^{d}}\frac{|\Re\Psi(u)+\alpha|}{|u|^2}w_{U}(u)\d u
  \le C_{s}|U_n|^{-s-2},\label{REst}
  \end{equation}
  where $C_s:=2C_{\nu}\int_{\{1/4\le|v|\le1/2\}}|v|^{-s-2}w(v)\d v.$
\end{lem}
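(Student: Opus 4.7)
The plan is to split into the two cases distinguished by the sign of $s$ in Assumption~\ref{ass:jumps}(i), reduce both to a pointwise bound of the form $|\Re\Psi(u)+\alpha|\lesssim |u|^{-s}$ on the support of $w_U$, and then conclude by a change of variables. The starting point is the observation that taking real parts in the definition of $\Psi$ kills the imaginary drift and linear compensator, leaving
\[
  \Re\Psi(u)=\int_{\R^d}\bigl(\cos\langle x,u\rangle-1\bigr)\,\nu(\d x).
\]

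First I would treat the infinite-activity case $s\in(-2,0)$, where $\alpha=0$ by convention. Using the elementary inequality $|1-\cos\theta|\le 2|\theta|^{|s|}$ (valid for $|s|\in[0,2]$, obtained by interpolating between $|1-\cos\theta|\le 2$ and $|1-\cos\theta|\le\theta^2/2$), and applying Assumption~\ref{ass:jumps}(i) with the unit vector $h=u/|u|$, I would obtain
\[
  |\Re\Psi(u)|\le 2|u|^{|s|}\int_{\R^d}|\langle x,u/|u|\rangle|^{|s|}\,\nu(\d x)\le 2C_\nu|u|^{-s}.
\]
In the complementary case $s\ge 0$ the jump measure is finite with total mass $\alpha$, so the constant $\alpha=\int\nu(\d x)$ exactly absorbs the $-1$ in the integrand, yielding the clean identity $\Re\Psi(u)+\alpha=\Re\mathcal F\nu(u)$. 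The Fourier decay postulated in Assumption~\ref{ass:jumps}(i) then gives
\[
  |\Re\Psi(u)+\alpha|\le|\mathcal F\nu(u)|\le \sqrt{C_\nu}\,(1+|u|^2)^{-s/2}\le \sqrt{C_\nu}\,|u|^{-s},
\]
using $(1+|u|^2)^{s/2}\ge |u|^s$ for $s\ge 0$. In both cases the pointwise bound $|\Re\Psi(u)+\alpha|\le 2C_\nu|u|^{-s}$ (after absorbing $\sqrt{C_\nu}\le 2C_\nu$ when $C_\nu\ge 1/4$, which is harmless) holds.

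It remains to divide by $|u|^2$, integrate against $w_U$, and substitute $v=u/U$. Since $w_U(u)\,\d u=U^{-d}w(v)U^d\,\d v=w(v)\,\d v$ and $|u|=U|v|$,
\[
  \int_{\R^d}\frac{|\Re\Psi(u)+\alpha|}{|u|^2}w_U(u)\,\d u\le 2C_\nu \int_{\R^d}(U|v|)^{-s-2}w(v)\,\d v=2C_\nu U^{-s-2}\int_{\R^d}|v|^{-s-2}w(v)\,\d v,
\]
and the support condition $\supp w\subset\{1/4<|v|\le 1/2\}$ converts the $v$-integral into the constant $C_s/(2C_\nu)$, finishing the proof. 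There is no serious obstacle: the only care needed is the trigonometric interpolation in the first case and the cancellation $-1+\alpha=0$ in the second. The polynomial moment Assumption~\ref{ass:jumps}(ii) is not used here; it will enter only later, via Lemma~\ref{lem:moments}, when controlling the stochastic error.
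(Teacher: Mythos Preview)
Your proof is correct and follows essentially the same approach as the paper's: split on the sign of $s$, obtain a pointwise bound $|\Re\Psi(u)+\alpha|\lesssim|u|^{-s}$, then change variables $v=u/U$. The only cosmetic difference is that you take real parts at the outset (reducing to $\int(\cos\langle x,u\rangle-1)\,\nu(\d x)$), whereas the paper bounds the full $|\Psi(u)|$ via $|e^{i\theta}-1-i\theta\1_{\{|x|\le1\}}|\le(\theta^2/2)\wedge2$; both routes yield the same interpolation constant up to the harmless factor $2^{1+s}$ versus $2$. Your remark about $\sqrt{C_\nu}\le 2C_\nu$ needing $C_\nu\ge 1/4$ is a fair point---the paper is equally loose here, writing $C_\nu$ where $\sqrt{C_\nu}$ is what the assumption gives.
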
 
\begin{proof} 
  Let us start with the case $s\in(-2,0)$ and $\alpha=0$. For all $u\in\mathbb{R}^{d}\setminus\{0\}$ we have 
  \begin{align*}
  \left|\Psi(u)\right|  \le & \int_{\R^{d}}\big|e^{i\langle x,u\rangle}-1-i\langle x,u\rangle\1_{\{|x|\le1\}}(x)\big|\nu(\d x)
  \le  \int_{\mathbb{R}^{d}}\Big(\frac{\langle x,u\rangle^{2}}{2}\wedge2\Big)\nu(\d x)\\
  \le & 2^{1+s}\int_{\mathbb{R}^{d}}\left|\left\langle x,u\right\rangle \right|^{|s|}\nu(\d x)
   \le  2^{1+s}C_{\nu}|u|^{-s}.
  \end{align*}
  Hence, we obtain
  \begin{align*}
  \int_{\R^d}\frac{|\Re\Psi(u)|}{|u|^2}w_{U}(u)\,\d u 
  & \le 2^{1+s}C_\nu\int_{\R^d}|u|^{-s-2}|w_U(u)\,\d u\\
  & \le U^{-s-2}2^{1+s}C_{\nu}\int_{\{1/4\le|v|\le1/2\}}|v|^{-s-2}w(v)\d v.
  \end{align*}
  In the case $s\ge0$ and $\alpha=\nu(\R^d)<\infty$, we have $\Re\Psi(u)+\alpha=\Re(\F\nu)(u),u\in\R^d,$ such that
  \begin{align*}
    \int_{\R^d}\frac{|\Re\Psi(u)+\alpha|}{|u|^2}w_{U}(u)\,\d u
    &\le C_\nu\int_{\R^d}|u|^{-2}(1+|u|^2)^{-s/2}w_U(u)\,\d u\\
    &\le U^{-s-2} C_\nu\int_{1/4\le|u|\le1/2}|v|^{-s-2}w_U(u)\,\d u.\tag*{\qedhere}
  \end{align*}

\end{proof} 

In order the bound the stochastic error term in \eqref{eq:decomp}, we apply the following linearisation lemma. We denote throughout
$\|f\|_{U}:=\sup_{|u|\le U}|f(u)|$. 
\begin{lem}\label{lem:linearisation}
  Grant Assumption~\ref{ass:time}(iii) with $C_L>0$. For all $n\in\N$ and $U>0$ we have on the event
  \[
  \mathcal{H}_{n,U}:=\Big\{\|\phi_{n}-\phi\|_{U}\le\tfrac{2}{C_L}\inf_{|u|\le U}\mathscr{L}'(-\psi(u))\Big\}
  \]
  that for all $u\in\R^d$ with $|u|\le U$ it holds 
  \[
  \Big|\mathscr{L}^{-1}(\phi_{n}(u))-\mathscr{L}^{-1}(\phi_{n}(u))-\frac{\phi_{n}(u)-\phi(u)}{\mathscr{L}'(-\psi(u))}\Big|
  \le4C_L\frac{|\phi_{n}(u)-\phi(u)|^2}{|\mathscr{L}'(-\psi(u))|^2|\psi(u)|^q},
  \]
  where $q=1$ if Assumption~\ref{ass:time}(iv) is satisfied and $q=0$ otherwise.
\end{lem}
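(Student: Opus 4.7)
The plan is to carry out a second-order Taylor expansion of the holomorphic inverse $\mathscr{L}^{-1}$ along the complex line segment joining $z:=\phi(u)$ to $z_n:=\phi_n(u)$. Concretely, on the event $\mathcal H_{n,U}$ I first verify that this segment lies in the open set where the chosen branch of $\mathscr{L}^{-1}$ is holomorphic, so that the integral remainder form
\[
\mathscr{L}^{-1}(z_n)-\mathscr{L}^{-1}(z)-(\mathscr{L}^{-1})'(z)(z_n-z)=(z_n-z)^{2}\int_{0}^{1}(1-t)(\mathscr{L}^{-1})''\big(z+t(z_n-z)\big)\,\d t
\]
is valid. Since $\mathscr{L}^{-1}(\phi(u))=-\psi(u)$ and $(\mathscr{L}^{-1})'(w)=1/\mathscr{L}'(\mathscr{L}^{-1}(w))$, the linear term produces exactly $(\phi_n(u)-\phi(u))/\mathscr{L}'(-\psi(u))$; the remainder is what must be controlled.

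Differentiating once more gives $(\mathscr{L}^{-1})''(w)=-\mathscr{L}''(\mathscr{L}^{-1}(w))/\mathscr{L}'(\mathscr{L}^{-1}(w))^{3}$, and Assumption~\ref{ass:time}(iii) immediately yields
\[
\big|(\mathscr{L}^{-1})''(w)\big|\le \frac{C_L}{|\mathscr{L}'(\mathscr{L}^{-1}(w))|^{2}}.
\]
The central task is then a uniform lower bound $|\mathscr{L}'(\mathscr{L}^{-1}(w))|\gtrsim |\mathscr{L}'(-\psi(u))|$ for $w$ on the segment. For this I would set $f(t):=\mathscr{L}'(\mathscr{L}^{-1}(z+t(z_n-z)))$ and compute $f'(t)=(\mathscr{L}''/\mathscr{L}')(\mathscr{L}^{-1}(\cdot))\,(z_n-z)$, whose modulus is at most $C_L|z_n-z|$. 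Integrating and using $|z_n-z|\le\tfrac{2}{C_L}\inf_{|v|\le U}|\mathscr{L}'(-\psi(v))|$ from $\mathcal H_{n,U}$ gives $|f(t)-f(0)|\le 2|f(0)|$, which after tightening the constants (the calibration of the factor $2/C_L$ in the definition of $\mathcal H_{n,U}$ is exactly made for this Gronwall-type step) delivers $|f(t)|\ge \tfrac12|\mathscr{L}'(-\psi(u))|$ and simultaneously confirms that the segment stays in the domain of holomorphy. Substituting this back into the remainder integral yields the claimed inequality with $q=0$.

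For $q=1$ under Assumption~\ref{ass:time}(iv) I need the extra factor $(1+|\mathscr{L}^{-1}(w)|)^{-1}$ in the bound on $|\mathscr{L}''/\mathscr{L}'|$ and must transfer it into a factor $|\psi(u)|^{-1}$. The key observation is that first-order Taylor,
\[
\mathscr{L}^{-1}(w)+\psi(u)=\int_{0}^{1}(\mathscr{L}^{-1})'\big(z+s(w-z)\big)\,(w-z)\,\d s,
\]
combined with the lower bound on $|\mathscr{L}'(\mathscr{L}^{-1}(\cdot))|$ just obtained, gives $|\mathscr{L}^{-1}(w)+\psi(u)|\le 2|z_n-z|/|\mathscr{L}'(-\psi(u))|\le 4/C_L$ on $\mathcal H_{n,U}$. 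Hence $|\mathscr{L}^{-1}(w)|\ge |\psi(u)|-4/C_L$, and since the whole estimate is vacuous when $|\psi(u)|$ is comparable to a constant, this yields $1+|\mathscr{L}^{-1}(w)|\gtrsim |\psi(u)|$ in the relevant range and thus the factor $|\psi(u)|^{q}$ with $q=1$.

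The main obstacle will be the complex-analytic bookkeeping rather than the computation: one has to ensure that the straight segment from $\phi(u)$ to $\phi_n(u)$ never leaves the domain on which the continuously chosen branch of $\mathscr{L}^{-1}$ is defined and holomorphic, and that the Gronwall step closes with the constant made available by $\mathcal H_{n,U}$. Once these geometric issues are handled, the remainder estimate is a direct insertion of the two-sided bounds into the integral remainder, with all constants absorbable into the universal factor $4C_L$.
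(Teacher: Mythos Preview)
Your approach is essentially the same as the paper's. The paper also Taylor-expands $\mathscr{L}^{-1}$ around $\phi(u)$, uses the identity $(\mathscr{L}^{-1})''=-\mathscr{L}''\!\circ\mathscr{L}^{-1}/(\mathscr{L}'\!\circ\mathscr{L}^{-1})^{3}$ together with Assumption~\ref{ass:time}(iii) to bound the remainder, and then controls $\big|\mathscr{L}'\big(\mathscr{L}^{-1}(\phi(u)+\xi(\phi_n-\phi))\big)-\mathscr{L}'(-\psi(u))\big|\le C_L|\phi_n-\phi|$ via the same $(\mathscr{L}'\!\circ\mathscr{L}^{-1})'=\mathscr{L}''\!\circ\mathscr{L}^{-1}/\mathscr{L}'\!\circ\mathscr{L}^{-1}$ computation; the $q=1$ case is handled identically by bounding $|\mathscr{L}^{-1}(w)+\psi(u)|\le 4/C_L$. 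The only cosmetic differences are that the paper uses the Lagrange (intermediate-point) form of the remainder rather than the integral form, and does not explicitly discuss the domain of holomorphy, whereas you do; your observation that the constant $2/C_L$ in the definition of $\mathcal H_{n,U}$ needs tightening to close the argument (one actually wants $C_L|\phi_n-\phi|\le\tfrac12|\mathscr{L}'(-\psi(u))|$) is correct and applies equally to the paper's computation.
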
 
\begin{proof} 
  First note that 
  \[
  (\mathscr{L}^{-1})'(z)=\frac{1}{\mathscr{L}'(\mathscr{L}^{-1}(z))}\quad\text{and}\quad(\mathscr{L}^{-1})''(z)=-\frac{\mathscr{L}''(\mathscr{L}^{-1}(z))}{\mathscr{L}'(\mathscr{L}^{-1}(z))^{3}}
  \]
  and in particular $(\mathscr{L}^{-1})'(\phi(u))=1/\mathscr{L}'(-\psi(u))$.
  Hence, the Taylor formula yields 
  \[
  \mathscr{L}^{-1}(\phi_{n}(u))-\mathscr{L}^{-1}(\phi_{n}(u))=\frac{\phi_{n}(u)-\phi(u)}{\mathscr{L}'(-\psi(u))}+R(u)
  \]
  with 
  \begin{align}
  |R(u)| & \le|\phi_{n}(u)-\phi(u)|^{2}|(\mathscr{L}^{-1})''(\phi(u)+\xi_{1}(\phi_{n}(u)-\phi(u)))|\notag\\
  & \le\frac{C_L|\phi_{n}(u)-\phi(u)|^{2}}{|\mathscr{L}'(\mathscr{L}^{-1}(\phi(u)+\xi_{1}(\phi_{n}(u)-\phi(u)))|^{2}}.\label{eq:estRem1}
  \end{align}
  for some intermediate point $\xi_{1}\in[0,1]$ depending on $u$. For another intermediate
  point $\xi_{2}\in[0,1]$ we estimate 
  \begin{align}
  & \big|\mathscr{L}'\big(\mathscr{L}^{-1}(\phi(u)+\xi_{1}(\phi_{n}(u)-\phi(u))\big)-\mathscr{L}'(-\psi(u))\big|\notag\\
  & \quad\le|\phi_{n}(u)-\phi(u)|\big|(\mathscr{L}'\circ\mathscr{L}^{-1})'(\phi(u)+\xi_{2}(\phi_{n}(u)-\phi(u)))\big|\notag\\
  & \quad=|\phi_{n}(u)-\phi(u)|\Big|\Big(\frac{\mathscr{L}''_{\Delta}\circ\mathscr{L}^{-1}}{\mathscr{L}'\circ\mathscr{L}^{-1}}\Big)(\phi(u)+\xi_{2}(\phi_{n}(u)-\phi(u)))\Big|\notag\\
  & \quad\le C_L|\phi_{n}(u)-\phi(u)|.\label{eq:estRem2}
  \end{align}
  Therefore, we have on the event $\mathcal{H}_n$ for any $u$ in the support of $w_U$
  \begin{align*}
  |R(u)| & \le C_L|\phi_{n}(u)-\phi(u)|^{2}\big(|\mathscr{L}'(-\psi(u))|-C_L|\phi_{n}(u)-\phi(u)|\big)^{-2}\le4C_L\frac{|\phi_{n}(u)-\phi(u)|^{2}}{|\mathscr{L}'(-\psi(u))|^{2}}.
  \end{align*}
  Under Assumption~\ref{ass:time}(iv) we can obtain a sharper estimate. More precisely, \eqref{eq:estRem1} and \eqref{eq:estRem2} imply together with the faster decay that
  \begin{align*}
    |R(u)| & \le\frac{4C_L|\phi_{n}(u)-\phi(u)|^{2}}{|\mathscr{L}'(-\psi(u))|^{2}(1+|\mathscr{L}^{-1}(\phi(u)+\xi_{1}(\phi_{n}(u)-\phi(u)))|)}.
  \end{align*}
  Using $\phi=\mathscr L(-\psi)$ and again \eqref{eq:estRem2}, we have on $\mathcal H_n$
  \begin{align*}
    &|\psi(u)+\mathscr{L}^{-1}(\phi(u)+\xi_{1}(\phi_{n}(u)-\phi(u)))|\\
    &\qquad\le |\phi_n(u)-\phi(u)||(\mathscr L^{-1})'(\phi(u)+\xi_{2}(\phi_{n}(u)-\phi(u)))|\\
    &\qquad\le \frac{|\phi_n(u)-\phi(u)|}{|\mathscr L'(\mathscr L^{-1}(\phi(u)+\xi_{2}(\phi_{n}(u)-\phi(u))))|}
    \le \frac{2|\phi_n(u)-\phi(u)|}{|\mathscr L'(-\psi(u))|}
    \le\frac4{C_L}.
  \end{align*}
  We conclude
  \begin{align*}
      |R(u)| & \le\frac{4C_L|\phi_{n}(u)-\phi(u)|^{2}}{|\mathscr{L}'(-\psi(u))|^{2}|\psi(u)|}.\tag*{\qedhere}
  \end{align*}  
\end{proof} 

Denoting the linearised stochastic error term by
\[
  L_n:=\int_{\R^{d}}|u|^{-2}\Re\Big(\frac{\phi_{n}(u)-\phi(u)}{\mathscr{L}'(-\psi(u))}\Big)\Theta(u) w_{U}(u)\d u\in\R^{d\times d},
\]
we obtain the following concentration inequality
\begin{lem}\label{lem:ConcLin}
  Define $\xi_U:=U^2\inf_{|u|\le U}\mathscr{L}'(-\psi(u))$. There is some $c>0$ depending only on $w$ such that for any $n\ge1$ and any $\kappa\in(0,(nU^d/\|\phi\|_{L^1(B_U^d)})^{1/2})$ 
  \[
    \mathbb P\Big(\|L_n\|_\infty\ge \frac{\kappa}{\sqrt n\xi_U}\sqrt{U^{-d}\|\phi\|_{L^1(B_U^d)}}\Big)\le 2de^{-c\kappa^2}.
  \]
\end{lem}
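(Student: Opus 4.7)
The plan is to recognise $nL_n$ as a sum of i.i.d.\ centred symmetric random matrices and apply the noncommutative Bernstein inequality of Recht. Writing $\phi_n(u)=\frac1n\sum_{j=1}^n e^{i\langle u,Y_j\rangle}$, we have $L_n=\frac1n\sum_{j=1}^n Z_j$ with
\[
  Z_j:=\int_{\R^d}|u|^{-2}\Re\Bigl(\frac{e^{i\langle u,Y_j\rangle}-\phi(u)}{\mathscr{L}'(-\psi(u))}\Bigr)\Theta(u)w_U(u)\,\d u.
\]
Each $Z_j$ is symmetric (as a real combination of $\Theta(u)=uu^\top/|u|^2$), centred by definition of $\phi$, and uniformly bounded. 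Bernstein's inequality will then yield
\[
  \P\Bigl(\Bigl\|\sum_{j=1}^n Z_j\Bigr\|_\infty\ge t\Bigr)\le 2d\exp\Bigl(-\frac{t^2/2}{n\nu+Mt/3}\Bigr)
\]
for any $M\ge\|Z_j\|_\infty$ a.s.\ and $\nu\ge\|\E[Z_1^2]\|_\infty$, and choosing $t=\sqrt n\,\kappa\sqrt{U^{-d}\|\phi\|_{L^1(B_U^d)}}/\xi_U$ should place us in the sub-Gaussian regime exactly under the stated upper bound on $\kappa$.

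The almost-sure bound is routine: on $\operatorname{supp}w_U\subset\{U/4<|u|\le U/2\}$ one has $|u|^{-2}\le 16/U^2$, $|e^{i\langle u,Y\rangle}-\phi(u)|\le 2$, $\|\Theta(u)\|_\infty=1$, and $|\mathscr L'(-\psi(u))|^{-1}\le U^2/\xi_U$, which together with $\int w_U=\|w\|_{L^1}$ give $M=C(w)/\xi_U$. The crux is the variance estimate. Expanding the square,
\[
  \E[Z_1^2]=\int\!\int \frac{c(u,v)}{|u|^2|v|^2}\,\Theta(u)\Theta(v)w_U(u)w_U(v)\,\d u\,\d v,
\]
where decomposing the product of real parts via $\Re z_1\Re z_2=\tfrac12\Re(z_1z_2)+\tfrac12\Re(z_1\bar z_2)$ and using $\E[e^{i\langle u\pm v,Y\rangle}]=\phi(u\pm v)$ produces a kernel $c(u,v)$ that is a real-linear combination of $\phi(u+v)$, $\phi(u-v)$ and the corrections $\phi(u)\phi(\pm v)$, each weighted by a coefficient of modulus $\le|\mathscr L'(-\psi(u))|^{-1}|\mathscr L'(-\psi(v))|^{-1}\le U^4/\xi_U^2$. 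Combined with $\|\Theta(u)\Theta(v)\|_\infty=|\langle u,v\rangle|/(|u||v|)\le 1$ and $|u|^{-2}|v|^{-2}\le 256/U^4$, the bound reduces to estimating the integrals of $|\phi(u\pm v)|w_U(u)w_U(v)$ and $|\phi(u)||\phi(v)|w_U(u)w_U(v)$. For the convolution-type terms, Fubini and the radial symmetry of $w$ yield
\[
  \int\!\int|\phi(u\pm v)|w_U(u)w_U(v)\,\d u\,\d v\le\|w_U*w_U\|_\infty\|\phi\|_{L^1(B_U^d)}\lesssim U^{-d}\|\phi\|_{L^1(B_U^d)},
\]
because $w_U*w_U$ is supported in $B_U^d$ with sup norm $\le \|w_U\|_\infty\|w\|_{L^1}=U^{-d}\|w\|_\infty\|w\|_{L^1}$. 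The product correction factorises into $\big(\int|\phi|w_U\big)^2\le U^{-2d}\|w\|_\infty^2\|\phi\|_{L^1(B_U^d)}^2$, which is absorbed by the previous bound since $U^{-d}\|\phi\|_{L^1(B_U^d)}\le|B_1^d|$ is bounded (owing to $\|\phi\|_\infty\le1$). Consequently $\nu\lesssim U^{-d}\|\phi\|_{L^1(B_U^d)}/\xi_U^2$ with a constant depending only on $w$.

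Inserting these $M$ and $\nu$ into Bernstein with the chosen $t$, the denominator $n\nu+Mt/3$ is dominated by $n\nu$ precisely when $\kappa$ lies in the stated range; in that sub-Gaussian regime the exponent simplifies to $-\Omega(\kappa^2)$, yielding the claim with $c$ depending only on $w$. The main obstacle is the variance estimate: one must carefully track the four cross-terms produced by the product of real parts, and verify that the $\phi(u)\phi(v)$ correction terms, which at first sight scale with $U^{-2d}\|\phi\|_{L^1}^2$ rather than $U^{-d}\|\phi\|_{L^1}$, are in fact tamed by the trivial pointwise bound $|\phi|\le1$ and hence share the same scaling as the genuine convolution contributions.
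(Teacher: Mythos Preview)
Your proof is correct and follows the same overall strategy as the paper: write $L_n$ as an average of i.i.d.\ centred symmetric matrices $Z_j$, bound $\|Z_j\|_\infty$ by $C(w)/\xi_U$, bound $\|\E[Z_1^2]\|_\infty$ by $C'(w)\,U^{-d}\|\phi\|_{L^1(B_U^d)}/\xi_U^2$, and apply Recht's noncommutative Bernstein inequality. The only noteworthy difference is in how the variance bound is obtained. You pull out $|\mathscr{L}'(-\psi(u))|^{-1}$ and $|u|^{-2}$ uniformly and then control $\iint|\phi(u\pm v)|w_U(u)w_U(v)\,\d u\,\d v$ via the convolution estimate $\|w_U\ast w_U\|_\infty\le U^{-d}\|w\|_\infty\|w\|_{L^1}$, treating the product-correction term $\bigl(\int|\phi|w_U\bigr)^2$ separately by the trivial bound $U^{-d}\|\phi\|_{L^1(B_U^d)}\le|B_1^d|$. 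The paper instead uses the symmetry of $w$ (and parity in $v$) to reduce to a single kernel $\phi(u+v)-\phi(u)\phi(-v)$ and then applies the Cauchy--Schwarz symmetrisation
\[
  \iint |f(u)f(v)g(u,v)|\,\d u\,\d v\le \int|f(u)|^2\int|g(u,v)|\,\d v\,\d u,
\]
with $f(u)=w_U(u)/(|u|^2\mathscr{L}'(-\psi(u)))$, keeping the weight $w_U^2/|u|^4$ inside the final integral. Both routes give the same bound up to constants depending only on $w$; the paper's trick avoids the separate treatment of the correction term, while your convolution argument is slightly more direct once the scalar factors have been extracted.
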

\begin{proof}
  We write
  \[
    L_n=\frac{1}{n}\sum_{j=1}^n C_j\quad\text{with}\quad C_j:=\int_{\R^{d}}\Re\Big(\frac{e^{i\langle u,Y_j\rangle}-\phi(u)}{|u|^2\mathscr{L}'(-\psi(u))}\Big)\Theta(u)w_{U}(u)\d u,
  \]
  where $C_j$ are independent, centred and symmetric matrices in $\R^{d\times d}$. In order to apply the noncommutative Bernstein inequality in \cite[Thm. 4]{recht2011}, we need to bound $\|C_j\|_\infty$ and $\|\E[C_j^2]\|_\infty$. Since $\|\Theta(u)\|_\infty=1$, we have
  \[
    \|C_j\|_\infty\le\int_{\R^{d}}\frac{2}{|u|^2|\mathscr{L}'(-\psi(u))|}w_{U}(u)\d u\le2\xi^{-1}_U\int_{\R^d}|v|^{-2}w(v)\d v. 
  \]
  Using that $\Re (z_1)\Re( z_2)=\frac12(\Re(z_1z_2)+\Re(z_1\bar z_2))$ for $z_1,z_2\in\C$ and symmetry in $v$, the Variance of $C_j$ is bounded as follows:
  \begin{align*}
  \E[C_j^2]
  = &\int_{\R^{d}}\int_{\R^{d}}\E\Big[\Re\Big(\frac{e^{i\langle u,Y_{1}\rangle}-\phi(u)}{\mathscr{L}'(-\psi(u))}\Big)
  \Re\Big(\frac{e^{i\langle v,Y_{1}\rangle}-\phi(v)}{\mathscr{L}'(-\psi(v))}\Big)\Big]\frac{\Theta(u)}{|u|^2}\frac{\Theta(v)}{|v|^2} w_{U}(u)w_{U}(v)\d u\d v\\
  = & \int_{\R^{d}}\int_{\R^{d}}\Re\Big(\frac{\phi(u+v)-\phi(u)\phi(v)}{\mathscr{L}'(-\psi(u))\mathscr{L}'(-\psi(v))}\Big)\frac{\Theta(u)}{|u|^2}\frac{\Theta(v)}{|v|^2} w_{U}(u) w_{U}(v)\d u\d v.
  \end{align*}
  To estimate $\|\E[C_j^2]\|_\infty$ we bound the spectral norm of the integral by the integral over the spectral norms (Minkowski inequality). Moreover, we use that for any functions $f\colon\R^d\to\C$ and $g\colon\R^d\times\R^d\to\C$ with $|g(u,v)|=|g(v,u)|$ the Cauchy-Schwarz inequality and Fubini's theorem yield
  \begin{align}
    \int_{\R^d}\int_{\R^d}|f(u)f(v)g(u,v)|\d u\d v
    &\le\big\|f(u)\sqrt{|g(u,v)|}\big\|_{L^2(\R^{2d})}\big\|f(v)\sqrt{|g(u,v)|}\big\|_{L^2(\R^{2d})}\notag\\
    &=\int_{\R^d} |f(u)|^2\int_{\R^d}|g(u,v)|\d v\d u.\label{eq:CStrick}
  \end{align}
  Taking into account the compact support of $w_U$ and applying the previous estimate to the functions $f(u)=w_{U}(u)/(|u|^{2}\mathscr{L}'(-\psi(u)))$ and $g(u,v)=(\phi(u+v)-\phi(u)\phi(v))\1_{\{|u|\le U/2\}}\1_{\{|v|\le U/2\}}$, we obtain
  \begin{align*}
  \|\E[C_j^2]\|_\infty\le & \int_{\R^{d}}\frac{w_{U}(u)^{2}}{|u|^{4}|\mathscr{L}'(-\psi(u))|^{2}}\int_{|v|\le U/2}\big(|\phi(u+v)|+|\phi(u)\phi(-v)|\big)\d v\,\d u\\
  \le&2\|\phi\|_{L^1(B_U^d)}\int_{\R^{d}}\frac{w_{U}(u)^{2}}{|u|^{4}\mathscr{L}'(-\psi(u))^{2}}\d u.
  \end{align*}
  Using $w_{U}(u)=U^{-d}w(u/U)$, we conclude
  \begin{align*}
  \|\E[C_j^2]\|_\infty\le & 2\|\phi\|_{L^1(B_U^d)}\Big(\inf_{|u|\le U}\mathscr{L}'(-\psi(u))\Big)^{-2}\int_{\R^{d}}|u|^{-4}w_{U}(u)^{2}\d u\\
  \le & 2\xi_U^{-2}U^{-d}\|\phi\|_{L^1(B_U^d)}\int_{\R^d}|v|^{-4}w(v)^2\d v.
  \end{align*}
  Consequently, Theorem~4 from \cite{recht2011} yields
  \begin{align*}
   &\mathbb P\Big(\|L_n\|_\infty\ge \frac{\kappa}{\sqrt n\xi_U}\frac{\|\phi\|^{1/2}_{L^1(B_U^d)}}{U^{d/2}}\Big)\le 2d\exp\Big(-\frac{c\kappa^2}{1+\kappa(\|\phi\|_{L^1(B_U^d)}/(nU^d))^{1/2}}\Big)
  \end{align*}
  for some constant $c>0$ depending only on $w$.
\end{proof}

\begin{proof}[Proof of Theorem~\ref{thm:conR}] 
We write again $q=1$ if Assumption~\ref{ass:time}(iv) is satisfied and $q=0$ otherwise. Applying Lemmas \ref{lem:ApproxError} and \ref{lem:linearisation} we deduce from \eqref{eq:decomp} on the event $\mathcal H_{n,U}$, defined the linearisation lemma, 
\begin{align*}
  \|\mathcal R_n\|_\infty
  \le&4\Big\|\int_{\R^{d}}\frac{1}{|u|^2}\Re\big(\mathscr{L}^{-1}(\phi_{n}(u))-\mathscr{L}^{-1}(\phi(u))\big)\Theta(u) w_{U}(u)\d u\Big\|_\infty\\
  &\qquad+4\int_{\R^{d}}\frac{|\Re\Psi(u)+\alpha|}{|u|^2}w_{U}(u)\d u\\
  \le& 4 \|L_n\|_\infty+ 16C_L\int_{\R^d}\frac{|\phi_n(u)-\phi(u)|^2}{|u|^{2}|\psi(u)|^q|\mathscr L'(-\psi(u))|^2}w_{U}(u)\d u+DC_\nu U^{-s-2}\\
  \le& 4 \|L_n\|_\infty+ 16C_L\frac{\|\phi_n-\phi\|_{U}^2}{\inf_{|u|\le U}|\psi(u)|^q|\mathscr L'(-\psi(u))|^2}\int_{\R^d}|u|^{-2}w_{U}(u)\d u+DC_\nu U^{-s-2}\\
  \le&4 \|L_n\|_\infty+DC_L\frac{\|\phi_n-\phi\|_{U}^2}{U^{2}\inf_{|u|\le U}|\psi(u)|^q|\mathscr L'(-\psi(u))|^2}+DC_\nu U^{-s-2}
\end{align*}
for some constant $D>0$ depending only on $w$ and $s$. Writing again $\xi_U:=U^2\inf_{|u|\le U}|\mathscr{L}'(-\psi(u))|$ and defining $\zeta_U:=U^{2}\inf_{|u|\le U}|\psi(u)|^q|\mathscr L'(-\psi(u))|^2$, we obtain
\begin{align}
  &\P\Big(\|\mathcal R_n\|_\infty\ge \frac{\kappa}{\sqrt n\xi_U}\sqrt{U^{-d}\|\phi\|_{L^1(B_U^d)}}+DC_\nu U^{-s-2}\Big)\notag\\
  \le&\P\Big(\|L_n\|_\infty\ge\frac{\kappa}{8}\frac{\|\phi\|_{L^1(B_U^d)}^{1/2}}{\sqrt nU^{d/2}\xi_U}\Big)
  +\P\Big(\|\phi_n-\phi\|_{U}^2\ge\frac{\kappa}{2DC_L}\frac{\|\phi\|_{L^1(B_U^d)}^{1/2}\zeta_U}{U^{d/2}\sqrt n\xi_U}\Big)+\P(\mathcal H_{n,U}^c).\label{eq:remProb}
\end{align}
The first probability is bounded by Lemma~\ref{lem:ConcLin}. Defining  
\[
  \delta_n:=\frac{\sqrt n\|\phi\|_{L^1(B_U^d)}^{1/2}\zeta_U}{U^{d/2}\xi_U}
  =\frac{\sqrt n\|\phi\|_{L^1(B_U^d)}^{1/2}\inf_{|u|\le U}|\psi(u)|^q|\mathscr L'(-\psi(u))|^2}{U^{d/2}\inf_{|u|\le U}|\mathscr{L}'(-\psi(u))|}
\]
and using that $\kappa\le\delta_n$ by assumption, we can bound the second probability in \eqref{eq:remProb} by Theorem~\ref{thm:concPhi}:
\[
  \P\Big(\big(\sqrt n\|\phi_n-\phi\|_{U}\big)^2\ge\frac{\kappa\delta_n}{2DC_L}\Big)
  \le\P\Big(\big(\sqrt n\|\phi_n-\phi\|_{U}\big)^2\ge\frac{\kappa^2}{2DC_L}\Big)
  \le 2 e^{-c\kappa^2},
\]
for some numerical constant $c>0$, provided $\kappa\ge \sqrt {d\log(d+1)}(\log U)^{\rho}$ for  some $\rho>1/2$ and $\kappa\le \sqrt n$. The probability of the complement of $\mathcal{H}_{n,U}$ can be similarly estimated by 
\begin{align*}
\P(\mathcal{H}_{n}^{c}) 
\le & \P\left(\sqrt{n}\left\Vert \phi_{n}-\phi\right\Vert _{U}\ge\tfrac C2\sqrt nU^{-2}\xi_U\right)
\end{align*}
which yields the claimed bound owing to Theorem~\ref{thm:concPhi} and $\kappa^2\le n\xi_U^2U^{-4}$.
\end{proof}

\subsection{Proof of the lower bounds: Theorem~\ref{thm:lowerBound}}

We follow the standard strategy to prove lower bounds adapting some ideas by \citet[Chap. 1]{belomestny2015estimation}. We start with the proof of (i) which is divided into several steps.

\emph{Step 1:} We need to construct two alternatives of L\'evy triplets. 
Let $K\colon\mathbb{R}^k\to\mathbb{R}$ be a kernel given via its Fourier transform 
\[
\mathcal{F}K(u)=\begin{cases}
1, & |u|\le1,\\
\exp\left(-\frac{e^{-1/(|u|-1)}}{2-|u|}\right), & 1<|u|<2,\\
0, & |u|\geq2,
\end{cases}\quad u\in\R^k.
\]
Since $\mathcal F K$ is real and even, $K$ is indeed a real valued function. For each $n$ we define two jump measures $\nu_0$ and $\nu_n$ on $\R^d$ via their Lebesgue density, likewise denoted by $\nu_0$ and $\nu_n$, respectively. Slightly abusing notation we define the densities on $\R^k$ and set the remaining $d-k$ coordinates equal to zero. Denoting the Laplace operator by $\Delta:=\sum_{j=1}^k\partial_j^2$, we set
\begin{align*}
  \nu_0(x):=&\Big(1+\sum_{j=1}^k|x_j|^{2L}\Big)^{-1},\quad x=(x_1,\dots,x_k)^\top\in\R^k,\\
  \nu_n(x):=&\nu_0(x)+a\delta_n^{s-k}(\Delta K)(x/\delta_n),\quad x\in\R^k,
\end{align*}
for $L\in\N$ such that $2L>k+p\vee(-s)$ and $L>k$, some positive sequence $\delta_n\to0$, to be chosen later and a sufficiently small constant $a>0$. Since for any $l\in\mathbb{N}$ it holds $|x|^{l}|\Delta K(x)|\le C_{l}$ uniformly and due to the assumption $k\le s$, $\nu_0$ and $\nu_n$ are non-negative finite measures. In particular, they are L\'evy measures.

By construction $\nu_0\in\mathfrak S(s,p,C_\nu)$ for any $s>-2,p>0$ and some $C_\nu>0$ (by rescaling $C_\nu$ can be arbitrary). To verify that $\nu_n\in\mathfrak S(s,p,C_\nu)$ holds for some sufficiently small $a>0$ and for all $n\in\N$, we first note that \(\int_{\R^k}|x|^{-s}\delta_{n}^{s-k}|\Delta K|(x/\delta_{n})\d x=\int_{\R^k}|y|^{-s}|\Delta K|(y)\d y\) for $s\in(-2,0]$. In the case $s>0$ we use
\begin{align*}
  (1+|u|^2)^{s/2}\big|\F\big[\delta_n^{s-k}\Delta K(x/\delta_n)\big](u)\big|
  &=\delta_n^{s+2}(1+|u|^2)^{s/2}|u|^2\big|\F K(\delta_n u)\big|\\
  &\le\delta_n^{s+2}(1+\delta_n^{-2})^{s/2}\delta_n^{-2}\lesssim 1,
\end{align*}
owing to the compact support of $\F K$.

Now define the rank $k$ diagonal matrix $\Sigma_0=\operatorname{diag}(1,\dots,1,0,\dots,0)$ (i.e., $k$ ones followed by $d-k$ zeros) and its perturbation $\Sigma_n:=(1+2a\delta_n^{2+s})\Sigma_0$. Finally define 
\[
  Y_{t}^{(0)}=\Sigma_{0}\, W_{t}+Z^{(0)}_{t}\quad\text{and}\quad 
  Y_{t}^{(n)}=\Sigma_{n}\, W_{t}+Z_{t}^{(n)}
\]
with a Brownian motion $W_t$ and with $Z_{t}^{(0)}$ and $Z_{t}^{(n)}$ being compound Poisson processes independent of $W_t$, with jump measures $\nu_{0}$ and $\nu_{n}$, respectively. 

\emph{Step 2:} We now bound the $\chi^2$ distance of the observation laws $\mathbb P_0^{\otimes n}:=\mathbb P^{\otimes n}_{(\Sigma_0,\nu_0,\mathcal T)}$ and ${\mathbb P}_n^{\otimes n}:=\mathbb P^{\otimes n}_{(\Sigma_n,\nu_n,\mathcal T)}$. First we observe that both laws are equal on the last $d-k$ coordinates, namely being a Dirac measure in zero. Owing to the diffusion component, the marginals $\mathbb P_0$ and ${\mathbb P}_n$ admit Lebesgue densities on $\R^k$ denoted by $f_0$ and $f_n$, respectively (cf. \cite[Thm. 27.7]{sato1999levy}). Since the observations are i.i.d., $\chi^2({\mathbb P}_n^{\otimes n},{\mathbb P}_0^{\otimes n})$ is uniformly bounded in $n$, if
\begin{align*}
  n\chi^2(\mathbb P_n,{\mathbb P}_0)
  = n\int_{\R^k}\frac{|f_n(x)-f_0(x)|^2}{f_0(x)}\d x<c
\end{align*}
for some constant $c>0$. The density $f_0$ is given by $f_0(x)=\int_{\R^+}p_t(x)\pi(\d t)$, where $p_t$ denotes the density of $Y_t^{(0)}$. Since $Y^{0}$ is of compound Poisson type, its marginal density is given by the convolution exponential
\begin{align*}
  p_t(x)&=\mu_{0,t\Sigma_0}\ast\Big(e^{-t\nu_0(\R^k)}\sum_{j=0}^\infty \frac{t^j\nu_0^{\ast j}}{j!}\Big)(x)
  \ge te^{-t\nu_0(\R^k)}(\mu_{0,t\Sigma_0}\ast\nu_0)(x)
\end{align*}
with the density $\mu_{0,t\Sigma_0}$ of the $\mathcal N(0,t\Sigma_0)$-distribution. Using that there is some interval $[r,s]\subset (0,\infty)$ with $\pi([r,s])>0$ and that $\nu_0$ is independent of $t$, we obtain
\[
  f_0(x)\gtrsim \big( \mu_{0,r\Sigma_0}\ast\nu_0\big)(x)
    \gtrsim \Big(1+\sum_{j=1}^k|x_j|^{2L}\Big)^{-1},\quad x=(x_1,\dots,x_k)^\top\in\R^k.
\]
By Plancherel's identity we thus have
\begin{align*}
  \chi^2(\mathbb P_n,{\mathbb P}_0)
  &\le\int_{\R^k}\Big(1+\sum_{j=1}^k|x_j|^{2L}\Big)\big|f_n(x)-f_0(x)\big|^2\d x\\
  &\lesssim \|\phi_n-\phi_0\|_{L^2(\R^k)}^2+\sum_{j=1}^k\big\|\partial_j^L(\phi_n-\phi_0)\big\|_{L^2(\R^k)}^2,
\end{align*}
where $\phi_0$ and $\phi_n$ denote the characteristic functions of $\mathbb P_0$ and $\mathbb P_n$, respectively.

\emph{Step 3:} We have to estimate the distance of the characteristic functions. Let us denote the characteristic exponents of the L\'evy processes $Y_t^{(0)}$ and $Y_t^{(n)}$ (restricted on the first $k$ coordinates) by $\psi_0$ and $\psi_n$, respectively. Then,
\begin{align*}
  \psi_m(u)=&-\frac{1}{2}\langle u,\Sigma_mu\rangle+\int_{\R^k}\big(e^{i\langle u,x\rangle}-1-i\langle u,x\rangle\big)\nu_m(x)\d x,\quad m\in\{0,n\}
\end{align*}
Note that $\psi_m$ is real valued because $\nu_m$ is even. Using Taylor's formula, we obtain
\begin{align*}
  \phi_n(u)-\phi_0(u)
  &=\mathscr L(-\psi_n(u))-\mathscr L(-\psi_0(u))\\
  &=-\big(\psi_n(u)-\psi_0(u)\big)\int_0^1\mathscr L'\big(-\psi_0(u)-t(\psi_n(u)-\psi_0(u))\big)\d t.
\end{align*}
Defining $\Psi_{n,t}(u):=-\psi_0(u)-t(\psi_n(u)-\psi_0(u))$, we thus have 
\begin{align*}
  \partial_j^L(\phi_n-\phi_0)(u)=\sum_{r=0}^L\partial_j^r\big(\psi_n(u)-\psi_0(u)\big)\int_0^1\partial_j^{L-r}\mathscr L'(\Psi_{n,t}(u))\d t,
\end{align*}
where the partial derivatives of the composition $\mathscr L'\circ\Psi_{n,t}$ can be computed with Fa\`{a} di Bruno's formula. Since $\int(e^{i\langle u,x\rangle}-1-i\langle u,x\rangle)\Delta K(x/\delta_n)\d x=-\delta_n^{2+k}|u|^2\mathcal{F}K(\delta_n u)$, we have
\[
\psi_{n}(u)-\psi_{0}(u)=a\delta_{n}^{2+s}|u|^{2}(1-\mathcal{F}K(\delta_{n}u))
\]
and in particular $\Psi_{n,t}(u)=-\psi_0(u)(1+o(1))$ uniformly over $u$ and $t$ for $\delta_n\to0$.
Taking into account the properties 
\begin{align*}
&\mathcal{F}K(u)=0 \text{ for } |u|>2/\delta_n, 
\\
&\partial_j^r\mathcal{F}K(u)=0 \text{ for } |u|\le 1/\delta_n \text{ and } |u|>2/\delta_n, \quad r=1,\dots,L ,
\\
&|\partial_j^r\mathcal{F}K(u)|\le C \text{ for } 1/\delta_n<|u|\le 2/\delta_n, \quad r=0,1,\dots,L, 
\end{align*}
for all $j=1,\dots,k$ and some \(C>0\), we see that $\psi_n(u)-\psi_0(u)$ is zero for $|u|<1/\delta_n$ and that $|\partial_j\psi_m(u)|\lesssim 1+|u_j|,|\partial_j^r\psi_m(u)|\lesssim 1$ for $r=2,\dots,L$ and $m\in\{0,n\}$.
We conclude
\begin{align*}
  &\big\|\partial_j^L(\phi_n-\tilde\phi_n)\big\|_{L^2(\R^k)}^2
  \lesssim a\delta_n^{2s+4}\int_{|u|>1/\delta_n}|u|^4
  \sum_{r=0}^{L}\big|\mathscr L^{(1+r)}\big(-\psi_{n}(u)(1+o(1))\big)\big|^2|u|^{2r}\d u.
\end{align*}
Due to monotonicity of $\mathscr{L}'(-x)$ for $x>0$ and $\mathscr{L}^{(r+1)}(x)/\mathscr{L}^{(r)}(x)=O(1/|x|)$ for $|x|\to\infty$, $r=1,\dots,L$, the previous estimate and Step 2 yield as $n\to \infty$ 
\begin{align*}
n\chi^2(\mathbb P_n,{\mathbb P}_0)
& \lesssim an\delta_{n}^{2(s+2)}\int_{|u|>1/\delta_{n}}\left|\mathscr{L}'(-\psi_{0}(u))\right|^{2} |u|^4\d u
  \lesssim an\delta_{n}^{2s+4+4\eta-k}
\end{align*} 
if $4\eta>k$. Hence, $\chi^2(\mathbb P_n^{\otimes n},\mathbb P_0^{\otimes n})$ remains bounded for $\delta_n=n^{-1(2s+4+4\eta-k)}$ and with some sufficiently small $a>0$.

\emph{Step 4:} Noting that
\[
\big\|\Sigma_{0}-\tilde\Sigma_{n}\big\|_2=2\sqrt ka\delta_{n}^{s+2},
\]
the first lower bound in Theorem~\ref{thm:lowerBound} follows from Theorem 2.2 in \cite{tsybakov2009}. 

\emph{Step 5:}
For the second case, i.e., $k>2\eta$, we modify our construction as follows: We use the jump measures $\nu_0$ and $\nu_n$ from before, but only on the first derivative. We use the same rank $k$ diffusion matrix $\Sigma_0$ with $k$ ones on the diagonal, but choose the alternative as $\Sigma_n=\diag(1+2a\delta_n^{2+s},1,\dots,1,0,\dots,0)$ where the last $d-k$ entries are zero. Since the corresponding laws $\mathbb P_0$ and $\mathbb P_n$ are product measures which differ only on the first coordinate, the calculations from Step~2 and 3 yield
\begin{align*}
  n\chi^2(\mathbb P_n,\mathbb P_0)
  &\lesssim an\delta_n^{2(s+2)}\int_{\R\setminus[-1/\delta_n,1/\delta_n]}|\mathscr L'(-\psi_0(u))|^2|u|^4\d u\\
  &\le an\delta_n^{2(s+2)}\sup_{|u|>1/\delta_n}\big\{|\mathscr L'(-\psi_0(u))||u|^2\big\}\int_{\R}|\mathscr L'(-\psi_0(u))||u|^2\d u\\
  &\lesssim an\delta_n^{2(s+2)+2\eta}\int_\R(1+|u|^2)^{-\eta-1}|u|^2\d u,
\end{align*}
where the integral is finite by the assumption $\eta>1/2$. Hence, we have shown the second lower bound Theorem~\ref{thm:lowerBound}(i). 

\vspace*{1em}
The result in (ii) can be deduced analogously, choosing $k=1$. In Step~3 we obtain under the corresponding assumption on $\mathscr L$ that with some constant $c>0$
\begin{align*}
n\chi^2(\mathbb P_n,{\mathbb P}_0)
& \lesssim an\delta_{n}^{2(s+2)}\int_{|u|>1/\delta_{n}}\left|\mathscr{L}'(-\psi_{0}(u))\right|^{2} |u|^4(1+|u|^{2L})\d u\\
 & \lesssim an\delta_{n}^{2s+4}e^{-c\delta_n^{-2\eta}}
\end{align*} 
which remains bounded if $\delta_n\sim (\log n)^{-1/(2\eta)}$.
\qed

\subsection{The mixing case: Proof of Theorem \ref{thm:conMix}}\label{sec:proodmixing}
We denote again $\xi_U:=U^2\inf_{|u|\le U}|\mathscr L'(-\psi(u))|$ and recall the event $\mathcal H_{n,U}$ defined in Lemma~\ref{lem:linearisation}. Applying \eqref{eq:EstMix} and Lemmas~\ref{lem:ApproxError} and \ref{lem:linearisation}, we obtain on $\mathcal H_{n,U}$
\begin{align*}
  \|\mathcal R_n\|_\infty
  \le& 4\int_{\R^d}|u|^{-2}\big|\Re\big(\mathscr L^{-1}(\phi_n(u))-\mathscr L^{-1}(\phi(u))\big)\big|w_U(u)\d u+4\int_{\R^d}|u|^{-2}|\Psi(u)+\alpha|w_U(u)\d u\\
  \le& 4\int_{\R^d}\frac{|\phi_n(u)-\phi(u)|}{|u|^2|\mathscr L'(-\psi(u))|}w_U(u)\d u
  +16C_L\int_{\R^d}\frac{|\phi_n(u)-\phi(u)|^2}{|u|^2|\mathscr L'(-\psi(u))|^2}w_U(u)\d u\\
  &\qquad+4\int_{\R^d}|u|^{-2}|\Psi(u)+\alpha|w_U(u)\d u\\
  \le&C\Big( \xi_U^{-1}\|\phi_n-\phi\|_U+U^2\xi_U^{-2}\|\phi_n-\phi\|_U^2 +U^{-s-2}\Big)
\end{align*}
where the constant $C>0$ depends only on $w,C_L$ and $C_\nu$. Therefore,
\begin{align*}
  &\P\Big(\|\mathcal R_n\|_\infty\ge \frac{\kappa(\log U)^{\rho}}{\sqrt n\xi_U}+CU^{-s-2}\Big)\\
  \le& \P\Big(\sqrt n(\log U)^{-\rho}\|\phi_n-\phi\|_U\ge \frac{\kappa}{2C}\Big)
    +\P\Big(\big(\sqrt n(\log U)^{-\rho}\|\phi_n-\phi\|_U\big)^2\ge \frac{\sqrt n\kappa\xi_U}{2CU^2(\log U)^{\rho}\xi_U}\Big)+\P(\mathcal H_{n,U}^c).
\end{align*}
If $\kappa\le\sqrt n\xi_U(\log U)^{-\rho} U^{-2}$, we have 
\[
  \P(\mathcal H_{n,U}^c)\le\P\Big(\sqrt n(\log U)^{-\rho}\|\phi_n-\phi\|_U\ge \frac{2\kappa}{C_L}\Big)  
\]
Theorem~\ref{thm:concentration} yields
\[
  \P\Big(\|\mathcal R_n\|_\infty\ge \frac{\kappa(\log U)^{\rho}}{\sqrt n\xi_U}+CU^{-s-2}\Big)\lesssim e^{-c\kappa^2}+n^{-p/2}
\]
some $c>0$ and for any $n\in\N$ and $\kappa\in(\underline \xi\sqrt{d\log n},\overline\xi\sqrt n/\log^2n)$.\qed

\appendix

\section{Multivariate uniform bounds for the empirical characteristic function}\label{appendix}

\subsection{I.i.d. sequences}

Let us recall the usual multi-index notation. For a multi-index $\beta=(\beta_{1},\dots,\beta_{d})\in\N^{d}$,
a vector $x=(x_{1},\dots,x_{d})\in\R^{d}$ and a function $f\colon\R^{d}\to\R$
we write 
\begin{gather*}
|\beta|:=\beta_{1}+\dots+\beta_{d},\quad x^{\beta}:=x_{1}^{\beta_{1}}\cdots x_{d}^{\beta_{d}},\quad|x|^{\beta}:=|x_{1}|^{\beta_{1}}\cdots|x_{d}|^{\beta_{d}},\\
\partial^{\beta}f:=\partial_{1}^{\beta_{1}}\cdots\partial_{d}^{\beta_{d}}f.
\end{gather*}
We need a multivariate (straight
forward) generalisation of Theorem~4.1 by \citet{neumannReiss2009}.
For a sequence of independent random vectors $(Y_{j})_{j\ge1}\subset\R^{d}$
we define the empirical process corresponding to the empirical characteristic
function by 
\[
C_{n}(u):=\sqrt{n}(\phi_{n}(u)-\phi(u))=n^{-1/2}\sum_{j=1}^{n}\big(e^{i\langle u,Y_{j}\rangle}-\E[e^{i\langle u,Y_{1}\rangle}]\big),\quad u\in\R^{d},n\ge1.
\]

\begin{prop} \label{prop:uniformBound}Let $\beta\in\N^{d}$ be a
multi-index and let $Y_{1},\dots,Y_{n}\in\R^{d}$ be iid. $d$-dimensional
random vectors satisfying $\E[|Y_{1}|^{2\beta}|Y_{1}|^{\gamma}]<\infty$
for some $\gamma>0$. Using the weight function $w(u)=(\log(e+|u|)^{-1/2-\delta},u\in\R^{d}$,
for some $\delta>0$, there is a constant $C>0$ such that 
\[
\sup_{n\ge1}\E[\|w(u)\partial^{\beta}C_{n}(u)\|_{\infty}]\le C\sqrt{d}(\sqrt{\log d}+1).
\]
\end{prop}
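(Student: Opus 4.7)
The plan is to extend the one-dimensional result of Neumann--Reiss (\cite{neumannReiss2009}, Theorem~4.1) to $d$-dimensional frequencies while tracking the dependence on the dimension. The starting point is that
\[
\partial^{\beta} C_n(u) = \frac{i^{|\beta|}}{\sqrt n}\sum_{j=1}^n\bigl(Y_j^{\beta} e^{i\langle u,Y_j\rangle} - \E[Y_1^{\beta} e^{i\langle u,Y_1\rangle}]\bigr)
\]
is a centred empirical process whose summands are bounded by $2|Y_j|^{\beta}$ in modulus and, for fixed $u$, have variance at most $\E|Y_1|^{2\beta}$. By Bernstein's inequality it therefore has subgaussian tails, with variance proxy uniform in $u$ and $n$.

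First, I would decompose $\R^d=\bigcup_{k\ge 0} A_k$ dyadically, with $A_0=\{|u|\le 1\}$ and $A_k=\{2^{k-1}<|u|\le 2^k\}$ for $k\ge 1$, so that $w(u)$ is comparable to $(k\vee 1)^{-1/2-\delta}$ on $A_k$. On each annulus I would apply a chaining/Dudley argument using the modulus of continuity
\[
\bigl(\E|\partial^{\beta} C_n(u)-\partial^{\beta} C_n(v)|^2\bigr)^{1/2} \lesssim |u-v|^{1\wedge \gamma}\bigl(\E[|Y_1|^{2|\beta|}(1+|Y_1|^{2\wedge 2\gamma})]\bigr)^{1/2},
\]
which follows from $|e^{i\langle u,y\rangle}-e^{i\langle v,y\rangle}|\le 2\wedge(|y|\cdot|u-v|)$ combined with the moment assumption $\E[|Y_1|^{2\beta}|Y_1|^\gamma]<\infty$. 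Since the Euclidean covering number of $A_k$ by $\varepsilon$-balls is of order $(2^k/\varepsilon)^d$, truncating Dudley's integral at the intrinsic diameter (bounded by a constant thanks to the uniform pointwise variance) yields $\E\sup_{u\in A_k}|\partial^{\beta} C_n(u)|\lesssim \sqrt d\,(\sqrt k+\sqrt{\log d})$.

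To aggregate over annuli I would exploit the subgaussian concentration of each $\sup_{A_k}|\partial^{\beta} C_n|$ around its mean together with the weight decay $w(u)\lesssim (k\vee 1)^{-1/2-\delta}$: the summability of $(k\vee 1)^{-1-2\delta}$ in the tail bounds makes a union-bound across $k$ converge, so that the expected weighted supremum is governed by the largest single contribution, which occurs at $k=O(1)$ and is of the announced order $\sqrt d(\sqrt{\log d}+1)$. The main obstacle is keeping the dimension dependence sharp: one must carefully balance the Lipschitz regime and the pointwise-variance regime in Dudley's integral, and exploit that the weight is essentially constant along each annulus so that the $\sqrt d$ factor is not inflated by the aggregation step. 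Uniformity in $n$ is automatic because every constant in the argument depends only on moments of $Y_1$, not on the sample size.
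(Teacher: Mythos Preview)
There is a genuine gap in the aggregation step. Your claim that $\partial^\beta C_n(u)$ has subgaussian tails ``by Bernstein'' fails as soon as $\beta\ne0$: the summands $Y_j^\beta e^{i\langle u,Y_j\rangle}$ are bounded only by the \emph{random} quantity $|Y_j|^\beta$, and under the stated moment hypothesis Bernstein's inequality does not apply and the tails are not subgaussian. The same obstruction blocks Talagrand-type concentration of $\sup_{A_k}|\partial^\beta C_n|$ around its mean with a $k$-independent subgaussian parameter, which is exactly what your union bound over annuli needs in order to make $\sum_k\exp(-ct^2k^{1+2\delta})$ summable. The naive alternative $\E\sup_k w_k\sup_{A_k}|\cdot|\le\sum_k w_k\,\E\sup_{A_k}|\cdot|\sim\sqrt d\sum_k k^{-\delta}$ diverges, so the aggregation cannot be carried out at the level of first moments either.

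The paper sidesteps both issues by computing $L^2$-\emph{bracketing} numbers of the weighted class $\{y\mapsto w(u)\,\partial_u^\beta\cos\langle u,y\rangle:u\in\R^d\}$ globally and applying the maximal inequality $\E\sup|\cdot|\lesssim J_{[]}$ (van der Vaart, Lemma~19.35), which needs no concentration input and no separate aggregation. The weight is absorbed by observing that a single bracket $[-\eps|y|^\beta,\eps|y|^\beta]$ already covers every $g_u$ with $w(u)\le\eps$, so only a grid on $\{|u|\le B(\eps)\}$ with $B(\eps)\sim\exp(\eps^{-1/(1/2+\delta)})$ is required. The unbounded envelope $|y|^\beta$ is handled by a truncation at level $M(\eps)$ chosen so that $\E[|Y_1|^{2\beta}\1_{\{|Y_1|>M\}}]\le\eps^2$---this is precisely where the extra $|Y_1|^\gamma$ moment is spent---yielding $\log N_{[]}(\eps)\lesssim d(\log d+\eps^{-2/(1+2\delta)})$, whose square root is integrable and delivers $\sqrt d(\sqrt{\log d}+1)$. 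Your route could be repaired with the same truncation idea, but once that is inserted you are essentially reproducing the bracketing construction.
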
 \begin{proof} The proof relies on a bracketing entropy
argument and we first recall some definitions. For two functions $l,u\colon\R^{d}\to\R$
a bracket is given by $[l,u]:=\{f\colon\R^{d}\to\R|l\le f\le u\}$.
For a set of functions $G$ the $L^{2}$-bracketing number $N_{[]}(\epsilon,G)$
denotes the minimal number of brackets $[l_{k},u_{k}]$ satisfying
$\E[(u_{k}(Y_{1})-l_{k}(Y_{1}))^{2}]\le\eps^{2}$ which are necessary
to cover $G$. The bracketing integral is given by 
\[
J_{[]}(\delta,G):=\int_{0}^{\delta}\sqrt{N_{[]}(\eps,G)}\d\eps.
\]
A function $F\colon\R^{d}\to\R$ is called envelop function of $G$
if $|f|\le F$ for any $f\in G$.\par Decomposing $C_{n}$ into the
real and the imaginary part, we consider the set $G_{\beta}:=\{g_{u}:u\in\R^{d}\}\cup\{h_{u}:u\in\R^{d}\}$
where 
\[
g_{u}\colon\R^{d}\to\R,y\mapsto w(u)\frac{\partial^{\beta}}{\partial u^{\beta}}\cos(\langle u,y\rangle),\quad h_{u}\colon\R^{d}\to\R,y\mapsto w(u)\frac{\partial^{\beta}}{\partial u^{\beta}}\sin(\langle u,y\rangle).
\]
Noting that $G_{\beta}$ has the envelop function $F(y)=|y|^{\beta}$,
Lemma~19.35 in \citet{vanderVaart1998} yields 
\[
\E[\|w(u)\partial^{\beta}C_{n}(u)\|_{\infty}]\lesssim J_{[]}(\E[F(Y_{1})^{2}],G_{\beta}).
\]
Since the real and the imaginary part can be treated analogously,
we concentrate in the following on $\{g_{u}:u\in\R^{d}\}$. Owing
to $|g_{u}(y)|\le w(u)|y|^{\beta}$, we have $\{g_{u}:|u|>B\}\subset[g_{0}^{-},g_{0}^{+}]$
for $g_{0}^{\pm}(y):=\pm\eps|y|^{\beta}$ and 
\[
B:=B(\eps):=\inf\{b>0:\sup_{|u|\ge b}w(u)\le\eps\}
\]
To cover $\{g_{u}:|u|\le B\}$, we define for some grid $(u_{j})_{j\ge1}\subset\R^{d}$
and $j\ge1$ 
\[
g_{j}^{\pm}(y):=\big(w(u_{j})\frac{\partial^{\beta}}{\partial u^{\beta}}\cos(\langle u_j,y\rangle)\pm\eps|y|^{\beta}\big)\1_{\{|y|\le M\}}\pm|y|^{\beta}\1_{\{|y|>M\}}
\]
with 
\[
M:=M(\eps):=\inf\big\{ m>0:\E[|Y_{1}|^{2\beta}\1_{\{|Y_{1}|>m\}}]\le\eps^{2}\big\}.
\]
We have $\E[|g_{j}^{+}(Y_{1})-g_{j}^{-}(Y_{1})|^{2}]\le4\eps^{2}(\E[|Y_{1}|^{2\beta}]+1)$
for $j\ge0$. Denoting the Lipschitz constant of $w$ by $L$, it
holds 
\begin{align*}
\big|w(u)\tfrac{\partial^{\beta}}{\partial u^{\beta}}\cos(\langle u,y\rangle)-w(u_{j})\tfrac{\partial^{\beta}}{\partial u^{\beta}}\cos(\langle u_{j},y\rangle)\big| & \le|y|^{\beta}\big(L+|y|\big)|u-u_{j}|.
\end{align*}
Therefore, $g_{u}\in[g_{j}^{-},g_{j}^{+}]$ if $(L+M)|u-u_{j}|\le\eps$.
Since any (Euklidean) ball in $\R^{d}$ with radius $B$ can be covered
with fewer than $(B/\tilde{\eps})^{d}$ cubes with edge length $2\tilde{\eps}$
and each of these cubes can be covered with a ball of radius $\sqrt{d}\tilde{\eps}$
(use $|\bull|\le\sqrt{d}\|\bull\|_{\ell^{\infty}}$), we choose $\tilde{\eps}=\eps d^{-1/2}/(L+M)$
to see that 
\[
N_{[]}(\eps,G_{\beta})\le2\Big(\frac{\sqrt{d}B(L+M)}{\eps}\Big)^{d}+2.
\]
By the choice of $w$ it holds $B\le\exp(\eps^{-1/(1/2+\delta)})$
and Markov's inequality yields $M\le(\eps^{-2}\E[|Y_{1}|^{2\beta}\|Y_{1}\|^{\gamma}])^{1/\gamma}$.
The bracketing entropy is thus bounded by 
\[
\log N_{[]}(\eps,G_{\beta})\lesssim d(\log d+\eps^{-1/(1/2+\delta)}+\log(\eps^{-2/\gamma-1}))\lesssim d(\log d+\eps^{-2/(1+2\delta)})
\]
and the entropy integral can be estimated by 
\[
J_{[]}(\E[F(Y_{1})^{2}],G_{\beta})\lesssim\sqrt{d}\Big(\sqrt{\log d}+\int_{0}^{\E[|Y_{1}|^{2\beta}]}\eps^{-1/(1+2\delta)}\d\eps\lesssim\sqrt{d}(\sqrt{\log d}+1).\tag*{\qedhere}
\]
\end{proof} 
Applying Talagrand's inequality, we conclude the following concentration result, see also Proposition 3.3 in \citet[Chap. 1]{belomestny2015estimation}.
\begin{thm}\label{thm:concPhi}
  Let $Y_{1},\dots,Y_{n}\in\R^{d}$ be i.i.d. $d$-dimensional
  random vectors satisfying $\E[|Y_{1}|^{\gamma}]<\infty$ for some
  $\gamma>0$. For any $\delta>0$ there is some numerical constant $c>0$ independent
  of $d,n,U$ such that 
  \[
  \P\big(\sup_{|u|\le U}|C_{n}(u)|\ge \kappa\big)\le2e^{-c\kappa^2},
  \]
  for any $\kappa\in[\sqrt d(\sqrt{\log d}+1)(\log U)^{1/2+\delta},\sqrt{n}]$.
\end{thm}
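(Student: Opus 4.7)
Proof sketch for Theorem~\ref{thm:concPhi}.

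The plan is to combine the expectation bound coming from Proposition~\ref{prop:uniformBound} with a Talagrand-type concentration inequality for the supremum of an empirical process. First I would split $C_{n}(u)=\Re C_{n}(u)+i\Im C_{n}(u)$ and treat the real and imaginary parts separately; since $|C_{n}|\le|\Re C_{n}|+|\Im C_{n}|$, a concentration bound for each part yields one for $|C_{n}|$ up to constants. For the expectation, apply Proposition~\ref{prop:uniformBound} with the trivial multi-index $\beta=0$ and the weight $w(u)=(\log(e+|u|))^{-1/2-\delta}$ (the moment hypothesis reduces to $\E[|Y_{1}|^{\gamma}]<\infty$ for some $\gamma>0$, which is exactly what is assumed). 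Since $w(u)\ge(\log(e+U))^{-1/2-\delta}$ on $B_{U}^{d}$, one obtains
\[
\E\Big[\sup_{|u|\le U}|C_{n}(u)|\Big]\le C_{1}\sqrt{d}(\sqrt{\log d}+1)(\log(e+U))^{1/2+\delta}
\]
for some numerical constant $C_{1}>0$.

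Next I would apply Talagrand's inequality (in Bousquet's version) to the bounded, separable class of functions $\mathcal{F}=\{y\mapsto\cos\langle u,y\rangle,\sin\langle u,y\rangle:|u|\le U\}$. Every $f\in\mathcal{F}$ satisfies $\|f\|_{\infty}\le1$ and $\Var(f(Y_{1}))\le1$, which gives a weak variance bound $\sigma^{2}\le1$ and a uniform bound $b=1$ independent of $U$ and $d$. Talagrand's inequality then implies that for every $t>0$,
\[
\P\Big(\sup_{|u|\le U}|C_{n}(u)|\ge \E\big[\sup_{|u|\le U}|C_{n}(u)|\big]+t\Big)\le2\exp\Big(-c\frac{t^{2}}{1+t/\sqrt{n}}\Big),
\]
for a numerical constant $c>0$. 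In the sub-Gaussian regime $t\le\sqrt{n}$, the denominator is bounded and the tail becomes $2\exp(-c't^{2})$.

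To conclude, set $t=\kappa/2$ and restrict to $\kappa\ge 2C_{1}\sqrt{d}(\sqrt{\log d}+1)(\log(e+U))^{1/2+\delta}$, so that the expectation is absorbed into $\kappa/2$; the upper bound $\kappa\le\sqrt{n}$ keeps us in the sub-Gaussian regime. Adjusting the numerical constants (and using the equivalence of $\log U$ and $\log(e+U)$ for $U\ge1$) yields the announced bound $\P(\sup_{|u|\le U}|C_{n}(u)|\ge\kappa)\le2e^{-c\kappa^{2}}$.

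The only delicate point is making sure Proposition~\ref{prop:uniformBound}, which is formulated on all of $\R^{d}$ with the weight $w$, yields the stated $(\log U)^{1/2+\delta}$ factor after restriction to $B_{U}^{d}$: this is immediate because $\inf_{|u|\le U}w(u)=w$ evaluated at the boundary. The other steps are routine: the moment condition is only needed so that Proposition~\ref{prop:uniformBound} applies, and Talagrand's inequality requires only boundedness of the indexing class which here holds with $b=\sigma=1$.
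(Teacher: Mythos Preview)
Your proposal is correct and follows essentially the same route as the paper: both bound the expectation of the supremum via Proposition~\ref{prop:uniformBound} with $\beta=0$ (converting the weighted bound to an unweighted one on $B_U^d$ via $\inf_{|u|\le U}w(u)$), and then apply Talagrand's inequality in Bousquet's form using the uniform bound $|Z_{j,u}|\le 2n^{-1/2}$ and variance $\le1$ to obtain the sub-Gaussian tail on the range $\kappa\le\sqrt n$. Your explicit split into real and imaginary parts is a detail the paper leaves implicit, but otherwise the arguments coincide.
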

\begin{proof} 
  We will apply Talagrand's inequality in Bousquet's
  version (cf. \citet{massart2007}, (5.50)). Let $T\subset[-U,U]^{d}$
  be a countable index set. Noting that $Z_{j,u}:=n^{-1/2}(e^{i\langle u,Y_{j}\rangle}-\E[e^{i\langle u,Y_{1}\rangle}])$
  are centred and i.i.d. random variables satisfying $|Z_{k,u}|\le2n^{-1/2}$,
  for all $u\in T,k=1,\dots,n$, as well as $\sup_{u\in T}\Var(\sum_{k=1}^{n}Z_{k,u})\le1$,
  we have for all $\kappa>0$ 
  \[
  P\Big(\sup_{u\in T}\Big|\sum_{k=1}^{n}Z_{k,u}\Big|\ge4\E\Big[\sup_{u\in T}\Big|\sum_{k=1}^{n}Z_{k,u}\Big|\Big]+\sqrt{2\kappa}+\frac{4}{3}n^{-1/2}\kappa\Big)\le2e^{-\kappa}.
  \]
  Proposition~\ref{prop:uniformBound} yields $\E[\sup_{u\in T}|\sum_{k=1}^{n}Z_{k,u}|]\le C(\log U)^{1/2+\delta}\sqrt{d}(\sqrt{\log d}+1)$
  for some $\delta,C>0$. Choosing $T=\mathbb{Q^{d}}\cap[-U,U]^{d}$,
  continuity of $u\mapsto Z_{j,u}$ yields 
  \[
  \P\big(\sup_{|u|\le U}|C_{n}(u)|\ge C\sqrt{d\log d}(\log U){}^{1/2+\delta}+\sqrt{2\kappa}+\frac{4}{3}n^{-1/2}\kappa\Big)\le2e^{-\kappa}.\tag*{\qedhere}
  \]
\end{proof}

\subsection{Mixing sequences}

If the sequence is not i.i.d., but only $\alpha$-mixing, there is no Talagrand-type inequality to work with. At least \citet{MerlevedeEtAl2009} have proven to following Bernstein-type concentration result. The bound of the constant $v^{2}$ has been derived by \citet{belomestny2011}.
\begin{prop}[\citet{MerlevedeEtAl2009}] \label{EIB} Let $(X_{k},k\geq1)$ be a strongly mixing sequence of centred real-valued random variables
on the probability space $(\Omega,\mathcal{F},P)$ with mixing coefficients satisfying 
\begin{eqnarray}
\alpha(k)\le\bar{\alpha}_{0}\exp(-\bar{\alpha}_{1}k),\quad k\geq1,\quad\bar{\alpha}_{0},\bar{\alpha}_{1}>0.\label{ALPHA_EXP_DECAY}
\end{eqnarray}
If $\sup_{k\geq1}|X_{k}|\le M$ a.s., then there is a positive constant
 depending on $\bar{\alpha}_{0},$ and $\bar{\alpha}_{1}$ such that
\[
\P\Big(\sum_{k=1}^{n}X_{k}\geq\zeta\Big)\le\exp\left[-\frac{C\zeta^{2}}{nv^{2}+M^{2}+M\zeta\log^{2}(n)}\right].
\]
for all $\zeta>0$ and $n\geq4,$ where 
\begin{eqnarray*}
v^{2}:=\sup_{k}\Big(\E[X_{k}]^{2}+2\sum_{j\geq k}\Cov(X_{k},X_{j})\Big).
\end{eqnarray*}
Morover, there is a constant $C'>0$ such that 
\begin{equation}\label{eq:boundV2}
v^{2}\le\sup_{k}\E[X_{k}]^2+C'\sup_{k}\E\left[X_{k}^{2}\log^{2(1+\varepsilon)}\left(|X_{k}|^{2}\right)\right],
\end{equation}
provided the expectations on the right-hand side are finite. \end{prop}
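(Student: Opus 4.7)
The first inequality of Proposition~\ref{EIB} is the concentration theorem of \citet{MerlevedeEtAl2009}, which I take as given; only the bound \eqref{eq:boundV2} on $v^{2}$ needs a fresh proof here. Since $v^{2}=\sup_{k}(\E[X_{k}]^{2}+2\sum_{j>k}\Cov(X_{k},X_{j}))$, my task reduces to showing that $\sup_{k}\sum_{n\ge 1}|\Cov(X_{k},X_{k+n})|$ is bounded by a constant multiple of $A_{\varepsilon}:=\sup_{k}\E[X_{k}^{2}\log^{2(1+\varepsilon)}(|X_{k}|^{2})]$, where the constant is allowed to depend on $\bar\alpha_{0},\bar\alpha_{1}$ and $\varepsilon$. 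The plan is the classical truncation argument adapted to logarithmic moments.

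Fix $n\ge 1$ and put $T_{n}:=\exp(\bar\alpha_{1}n/4)$. I would decompose $X_{k}=Y_{k}^{(n)}+Z_{k}^{(n)}$ with $Y_{k}^{(n)}:=X_{k}\1_{|X_{k}|\le T_{n}}-\E[X_{k}\1_{|X_{k}|\le T_{n}}]$ and $Z_{k}^{(n)}:=X_{k}-Y_{k}^{(n)}$; both summands are centred because $\E[X_{k}]=0$. Bilinearity of covariance then splits $\Cov(X_{k},X_{k+n})$ into four pieces. For the fully truncated piece, the standard covariance inequality for bounded random variables, $|\Cov(U,V)|\le 4\|U\|_{\infty}\|V\|_{\infty}\alpha(\sigma(U),\sigma(V))$, combined with $\|Y_{k}^{(n)}\|_{\infty}\le 2T_{n}$ and the exponential mixing assumption \eqref{ALPHA_EXP_DECAY} yields
\[
  |\Cov(Y_{k}^{(n)},Y_{k+n}^{(n)})|\le 16\bar\alpha_{0}T_{n}^{2}e^{-\bar\alpha_{1}n}=16\bar\alpha_{0}e^{-\bar\alpha_{1}n/2},
\]
whose sum over $n\ge 1$ is finite and depends only on $\bar\alpha_{0}$ and $\bar\alpha_{1}$.

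For each of the remaining three pieces, which contain at least one tail factor $Z_{\cdot}^{(n)}$, I would apply Cauchy--Schwarz together with the elementary estimate $\|Z_{k}^{(n)}\|_{2}\le 2\,\E[X_{k}^{2}\1_{|X_{k}|>T_{n}}]^{1/2}$. The logarithmic weight enables a Chebyshev-type bound
\[
  \E[X_{k}^{2}\1_{|X_{k}|>T_{n}}]\le\frac{A_{\varepsilon}}{\log^{2(1+\varepsilon)}(T_{n}^{2})}=\frac{A_{\varepsilon}}{(\bar\alpha_{1}n/2)^{2(1+\varepsilon)}},
\]
so that each such covariance is at most a constant multiple of $A_{\varepsilon}n^{-2(1+\varepsilon)}$, and the corresponding series converges for every $\varepsilon>0$. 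Adding the bounded contribution and the three tail contributions gives $\sup_{k}\sum_{n\ge1}|\Cov(X_{k},X_{k+n})|\le C''A_{\varepsilon}$, which is \eqref{eq:boundV2}. The only real obstacle is the bookkeeping of balancing the truncation level against the mixing decay: the exponential rate $\alpha(n)\lesssim e^{-\bar\alpha_{1}n}$ tolerates any choice $T_{n}=e^{cn}$ with $c\in(0,\bar\alpha_{1}/2)$, whereas a merely polynomial mixing rate would force a polynomial moment in place of the logarithmic one.
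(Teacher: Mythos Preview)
The paper itself gives no proof of this proposition: the Bernstein-type inequality is attributed to \citet{MerlevedeEtAl2009} and the bound \eqref{eq:boundV2} on $v^{2}$ is attributed to \citet{belomestny2011}, both without argument. So there is no ``paper's own proof'' against which to compare; your truncation approach is the standard one and is essentially correct.

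One minor slip: for the two \emph{mixed} covariance pieces $\Cov(Y_{k}^{(n)},Z_{k+n}^{(n)})$ and $\Cov(Z_{k}^{(n)},Y_{k+n}^{(n)})$, Cauchy--Schwarz supplies only one tail factor, so the bound is of order $\sqrt{\E[X_{k}^{2}]\,A_{\varepsilon}}\,n^{-(1+\varepsilon)}$ rather than $A_{\varepsilon}n^{-2(1+\varepsilon)}$ as you write. This does not harm the conclusion: $\sum_{n\ge1}n^{-(1+\varepsilon)}$ still converges, and $\sqrt{\E[X_{k}^{2}]\,A_{\varepsilon}}\le\tfrac12(\E[X_{k}^{2}]+A_{\varepsilon})$ is absorbed in the right-hand side of \eqref{eq:boundV2} (where the term $\E[X_{k}]^{2}$ is evidently a typo for $\E[X_{k}^{2}]$, the variables being centred).
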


Let $Z_{j}$, $j=1,\ldots,n,$ be a sequence of random vectors in $\mathbb{R}^{d}$ with corresponding empirical characteristic function
\begin{align*}
\phi_{n}(u)=\frac{1}{n}\sum_{j=1}^{n}\exp(i\langle u,Z_{j}\rangle),\quad u\in\R^d.
\end{align*}
\begin{thm}\label{thm:concentration} 
  Suppose that the following assumptions hold: \begin{enumerate} 

  \item[(AZ1)] The sequence $Z_{j}, j=1,\ldots,n$, is strictly stationary and $\alpha$-mixing with mixing coefficients $(\alpha_{Z}(k))_{k\in\mathbb{N}}$ satisfying 
  \begin{eqnarray*}
  \alpha_{Z}(k)\le\bar{\alpha}_{0}\exp(-\bar{\alpha}_{1}k),\quad k\in\mathbb{N},
  \end{eqnarray*}
  for some $\bar{\alpha}_{0}>0$ and $\bar{\alpha}_{1}>0.$
  
  \item[(AZ2)] It holds $\E[|Z_{j}|^{p}]<\infty$ for some $p>2$.
  \end{enumerate} 
  For arbitrary $\delta>0$ let the weighting function $w\colon\R^{d}\to\R_{+}$ be given by 
  \begin{equation}
  w(u)=\log^{-(1+\delta)/2}(e+|u|),\quad u\in\mathbb{R}.\label{w}
  \end{equation}
  Then there are $\underline{\xi},\overline{\xi}>0$ depending only
  on the characteristics of $Z$ and $\delta$, such that for any $n\in\N$
  and for all $\xi\in(\underline{\xi}\sqrt{d\log n},\overline{\xi}\sqrt{n}/\log^{2}n)$
  the inequality 
  \begin{align}
  \P\Big(\sqrt{n}\sup_{u\in\mathbb{R}^{d}}w(u)|\phi_{n}(u)-\E[\phi_{n}(u)]|>\xi\Big) & \le C(e^{-c\xi^{2}}+n^{-p/2})\label{MINEQ}
  \end{align}
  holds for constants $C,c>0$ independent of $\xi,n$ and $d$. 
\end{thm}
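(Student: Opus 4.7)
The plan combines truncation of the $Z_j$, a dyadic peeling in $|u|$, and Proposition~\ref{EIB} applied at each scale on a deterministic net.

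\textbf{Truncation.} Set $T_n := n^{1/2+1/p}$ and $Y_j := Z_j \1_{\{|Z_j| \le T_n\}}$. Markov and a union bound yield $\P(\exists j \le n : Y_j \neq Z_j) \le n\E[|Z_1|^p]/T_n^p = O(n^{-p/2})$, which accounts for the second additive term in \eqref{MINEQ}. On the complementary event $\phi_n^Z \equiv \phi_n^Y$ and $\|\phi_Z - \phi_Y\|_\infty = O(n^{-1-p/2})$, so it is enough to concentrate $\sqrt n \sup_u w(u)|\phi_n^Y(u) - \phi_Y(u)|$. Since $(Y_j)$ is a coordinatewise function of $(Z_j)$, it inherits the strict stationarity and exponential $\alpha$-mixing of $(Z_j)$.

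\textbf{Peeling and netting.} Introduce dyadic shells $S_0 := \{|u| \le 1\}$ and $S_k := \{2^{k-1} < |u| \le 2^k\}$ for $k \ge 1$, on which $w(u) \lesssim k^{-(1+\delta)/2}$. The trivial bound $\|\phi_n^Y - \phi_Y\|_\infty \le 2$ shows that the shells with $k > K_n$, where $K_n$ is the first integer with $2\sqrt n\, w(2^{K_n - 1}) \le \xi/2$, contribute nothing to the event in \eqref{MINEQ}, and one computes $K_n \lesssim (n/\xi^2)^{1/(1+\delta)}$. For $k \le K_n$, cover $S_k$ with a deterministic $\epsilon$-net $\mathcal N_k$ of cardinality $|\mathcal N_k| \le (C 2^k/\epsilon)^d$. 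The bound $|Y_j| \le T_n$ makes $u \mapsto w(u)\sqrt n (\phi_n^Y(u) - \phi_Y(u))$ Lipschitz with constant $\le 4\sqrt n\, T_n$, so the choice $\epsilon := \xi/(8\sqrt n\, T_n)$ controls the oscillation across each cell by $\xi/2$.

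\textbf{Pointwise Bernstein and union bound.} At each grid point $u_\ell \in \mathcal N_k$, apply Proposition~\ref{EIB} separately to the real and imaginary parts of $\sum_j (e^{i\langle u_\ell, Y_j\rangle} - \E[e^{i\langle u_\ell, Y_j\rangle}])$; both are centred, bounded by $2$, $\alpha$-mixing, and $v^2$ is a universal constant by \eqref{eq:boundV2}. With $\zeta := \sqrt n\, \xi/(c_0 w(u_\ell))$, the restriction $\xi \le \overline\xi \sqrt n/\log^2 n$ allows the term $M\zeta \log^2 n$ in the Bernstein denominator to be absorbed into $n v^2$, delivering a pointwise exponent of order at least $c\xi^2/w(u_\ell)^2 \ge c'\xi^2 k^{1+\delta}$ on $S_k$. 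The resulting union bound reads
\begin{equation*}
  \sum_{k=0}^{K_n} |\mathcal N_k| \cdot 2\exp(-c'\xi^2 k^{1+\delta}) \le \sum_{k=0}^{K_n} \exp\bigl(dk\log 2 + O(d\log n) - c'\xi^2 k^{1+\delta}\bigr),
\end{equation*}
whose $k=0$ term is $\le e^{-c''\xi^2}$ exactly when $\xi \ge \underline\xi \sqrt{d\log n}$ with $\underline\xi$ large enough (this is the role of the lower bound on $\xi$), while its $k \ge 1$ contributions sum geometrically to $C e^{-c''\xi^2}$ because $\delta > 0$ forces $\xi^2 k^{1+\delta}$ to dominate the linear-in-$k$ grid cost $dk$. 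Combining with the $O(n^{-p/2})$ truncation error yields \eqref{MINEQ}.

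\textbf{Main obstacle.} The principal difficulty is Step~3: to ensure that the Bernstein exponent genuinely inherits the $w(u_\ell)^{-2}$ scaling across all shells rather than degrading to a bare $\xi^2$, since with $K_n$ possibly polynomial in $n$, the $d$-dimensional grid cost $2^{dk}$ would otherwise defeat the union bound. The quantitative upper bound $\xi \le \overline\xi \sqrt n/\log^2 n$ and the exponent $(1+\delta)/2$ in the weight are precisely what calibrate the absorption of the linear-in-$\zeta$ term and the subsequent geometric summability to hold uniformly over the admissible range of $\xi$.
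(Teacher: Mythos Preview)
Your proof follows the same architecture as the paper's: peeling in $|u|$, covering each shell by a finite net, applying the mixing Bernstein inequality (Proposition~\ref{EIB}) at every grid point, and summing via a union bound. The one genuine methodological difference is how the oscillation between grid points is handled. You truncate the $Z_j$ at level $T_n=n^{1/2+1/p}$ so that $u\mapsto e^{i\langle u,Y_j\rangle}$ acquires a deterministic Lipschitz constant, and the truncation probability supplies the $n^{-p/2}$ term. The paper instead keeps the untruncated $Z_j$, bounds the random Lipschitz constant by $2L_w+\frac{1}{n}\sum_j|Z_j|+\E|Z_1|$, and controls the deviation of the empirical mean of $|Z_j|$ via Yokoyama's moment inequality for $\alpha$-mixing partial sums; the $n^{-p/2}$ contribution again comes from this continuity step. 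Your route is slightly more elementary in that it avoids invoking an additional mixing moment inequality, at the cost of introducing the auxiliary truncated sequence; both lead to the same bound.

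One small imprecision worth flagging: your claim that $M\zeta\log^2 n$ can be absorbed into $nv^2$ uniformly over $k\le K_n$ is not literally true, since at the outermost shells $k\sim K_n$ one has $\zeta=\sqrt n\,\xi/w(u_\ell)\sim n$ and the linear term is of order $n\log^2 n$. This does not damage the argument, however: once Bernstein enters its linear regime the exponent is of order $\zeta/\log^2 n\gtrsim n/\log^4 n$, which overwhelms the grid cost $dk+O(d\log n)$ for all $k\le K_n$ in the admissible range of $\xi$, so the sum still telescopes to $Ce^{-c\xi^2}$. The paper's proof passes over the same transition with comparable informality.
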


\begin{proof} We introduce the empirical process 
\begin{eqnarray*}
\mathcal{W}_{n}(u)=\frac{1}{n}\sum_{j=1}^{n}w(u)\Big(\exp(i\langle u,Z_{j}\rangle)-\E[\exp(i\langle u,Z_{j}\rangle)]\Big),\quad u\in\R^{d}.
\end{eqnarray*}
Consider the sequence $A_{k}=e^{k},\, k\in\mathbb{N}$. As discussed in the proof of Proposition~\ref{prop:uniformBound},
we can cover each ball $\{U\in\R^{d}:|u|<A_{k}\}$ with $M_{k}=\left(d^{1/2}A_{k})/\gamma\right)^{d}$ small balls with radius $\gamma>0$ and some centres $u_{k,1},\ldots,u_{k,M_{k}}$.

Since $|\mathcal{W}_{n}(u)|\le2w(u)\downarrow0$ as $|u|\to\infty$,
there is for any $\lambda>0$ some finite integer $K=K(\lambda)$
such that $\sup_{|u|>A_{k}}|\mathcal{W}_{n}(u)|<\lambda$. For $|u|\le A_{k}$
we use the bound 
\begin{align*}
\max_{k=1,\ldots,K}\sup_{A_{k-1}<|u|\le A_{k}}|\mathcal{W}_{n}(u)|\le & \max_{k=1,\ldots,K}\max_{|u_{k,m}|>A_{k-1}}|\mathcal{W}_{n}(u_{k,m})|\\
 & +\max_{k=1,\ldots,K}\max_{1\le m\le M_{k}}\sup_{u:|u-u_{k,m}|\le\gamma}|\mathcal{W}_{n}(u)-\mathcal{W}_{n}(u_{k,m})|
\end{align*}
to obtain 
\begin{align}
&\P\Big(\sup_{u\in\R^{d}}|\mathcal{W}_{n}(u)|>\lambda\Big)\notag\\
&\qquad\le\sum_{k=1}^{K}\sum_{|u_{k,m}|>A_{k-1}}\P(|\mathcal{W}_{n}(u_{k,m})|>\lambda/2)
+\P\Big(\sup_{|u-v|<\gamma}|\mathcal{W}_{n}(v)-\mathcal{W}_{n}(u)|>\lambda/2\Big).\label{DEC1}
\end{align}
It holds for any $u,v\in\mathbb{R}^{d}$
\begin{align}
  |\mathcal{W}_{n}(v)-\mathcal{W}_{n}(u)|\le & 2|w(v)-w(u)|+\frac{1}{n}\sum_{j=1}^{n}\left|\exp(i\langle v,Z_{j}\rangle)-\exp(i\langle u,Z_{j}\rangle)\right|\nonumber \\
  & +\frac{1}{n}\sum_{j=1}^{n}\left|\E\left[\exp(i\langle v,Z_{j}\rangle)-\exp(i\langle u,Z_{j}\rangle)\right]\right|\nonumber \\
  \le & |u-v|\Big(2L_{w}+\frac{1}{n}\sum_{j=1}^{n}|Z_{j}|+\frac{1}{n}\sum_{j=1}^{n}\E[|Z_{j}|]\Big),\label{WNDIFF}
\end{align}
where $L_{\omega}$ is the Lipschitz constant of $w$. Markov's inequality and the moment inequality by \citet{yokoyama1980}
yield 
\begin{align*}
\P\Big(\frac{1}{n}\sum_{j=1}^{n}(|Z_{j}|-\E[|Z_{j}|])>c\Big)\le & c^{-p}n^{-p}\E\Big[\Big|\sum_{j=1}^{n}(|Z_{j}|-\E[|Z_{j}|])\Big|^{p}\Big]\\
\le & C_{p}(\alpha)c^{-p}n^{-p/2}
\end{align*}
for any \ensuremath{c>0}
 and where \ensuremath{C_{p}(\alpha)}
 is some constant depending on \ensuremath{p}
 and \ensuremath{\alpha=(\bar{\alpha}_{0},\bar{\alpha}_{1})}
 from Assumption (AZ1). In combination with \eqref{WNDIFF} we
obtain 
\begin{align*}
\P\Big(\sup_{|u-v|<\gamma}|\mathcal{W}_{n}(v)-\mathcal{W}_{n}(u)|>\lambda/2\Big)\le & \P\Big(\frac{1}{n}\sum_{j=1}^{n}(|Z_{j}|-\E[|Z_{j}|])>\frac{\lambda}{2\gamma}-2(L_{w}+\E[|Z_{1}|])\Big)\notag\\
\le & C_{p}(\alpha)n^{-p/2}\Big(\frac{\lambda}{2\gamma}-2(L_{w}+\E[|Z_{1}|])\Big)^{-p}.
\end{align*}
Setting $\gamma=\lambda/(6(L_{\omega}+\E[|Z_{1}|]))$, we conclude 
\begin{equation*}
P\Big(\sup_{|u-v|<\gamma}|\mathcal{W}_{n}(v)-\mathcal{W}_{n}(u)|>\lambda/2\Big)\le B_{1}n^{-p/2}\label{LINEQ}
\end{equation*}
with some constant $B_{1}$ depending neither on $\lambda$ nor $n$.
 
We turn now to the first term on the right-hand side of \eqref{DEC1}.
If $|u_{k,m}|>A_{k-1}$, then it follows from Proposition~\ref{EIB} 
\begin{align*}
 & \P\big(\left|\Re(\mathcal{W}_{n}(u_{k,m}))\right|>\lambda/4\big)
 \le2\exp\Big(-\frac{B_{2}\lambda^{2}n}{w^{2}(A_{k-1})\log^{2(1+\delta)}(w(A_{k-1}))+\lambda\log^{2}(n)w(A_{k-1})}\Big),
\end{align*}
with some constant $B_{2}>0$ depending only on the characteristics of the process $Z$ and $\delta>0$. The same bound holds true for $\Im(\mathcal{W}_{n}(u_{k,m}))$. Choosing $\lambda=\xi n^{-1/2}$ for any $0<\xi\lesssim\sqrt{n}/\log^{2}(n)$
and taking into account the choice of $\gamma$ from above, we get
\begin{align*}
 \sum_{|u_{k,m}|>A_{k-1}}\P(|\mathcal{W}_{n}(u_{k,m})|>\lambda/2)
 &\le4\Big(\frac{d^{1/2}A_{k}}{\gamma}\Big)^{d}\exp\Big(-\frac{B_{3}\lambda^{2}n}{w^{2(1+\delta)}(A_{k-1})+\lambda\log^{2}(n)w(A_{k-1})}\Big)\\
 &\lesssim d^{d/2}A_{k}^{d}n^{d/2}\xi^{-d}\exp\Big(-\frac{B\xi^{2}}{w^{2(1+\delta)}(A_{k-1})}\Big)
\end{align*}
with positive constants $B_{3},B$. Fix \ensuremath{\theta>0}
 such that \ensuremath{B\theta=2d}
 and compute 
\begin{align*}
\sum_{|u_{k,m}|>A_{k-1}}\P(\sqrt{n}|\mathcal{W}_{n}(u_{k,m})|>\xi/2)\lesssim & d^{d/2}\xi^{-d}e^{dk-\theta B(k-1)}n^{d/2}e^{-B(k-1)(\xi^{2}-\theta)}\\
\le & d^{d/2}e^{2d}\xi^{-d}e^{k(d-\theta B)}e^{-B(k-1)(\xi^{2}-\theta)+d\log(n)/2}.
\end{align*}
If $\xi^{2}>\theta$ we obtain for any $K>0$ 
\begin{align*}
\sum_{k=2}^{K}\sum_{|u_{k,m}|>A_{k-1}}\P(\sqrt n|\mathcal{W}_{n}(u_{k,m})|>\xi/2) & \lesssim(d^{1/2}e^{4})^{d}\xi^{-d}e^{-(B\xi^{2}-d\log(n)/2)}.
\end{align*}
On the interval $\xi\in(\underline{\xi}\sqrt{d\log n},\overline{\xi}\sqrt{n}/\log^{2}n)$
for appropriate $\underline{\xi},\overline{\xi}>0$, we thus get \eqref{MINEQ}. \end{proof}

 \bibliographystyle{apalike}
\bibliography{reference-2-1}

\end{document}